\newtheorem{lemma}[subsubsection]{Lemma}
\newtheorem{thm}[subsubsection]{Theorem}
\newtheorem{prop}[subsubsection]{Proposition}
\newtheorem{rem}[subsubsection]{Remark}
\newtheorem{coro}[subsubsection]{Corollary}
\newtheorem{example}[subsubsection]{Example}
\newcommand{\ra}{\rightarrow}
\newcommand{\mhu}{M_X^H(u)}
\newcommand{\mhc}{M_X^H(c)}
\newcommand{\lu}{\lambda_u}
\newcommand{\lc}{\lambda_c}
\newcommand{\lcn}{\lambda_{c^r_n}}
\newcommand{\mo}{\mathcal{O}}
\newcommand{\mf}{\mathcal{F}}
\newcommand{\mg}{\mathcal{G}}
\newcommand{\ma}{\mathcal{A}}
\newcommand{\mb}{\mathcal{B}}
\newcommand{\me}{\mathcal{E}}
\newcommand{\mc}{\mathcal{C}}
\newcommand{\mk}{\mathcal{K}}
\newcommand{\z}{\Theta}
\newcommand{\hk}{\textbf{K}}
\newcommand{\ha}{\textbf{A}}
\newcommand{\omu}{\Omega_u}
\newcommand{\ls}{|L|}
\newcommand{\pone}{\mathbb{P}^1}
\begin{document}
\fontsize{12pt}{14pt} \textwidth=14cm \textheight=21 cm
\numberwithin{equation}{section}

\title{Determinant line bundles on Moduli spaces of pure sheaves on rational surfaces and Strange Duality}
\author{Yao YUAN}
\date{\small\textsc SISSA, Via Bonomea 265, 34136, Trieste, ITALY
\\ yuanyao@sissa.it}
\maketitle
\begin{flushleft}{\textbf{Abstract:}}
Let $\mhu$ be the moduli space of semi-stable pure sheaves of class $u$ on a smooth complex projective surface $X$.  We specify $u=(0,L,\chi(u)=0),$  i.e. sheaves in $u$ are of dimension $1$.   
There is a natural morphism $\pi$ from the moduli space $\mhu$ to the linear system $\ls$.
We study a series of determinant line bundles $\lcn$ on $\mhu$ via $\pi.$ 
Denote $g_L$ the arithmetic genus of curves in $\ls.$
For any $X$ and $g_L\leq0$,  we compute the generating function $Z^r(t)=\sum_{n}h^0(\mhu,\lcn)t^n$.   For $X$ being $\mathbb{P}^2$ or $\mathbb{P}(\mo_{\pone}\oplus\mo_{\pone}(-e))$ with $e=0,1$,  we compute $Z^1(t)$ for $g_L>0$ and $Z^r(t)$ for all $r$ and $g_L=1,2$.  Our results provide a numerical check to Strange Duality in these specified situations,  together with G\"ottsche's computation.  And in addition,  we get an interesting corollary (Corollary \ref{trsame}) in the theory of compactified Jacobian of integral curves.  \end{flushleft}
\section{Introduction}
It is an interesting problem in its own right to determine the generating function of the sections of determinant line bundles on moduli space of pure sheaves of dimension 1.  But an additional motivation of our work comes from the so-called Strange Duality conjecture due to Le Potier. 

Let $X$ be any projective scheme of dimension $d$ over an algebraically closed field of characteristic zero,  and $H$ the very ample divisor on it.  Let $\hk(X)$ be the Grothendieck group of $X$.  For any two elements $u,c\in\hk(X)$,  we say they are orthogonal to each other,  i.e. $u\perp c,$  if $\chi(u\otimes c)=0$ with $\chi$ being the holomorphic Euler characteristic.   For any $u\in \hk(X)$,  there is a projective scheme $\mhu$ corepresenting the functor of semi-stable sheaves with respect to $H$ in $u$.  Let $h=[\mo_H].$  Given any other element $c\in\hk(X)$ such that 
\begin{equation}\label{bot}c\in u^{\bot}\cap (h,h^2,\ldots,h^d)^{\bot\bot},\end{equation}
 we then can associate to $c$ a well-defined determinant line bundle $\lc.$  

If $X$ is a smooth complex projective curve,  then given any two elements $c,u\in\hk(X)$ satisfying (\ref{bot}),  the locus in $\mhu\times\mhc$ 
\begin{equation}\label{divisor}D_{\lambda}=\{(E,F)\in\mhu\times\mhc~s.t.~h^0(E\otimes F)\neq0\}\end{equation}
is a divisor of the line bundle $\lu\boxtimes\lc$ on $\mhu\times\mhc$ and induces a morphism $D$ well-defined up to scalars,
\begin{equation}\label{sdm}D:H^0(\mhu,\lc)^{\vee}\ra H^0(\mhc,\lu).\end{equation}
The Strange Duality conjecture for curves due to Beauville and Donagi-Tu says that the map $D$ in (\ref{sdm}) is an isomorphism.  This conjecture has been studied by many people and has recently been proven.  (For generic curves:  Prakash Belkale in 2006;  for all curves:  Alina Marian,  Dragos Oprea in 2007,  and also Prakash Belkale in 2009.)

Now let $X$ be a smooth complex projective surface.  In general the locus $D_{\lambda}$ in $(\ref{divisor})$ might not be a divisor in $\mhu\times\mhc$.  However,  when $D_{\lambda}$ is a divisor,  is the induced morphism $D$ an isomorphism?  This is the question proposed by Strange Duality conjecture for surfaces and so far only few special cases are known.   For instance Danila proves that Strange Duality holds for $u=(0,dH,\chi(u)=0),c=(2,0,c_2)$ on $\mathbb{P}^2$ for small $c_2$ and $d=1,2,3$ (see \cite{nila});
and Marian and Oprea prove that it holds in a large number of cases for generic K3 surfaces (see \cite{mosec}).

Strange Duality also proposes a numerical question,  namely,  whether the following equality holds
\begin{equation}\label{numcon}h^0(\mhu,\lc)=h^0(\mhc,\lu)?
\end{equation}

There is another version of Strange Duality on the numerical level.  Instead of the same dimension of the spaces of global sections,  we ask whether the two line bundles have the same Euler characteristic.  In other words,  whether the following equality holds
\begin{equation}\label{ecnum}\chi(\mhu,\lc)=\chi(\mhc,\lu)?
\end{equation}

Some people think that Strange Duality conjecture might be too "good" to be always true and Equation (\ref{ecnum}) seems more reasonable than Equation (\ref{numcon}).  In fact in all the cases that are known so far,  both $\lc$ and $\lu$ have no higher cohomology.  Hence in those cases $(\ref{numcon})$ and $(\ref{ecnum})$ are both true. 

In this paper we let $X$ be a smooth complex projective surface,  and most of the time $X=\mathbb{P}^2$ or $\mathbb{P}(\mo_{\pone}\oplus\mo_{\pone}(-e))$ with $e=0,1$.  And we let $u$ and $c$ be specified as $u=(0,L,\chi(u)=0)$ for $L$ an effective line bundle on $X$ satisfying some conditions (see Subsection 4.2),  
and $c=c_n^r=r[\mathcal{O}_X]-n[\mathcal{O}_{pt}]$ where $\mo_{pt}$ is the skyscraper sheaf supported at a point in $X.$  In this situation,  the locus $D_{\lambda}$ in (\ref{divisor}) is a divisor in $\mhu\times\mhc$ (see \cite{nila} Theorem 2.1).  Hence we have a morphism $D$ well-defined up to scalars as in (\ref{sdm}). 
According to Strange Duality,  the morphism $D$ is conjectured to be an isomorphism.

We are concerned mainly with the numerical version of Strange Duality.  We would like to check (\ref{numcon}) and (\ref{ecnum}) for our specified $u$ and $c=c^r_n.$

For $r=1,$  $\chi(M_X^H(c^r_n),\lu)$ equals to $h^0(M_X^H(c^r_n),\lu)$ and has been computed in \cite{adv}.  In this paper we get the first equality in this case and moreover the morphism $D$ in (\ref{sdm}) is an isomorphism.

For $r=2$,  on the right hand side of both equations,  it corresponds to studying the Donaldson line bundle on $M_X^H(c^r_n).$  G\"ottsche has computed $\chi(M_X^H(2,c_1,c_2),\lambda_L),$  for $X$ $\mathbb{P}^2$ or any Hirzebruch surface.  And given $n$ large enough,  the higher cohomology groups of $\lambda_L$ will vanish.  

In this paper for $X=\mathbb{P}^2$ or $\mathbb{P}(\mo_{\pone}\oplus\mo_{\pone}(-e))$ with $e=0,1$,  we also compute the generating function 
\begin{equation}\label{generating}Z^r(t)=\sum_{n\geq0}h^0(\mhu,\lcn)t^n.\end{equation}
for any $r\geq 1$ and $g_L\leq2$ with $g_L$ the arithmetic genus of curves in $\ls$.  
We also show that $\lcn$ has no higher cohomologies and hence the results as $r=2$ provide a check for the equality (\ref{ecnum}).  And moreover the equality in (\ref{numcon}) holds for $n$ big enough.

The structure of the paper is as follows.  In section 2,  we briefly review how to define the determinant line bundles.  In section 3,  we give some basic properties of the moduli space $\mhu$ and the line bundle $\lcn$.  In section 4,  we compute the generating function (\ref{generating}).   We divide section 4 into several subsections,  we deal with the case $g_L\leq0$ in the first subsection.  Then we specify $X$ to be $\mathbb{P}^2$ or $\mathbb{P}(\mo_{\pone}\oplus\mo_{\pone}(-e))$ with $e=0,1$ and in the second subsection we prove the result when $r=1$ and also we get an interesting corollary in compactified Jacobian theory of integral planar curves.   Finally the cases as $r\geq2$ and $g_L=1,2$ are studied in the last two subsections.

\section{Preliminaries}
For any $X$ projective scheme of dimension $d$ over an algebraically closed field $\emph{k}$ of characteristic zero,  with $H$ the very ample divisor on it and $u\in \hk(X),$  one can construct a moduli space $\mhu$ corepresenting the functor of semi-stable sheaves with respect to $H$ of class $u$,  as a categorical quotient of some open subscheme $\omu$ in a $\textbf{Quot}-$scheme by a reductive group.  (see \cite{dan} chap. $4$.)  

\begin{equation}\label{quot}
\xymatrix{
\omu\ar[r]^{\phi_u}&\mhu,}
\end{equation}
where $\phi_u$ is a categorical quotient.

Let $c$ be another element in $\hk(X)$ satisfying (\ref{bot}),  then there is a well-defined so-called determinant line bundle $\lc$ on $\mhu$ obtained by descending a line bundle $\tilde{\lc}$ on $\omu.$  $\tilde{\lc}$ is defined as the image of $c$ through $\tilde{\lambda}$ which is the composition of the following homomorphisms:
\begin{equation}\label{dlb}
\xymatrix@C=0.3cm{
  \hk_0(X) \ar[rr]^{q^{*}} && \hk^0(\omu\times X) \ar[rr]^{.[\me]} && \hk^0(\omu\times
X) \ar[rr]^{~~~p_{!}} && \hk^0(\omu)\ar[rr]^{det^{-1}} &&
\textbf{Pic}(\omu),}\end{equation}
where $[\mf].[\mg]=\sum_p (-1)^pTor^p(\mf,\mg)$,
$p_{!}([\mf])=\sum_i(-1)^i[R^ip_{*}\mf];$  and $\me$ is a universal sheaf on $\omu \times X.$  Although the universal sheaf is not unique,  it  won't cause ambiguity because of (\ref{bot}). (also see \cite{dan} chap. $8$.)  

Notice that when $X$ is a simply connected surface,  i.e. $H^1(\mo_X)=0,$  both the moduli spaces and the line bundles only depend on the images of $u$ and $c$ in $\hk(X)_{num}.$  Here $\hk(X)_{num}$ is the Grothendieck group modulo numerical equivalence.
\section{Determinant line bundles $\lcn$ on $\mhu$}
Let $X$ be a smooth complex projective surface,  and $u=(0, L, \chi(u)=0),$  where $L$ is an effective line bundle on $X$ and $\chi(u)$ is the Euler characteristic.   Let $c^r_n=r[\mo_X]-n[\mo_{pt}]$ with $r\geq1$ with $\mo_{pt}$ the skyscraper sheaf supported at a point in $X.$  Then we have $u\perp c^r_n$ for all $r$ and $n.$  One sees that the map $\tilde{\lambda}$ defined in (\ref{dlb}) is a group homomorphism.  So we have $\lcn\simeq \z^{\otimes r}\otimes \lambda_{pt}^{\otimes -n},$  where $\z$ and $\lambda_{pt}$ are line bundles obtained by descending $\tilde{\lambda}(\mo_X)$ and $\tilde{\lambda}(\mo_{pt})$ on $\omu$ respectively.   We have the following lemma for the determinant line bundles on $\mhu.$

\begin{lemma}\label{ngs}For any $c\perp u$ with $c$ of positive rank,  choose a representative torsion-free sheaf $\mg\in c$,  then 

1, $Tor^i(\mf,\mg)=0$ for all $i>0$ and $[\mf]\in\mhu;$

2, there is a natural global section of $\lambda_c$ whose zero set consists of all points $[\mf]$ that $h^0(\mg\otimes\mf)\neq0.$ 

Moreover $\z$ has a natural global section whose zero set consists of all the points $[\mf]$ that $h^0(\mf)\neq0.$  
\end{lemma}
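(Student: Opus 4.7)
The plan is to use Part 1 to make sense of the derived tensor product needed in Part 2, and then invoke the classical determinantal construction for the section. For Part 1, I would reduce to a local statement at each closed point $x\in X$. Since $X$ is a smooth surface, $R := \mo_{X,x}$ is a $2$-dimensional regular local ring. Any $\mf$ with $[\mf]\in\mhu$ is pure of dimension $1$ (semistability of a $1$-dimensional sheaf forces purity), so the stalk $\mf_x$ is Cohen--Macaulay of depth $1$, and Auslander--Buchsbaum gives $\text{pd}(\mf_x) = 1$. Choosing locally a two-term free resolution $0 \to R^{p} \to R^{q} \to \mf_x \to 0$ and tensoring with $\mg_x$, one immediately gets $Tor^{i}(\mf,\mg)=0$ for $i\geq 2$, and exhibits $Tor^{1}(\mf,\mg)_x$ as a submodule of $\mg_x^{p}$. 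Since $\mg$ is torsion-free on the integral surface $X$, so is $Tor^{1}(\mf,\mg)$. But $Tor^{1}(\mf,\mg)$ is supported on $\text{Supp}(\mf)$, a proper closed subset of dimension $1$, and a torsion-free coherent sheaf on an integral scheme that vanishes at the generic point must itself be zero. Hence $Tor^{1}(\mf,\mg)=0$.

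For Part 2, the vanishing from Part 1 combined with flatness of the universal sheaf $\me$ over $\omu$ yields $\me \otimes^{L} q^{*}\mg = \me \otimes q^{*}\mg$. Since the fibers $\mf_{\omega}\otimes\mg$ are $1$-dimensional, $Rp_{*}(\me \otimes q^{*}\mg)$ is locally on $\omu$ represented by a two-term complex $[A \xrightarrow{\phi} B]$ of locally free sheaves; the orthogonality $c\perp u$ gives $\chi(\mf\otimes\mg)=0$, so $\text{rk}(A)=\text{rk}(B)$. The determinant $\det(\phi)$ is then a global section of $\det(B)\otimes\det(A)^{-1}$, which by the definition in (\ref{dlb}) is precisely $\tilde{\lambda}_{c}$. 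Its vanishing locus is the degeneracy locus of $\phi$, which at $\omega\in\omu$ consists of points where $h^{0}(\mf_{\omega} \otimes \mg)\neq 0$ (equivalently $h^{1}\neq 0$, since $\chi=0$). The whole construction is $GL$-equivariant, so this section descends to a section of $\lc$ on $\mhu$ with the prescribed zero locus.

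The ``moreover'' statement is obtained by specializing $c=[\mo_X]$: the sheaf $\mo_X$ is torsion-free of rank $1$, and $\chi(u\otimes\mo_X)=\chi(u)=0$ gives $c\perp u$, so Part 2 produces a section of $\z$ with zero locus $\{[\mf]\,:\,h^{0}(\mf)\neq 0\}$. The main obstacle is really the $Tor^{1}$-vanishing in Part 1: it relies on the happy coincidence that $\mf$ has projective dimension exactly $1$, forcing $Tor^{1}(\mf,\mg)$ to be simultaneously torsion-free and supported on a curve, which leaves no room for it to be nonzero.
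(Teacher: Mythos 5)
Your proposal is correct, but note that the paper does not actually prove Lemma \ref{ngs} itself: it simply cites \cite{nila}, Lemmas 2.3 and 2.4, so the comparison is really with the standard argument underlying that citation --- which is exactly what you have reconstructed. Your Part 1 (purity $\Rightarrow$ depth $1$ $\Rightarrow$ projective dimension $1$ by Auslander--Buchsbaum, hence $\mathrm{Tor}^{\geq 2}=0$, and $\mathrm{Tor}^1$ is simultaneously a subsheaf of a torsion-free sheaf and supported on a curve, hence zero) is sound; the paper in fact proves the global form of the key input, namely that a pure $1$-dimensional sheaf on a smooth surface admits a length-one locally free resolution, as Lemma \ref{llfr} in Appendix A, so you could equally well quote that and tensor the global resolution with $\mg$. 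Your Part 2 is the classical Knudsen--Mumford/Le Potier determinantal-section construction; the only point you gloss over is that the local two-term representatives $[A\xrightarrow{\phi}B]$ of $Rp_{*}(\me\otimes q^{*}\mg)$ and their determinant sections must be checked to glue canonically into a single global section of $\tilde{\lambda}_c$ (and that the fiberwise identification of $\ker(\phi\otimes k(\omega))$ with $H^0(\mf_\omega\otimes\mg)$ uses the base-change property of such a representative). These are standard facts about determinants of perfect complexes with $\chi=0$, so I would not call this a gap, but a complete write-up should say a word about it. The specialization $c=[\mo_X]$ for the ``moreover'' clause is exactly right.
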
 
\begin{proof}See \cite{nila}, Lemma $2.3$ and Lemma $2.4$.
\end{proof}

It is easy to define a set map $\pi$ from the moduli space $\mhu$ to the linear system $\ls,$  satisfying the condition $\textbf{C}$ as follows.

$\textbf{C}:$Generally when the sheaf $E$ is of rank $1$ at its support,  $\pi$ sends it to its support.  When $E$ is of higher rank at some component,  its image is a curve having nonreduced structure at that component and the multiplicity is equal to the rank.  

Since all sheaves in $\mhu$ have the same first Chern class,  $\pi$ is well-defined a priori as a set map.   We have the following proposition.
\begin{prop}\label{morph}$\pi:\mhu\ra\ls$ is a morphism. \end{prop}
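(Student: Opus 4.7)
The plan is to construct $\pi$ first at the level of the $\textbf{Quot}$-scheme $\omu$ as a morphism $\tilde\pi:\omu\ra\ls$, and then to descend it along the categorical quotient $\phi_u$ of (\ref{quot}) using the universal property of categorical quotients. The underlying idea is that $\pi$ should send $\mf$ to its scheme-theoretic (Fitting) support, which for a pure dimension-$1$ sheaf $\mf$ on the smooth surface $X$ is an effective divisor in $\ls$ with multiplicities equal to the generic rank of $\mf$ along each component of its support, i.e. exactly the divisor described in condition $\textbf{C}$.

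To put this in families, let $\me$ be the universal sheaf on $\omu\times X$, with $p,q$ denoting the projections to $\omu$ and $X$. By construction $\me$ is a quotient of $V\otimes q^{*}\mo_X(-m)$ for a suitable vector space $V$ and $m\gg0$. Each fibre $\me_t$ is pure of dimension $1$ on the smooth surface $X$, hence has projective dimension $1$ by Auslander--Buchsbaum. Consequently the kernel $\mk$ of $V\otimes q^{*}\mo_X(-m)\twoheadrightarrow\me$ is flat over $\omu$ with locally free fibres on $X$, and therefore locally free on $\omu\times X$. Taking the determinant of the injection $\mk\hookrightarrow V\otimes q^{*}\mo_X(-m)$ produces a canonical section of the line bundle
\begin{equation*}
\det\bigl(V\otimes q^{*}\mo_X(-m)\bigr)\otimes\det(\mk)^{-1}.
\end{equation*}
A fibrewise Chern class computation shows that this line bundle is of the form $q^{*}\mo_X(L)\otimes p^{*}M$ for some $M\in\textbf{Pic}(\omu)$, because on each fibre $\{t\}\times X$ the determinant of the resolution of $\me_t$ has first Chern class $c_1(\me_t)=L$. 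Hence the zero locus of the section is a flat family of effective divisors in $|L|$ over $\omu$, and by the universal property of the linear system this flat family is the same data as a morphism $\tilde\pi:\omu\ra\ls$.

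Finally, since the scheme-theoretic support depends only on the isomorphism class of a sheaf, the morphism $\tilde\pi$ satisfies $\tilde\pi\circ a=\tilde\pi\circ\mathrm{pr}_{\omu}$, where $a$ is the action of the reductive group whose quotient produces $\mhu$; at the level of $T$-points this just reflects the fact that two $T$-families in the same orbit are isomorphic and therefore have the same family of Fitting supports. By the universal property of the categorical quotient $\phi_u$, $\tilde\pi$ factors through a morphism $\pi:\mhu\ra\ls$, which by construction realizes the set map described by condition $\textbf{C}$.

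The step I expect to require the most care is matching the divisor cut out by the determinant section with the multiplicities prescribed in $\textbf{C}$. This is essentially the statement that the $0$-th Fitting ideal of $\me_t$ cuts out the scheme-theoretic support with multiplicity along each irreducible component of $\mathrm{supp}(\me_t)$ equal to the generic rank of $\me_t$ there; this is a local computation at the generic point of each component, where $\me_t$ becomes a free module of the appropriate rank over a discrete valuation ring. The other ingredients (existence of the length-two resolution on $\omu\times X$, the Chern class calculation, and the descent along $\phi_u$) are standard.
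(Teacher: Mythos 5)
Your proposal is correct and follows essentially the same route as the paper: build a length-one locally free resolution of the universal sheaf on $\omu\times X$ (you obtain it from the Quot-scheme presentation, the paper invokes its Lemma \ref{llfr}), take the determinant of the resulting map of locally free sheaves to get a section of a line bundle whose zero divisor is a flat family of curves in $|L|$, use the representability of the linear system to get $\tilde\pi:\omu\ra\ls$, and descend to $\mhu$ by invariance under the group action. Your added care about the Fitting-ideal multiplicities and the identification of the line bundle as $q^{*}\mo_X(L)\otimes p^{*}M$ fills in details the paper leaves implicit, but the argument is the same.
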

\begin{proof}We first show that there is a morphism $\tilde{\pi}:\omu\ra\ls$ satisfying condition $\textbf{C}$ above.  And because $\tilde{\pi}$ is invariant under the group action,  it factors through the quotient $\phi_u$ and gives the morphism $\pi.$

By Lemma \ref{llfr} in Appendix A,  we have a locally free resolution on $\omu\times X$ for the universal sheaf $\me.$
\[\xymatrix{0\ar[r]&\ma\ar[r]^{\alpha}&\mb\ar[r]&\me\ar[r]&0,}\]
where $\ma$ and $\mb$ are locally free sheaves.

Since $\me$ is a family of sheaves of dimension $1$,  the map $\alpha$ is locally given by a square matrix.  So we can define a section of the line bundle $(det \ma)^{-1}\otimes det\mb $ as $det~\alpha.$  The divisor given by this section defines a subscheme in $\omu\times X,$  which induces a morphism $\tilde{\pi}$ from $\omu$ to $\ls$ satisfying condition $\textbf{C},$  this is because $\ls$ represents the functor of curves in class $L$ in $X.$  Hence the proposition.
 \end{proof}
  
We call the image of $[\mf]\in\mhu$ through $\pi$ the \textbf{schematic support} of $\mf$.      
Since we have a morphism $\pi:\mhu\ra\ls$,  it is natural to study the moduli space via $\pi.$  Especially we have the following proposition due to Le Potier (\cite{le}, Proposition $2.8$).

\begin{prop}\label{jlp}If $\mo_{pt}$ is not supported at a base point of $\ls,$  then  $\lambda_{pt}\simeq
\pi^{*}\mo_{\ls}(-1).$
\end{prop}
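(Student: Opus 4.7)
The plan is to compute $\tilde{\lambda}(\mo_{pt})$ on $\omu$ by plugging a locally free resolution of the universal sheaf into the K-theoretic definition (\ref{dlb}), recognise the answer as the pullback under $\tilde{\pi}$ of a line bundle on $\ls$, and then descend through $\phi_u$ to $\mhu$.

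First I would reuse the locally free resolution $0\to\ma\to\mb\to\me\to0$ on $\omu\times X$ from the proof of Proposition \ref{morph}. Since $\ma$ and $\mb$ are locally free, no higher Tors arise, so
\[[q^{*}\mo_{pt}]\cdot[\me]\;=\;[q^{*}\mo_{pt}\otimes\mb]\,-\,[q^{*}\mo_{pt}\otimes\ma]\]
in $\hk^{0}(\omu\times X)$. Both summands are pushed forward from the slice $\omu\times\{pt\}$, which maps isomorphically to $\omu$ under $p$, so $R^{i}p_{*}$ vanishes for $i\geq1$ and $p_{!}([q^{*}\mo_{pt}]\cdot[\me])=[\mb_{pt}]-[\ma_{pt}]$ in $\hk^{0}(\omu)$, where $\ma_{pt},\mb_{pt}$ denote the restrictions to $\omu\times\{pt\}\cong\omu$. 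Applying $\det^{-1}$ then gives $\tilde{\lambda}(\mo_{pt})\cong\det\ma_{pt}\otimes(\det\mb_{pt})^{-1}$.

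Next I would identify the line bundle $(\det\ma)^{-1}\otimes\det\mb$ globally on $\omu\times X$. Recall from the proof of Proposition \ref{morph} that $\det\alpha$ is a section of this bundle whose zero locus is the relative schematic support of $\me$; by the universal property of $\ls$ this zero locus equals $(\tilde{\pi}\times\mathrm{id}_{X})^{-1}(\mathcal{D})$, where $\mathcal{D}\subset\ls\times X$ is the universal divisor. Since $\mo(\mathcal{D})\cong\mo_{\ls}(1)\boxtimes\mo_{X}(L)$, this produces
\[(\det\ma)^{-1}\otimes\det\mb\;\cong\;\tilde{\pi}^{*}\mo_{\ls}(1)\,\otimes\, q^{*}\mo_{X}(L)\]
on $\omu\times X$. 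Restricting to $\omu\times\{pt\}$ the second factor trivialises, leaving $(\det\ma_{pt})^{-1}\otimes\det\mb_{pt}\cong\tilde{\pi}^{*}\mo_{\ls}(1)$, and combining with the first step yields $\tilde{\lambda}(\mo_{pt})\cong\tilde{\pi}^{*}\mo_{\ls}(-1)$. As the whole construction is equivariant with respect to the reductive group action whose categorical quotient is $\phi_{u}$, the isomorphism descends to $\lambda_{pt}\cong\pi^{*}\mo_{\ls}(-1)$ on $\mhu$.

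The role of the hypothesis that $pt$ is not a base point is to guarantee that the curves through $pt$ actually form a hyperplane $H_{pt}\subset\ls$, so that the identification carries its intended geometric meaning: the Cartier divisor $\pi^{-1}(H_{pt})$ on $\mhu$ is a section of the dual $\lambda_{pt}^{-1}\cong\pi^{*}\mo_{\ls}(1)$. The main step to pin down carefully is the second one, the global identification of $\det\me$ with $\tilde{\pi}^{*}\mo_{\ls}(1)\otimes q^{*}\mo_{X}(L)$ via the universal property of the linear system; once that is in place, the K-theory bookkeeping and the equivariant descent through $\phi_{u}$ are formal.
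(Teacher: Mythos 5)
Your argument is correct, but it is worth noting that the paper does not actually prove this proposition at all: it is quoted from Le Potier (\cite{le}, Proposition 2.8), so your write-up is a self-contained verification rather than a reproduction of the paper's reasoning. The verification itself is sound and uses exactly the right ingredients already present in the paper: the length-one locally free resolution $0\to\ma\to\mb\to\me\to0$ from the proof of Proposition \ref{morph}, the fact that $p$ restricted to $\omu\times\{pt\}$ is an isomorphism (so $p_!$ reduces to restriction and $\det^{-1}$ gives $\det\ma_{pt}\otimes(\det\mb_{pt})^{-1}$), and the identification of $(\det\ma)^{-1}\otimes\det\mb$ with $\mo(\mathcal{C}^{\Omega})\cong\tilde{\pi}^{*}\mo_{\ls}(1)\otimes q^{*}\mo_X(L)$ via the universal divisor --- the same sequence $0\to q^{*}\mo_X(-L)\otimes p^{*}\mo_{\ls}(-1)\to\mo_{X\times\ls}\to\mo_{\mc}\to0$ that the paper uses repeatedly later. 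Two small points deserve more care. First, for the descent step it is not quite enough to say the construction is equivariant: you should either note that the isomorphism $\tilde{\lambda}(\mo_{pt})\cong\phi_u^{*}\pi^{*}\mo_{\ls}(-1)$ can be taken in $\textbf{Pic}^{G}(\omu)$, or invoke the injectivity of $\textbf{Pic}^{G}(\omu)\to\textbf{Pic}(\omu)$ (the paper uses exactly this device, via \cite{git}, in the proof of Proposition \ref{dualizing}). Second, your closing remark about the base-point hypothesis is slightly off: your computation of the line bundle never uses it, since restricting $\mo_{\ls}(1)\boxtimes\mo_X(L)$ to $\ls\times\{pt\}$ gives $\mo_{\ls}(1)$ whether or not $pt$ is a base point; the hypothesis matters for the geometric statement that $\pi^{-1}(H_{pt})$ is an honest divisor cutting out a section of $\lambda_{pt}^{-1}$, not for the isomorphism class itself. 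Neither point is a genuine gap.
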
 
Let $c_n^r=r[\mo_X]-n[\mo_{pt}]$ with the skyscraper sheaf $[\mo_{pt}]$ generic so that it is not supported at any base point of $\ls.$  We then have $\lcn\simeq\z^r\otimes\pi^{*}\mo_{\ls}(n).$  From now on we write $\z^r\otimes\pi^{*}\mo_{\ls}(n)$ as $\z^r(n)$ for short.

\begin{prop}\label{ample}$\z^r(s)$ is ample on $\mhu$ for any positive $r$ and $s\gg0.$
\end{prop}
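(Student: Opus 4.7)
The plan is to combine one ampleness input coming from the GIT construction of $\mhu$ with the elementary fact that $\pi^{*}\mo_{\ls}(k)$ is nef on $\mhu$ for every $k\geq 0$, being the pullback along $\pi:\mhu\to\ls$ of a nef (in fact ample) line bundle on $\ls$.

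First, I would appeal to the standard GIT construction of $\mhu$ as a categorical quotient of $\omu$ (cf.\ \cite{dan}, chap.\ $8$): the canonical linearization on $\omu$ is a determinant line bundle of the form $\tilde{\lambda}\bigl(r_{0}[\mo_{X}]-n_{0}[\mo_{pt}]\bigr)$ for suitable positive integers $r_{0},n_{0}$, and it descends to an ample line bundle $\lambda_{c^{r_{0}}_{n_{0}}}$ on $\mhu$. By Proposition \ref{jlp} this ample line bundle is precisely $A:=\z^{r_{0}}(n_{0})$.

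Then, given any $r\geq 1$ and any $s$ with $sr_{0}\geq rn_{0}$, one computes
\[(\z^{r}(s))^{\otimes r_{0}}\;=\;\z^{rr_{0}}\otimes\pi^{*}\mo_{\ls}(sr_{0})\;=\;A^{\otimes r}\otimes\pi^{*}\mo_{\ls}(sr_{0}-rn_{0}).\]
The right-hand side is the tensor product of the ample line bundle $A^{\otimes r}$ with the nef line bundle $\pi^{*}\mo_{\ls}(sr_{0}-rn_{0})$, hence is ample. Since a line bundle whose positive tensor power is ample is itself ample, we conclude that $\z^{r}(s)$ is ample for $s\gg 0$.

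The main obstacle is the first step: to be sure that the GIT polarization on $\mhu$ can indeed be expressed in the form $\z^{r_{0}}(n_{0})$ for positive $r_{0}$ and $n_{0}$. This reduces to unwinding the construction in \cite{dan}, chap.\ $8$, using that every determinant line bundle $\lambda_{c^{r}_{n}}$ on $\mhu$ equals $\z^{r}\otimes\lambda_{pt}^{-n}\simeq\z^{r}(n)$ by Proposition \ref{jlp}; once this identification is made, the rest of the argument is a formal manipulation with ample and nef line bundles.
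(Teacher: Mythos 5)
Your proposal is correct and follows essentially the same route as the paper: both arguments produce a single ample determinant line bundle of the form $\z^{r_0}(n_0)$ with $r_0,n_0>0$ from the construction of the moduli space, and then combine it with the nefness of $\pi^{*}\mo_{\ls}(1)$ and the formal fact that (ample)$\otimes$(nef) is ample to conclude that $\z^{r}(s)$ is ample for $s\gg0$. The only difference is the source of the ample seed: the paper invokes Propositions 7.6 and 7.7 of \cite{new}, which give ampleness of $\lambda_c$ for $c=P(n)[\mo_X(mH)]-P(m)[\mo_X(nH)]=(m-n)H.L\,[\mo_X]-\tfrac12 H.L\,nm(m-n)\,[\mo_{pt}]$, i.e.\ an explicit $\z^{r_0}(n_0)$, rather than unwinding the GIT linearization from \cite{dan}, chap.\ 8, as you propose.
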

\begin{proof}Proposition 7.6 and Proposition 7.7 in \cite{new} imply that $\lc$ is ample on $\mhu$ for 
\[c=P(n)[\mo_X(mH)]-P(m)[\mo_X(nH)]=(m-n)H.L[\mo_X]-\frac12(H.Lnm(m-n))[\mo_{pt}],\]
where $P(n)=H.Ln$ is the Hilbert polynomial of sheaves in $u$ respect to a very ample line bundle $H$,  and $m>n\gg0.$  We see that $\lc\simeq\z^{(m-n)H.L}(\frac12(H.Lnm(m-n))).$  Moreover $\pi^{*}\mo_{\ls}(1)$ is nef and hence $\z^r(s)$ is ample for $s\gg0.$
\end{proof}
\section{Main results}

Fix $u$ and $c$ as in previous section,  and let $\mo_{pt}$ not be supported at the base points of $\ls,$  we consider the generating function for any fixed $r\geq 1:$ 
\[Z^r(t)=\sum_{n}h^0(\mhu,\lcn)t^n.\]
Because of Proposition \ref{jlp} above we can write $Z^r(t)$ as 
\[Z^r(t)=\sum_{n}h^0(\mhu,\z^r(n))t^n.\]

Since $h^0(\mhu, \lcn)=h^0(\ls,\pi_{*}\lcn),$  and $\pi_{*}\lcn\simeq \pi_{*}(\z^r)\otimes\mo_{\ls}(n),$  we have 
\[Z^r(t)=\sum_{n}h^0(\ls,\pi_{*}(\z^r)\otimes\mo_{\ls}(n))t^n.\]

Since $\mhu$ is projective,  $\pi_{*}(\z^r)$ is coherent on $\ls$ and hence this sum is bounded below,  i.e.  for every fixed $r$,  $\exists N^r_0\in \mathbb{Z},$  such that $h^0(\mhu,\lcn)=0,$  for all $n<N_0^r.$ 

For any two divisor classes $L$ and $L'$,  we write $L'\leq L$ if $L-L'$ is an effective class,  i.e. $h^0(L-L')\neq0;$  and write $L'<L$ if $L'\leq L$ and $L'\neq L.$  Let $g_L$ (resp. $g_{L'}$) be the arithmetic genus of curves in $\ls$ (resp. $|L'|$).  

\subsection{Non-positive genus cases}
In this subsecton we consider the case when $g_L\leq0.$  We have the following description for the moduli space $\mhu$ over $\ls$ and the determinant line bundles via the canonical morphism $\pi:\mhu\ra\ls.$

\begin{prop}\label{isomls}Let $\ls$ be a linear system satisfying the following condition:

For any $0<L'\leq L,$ we have $g_{L'}\leq0,$  i.e. every curve in $\ls$ does not contain any subscheme with positive genus;

Then $\mhu\simeq \ls$,  $\z\simeq \mo_{\ls},$  and every closed point $p$ in $\mhu$ corresponds to a class S-equivalent to $\mo_{\mathbb{P}^1}(-1)^{\oplus N_C} $ with $N_C$ the number of irreducible components counting the multiplicity if it is not reduced,  of the curve $[C]=\pi(p)\in\ls$.
\end{prop}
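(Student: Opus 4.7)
The plan is to analyse the morphism $\pi:\mhu\to\ls$ of Proposition \ref{morph} fibre by fibre and show each fibre consists of a single S-equivalence class. From this the three assertions will follow: the set-theoretic bijection $\pi$ will upgrade to a scheme-theoretic isomorphism, the explicit graded object describes the S-equivalence class, and $\z$ will be shown trivial via the natural section of Lemma \ref{ngs}.

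I would first pin down the stable sheaves that can appear as Jordan--Hölder factors of any semistable $\mf\in\pi^{-1}([C])$. Since $\chi(\mf)=0$ and stability is measured by the reduced Hilbert polynomial, every such factor $\mg$ has $\chi(\mg)=0$ and its schematic support has class $0<L'\leq L$. The hypothesis $g_{L'}\leq 0$ is then used to show that $\mg$ must be $\mo_{\pone}(-1)$ on some irreducible component of $C$, each of which is forced to be a $\pone$ since its class $L_i'$ satisfies $g_{L_i'}\leq 0$.

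The technical heart of the argument, which I expect to be the main obstacle, is the following claim: if $L'$ is not the class of a single reduced $\pone$-component, then $\mg$ cannot be stable. The idea is to exhibit, for any nontrivial effective decomposition $L'=L_1+L_2$ with $L_1,L_2>0$, a pure subsheaf $\mg_1\subset\mg$ whose schematic support has class $L_1$. A Riemann--Roch computation on the (possibly non-reduced) Cohen--Macaulay support, together with the bound $\chi(\mo_{L_1})=1-g_{L_1}\geq 1$, would force $\chi(\mg_1)/\deg(\mg_1)\geq 0$, violating the strict stability inequality against $\chi(\mg)/\deg(\mg)=0$. Hence $L'$ must be irreducible and reduced, so $L'=[C_i]$ for some $\pone$-component, and $\mg$ being a rank-one pure sheaf on $\pone$ with $\chi=0$ is forced to be $\mo_{\pone}(-1)$. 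Carrying out the Riemann--Roch bound carefully on non-reduced supports and choosing the correct pure subsheaf $\mg_1$ is where the bulk of the effort lies.

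Granting this classification, every $\mf\in\pi^{-1}([C])$ is S-equivalent to $\mo_{\pone}(-1)^{\oplus N_C}$, where $N_C$ counts the irreducible components of $C$ with multiplicity, so $\pi$ is bijective on closed points. Since $\ls$ is smooth and hence normal and $\pi$ is proper, Zariski's main theorem yields $\mhu\simeq\ls$. Finally, Lemma \ref{ngs} provides a canonical section of $\z$ vanishing exactly on $\{[\mf]:h^0(\mf)\neq 0\}$; applying $H^0$ along the Jordan--Hölder filtration and using $h^0(\mo_{\pone}(-1))=0$ inductively gives $h^0(\mf)=0$ throughout $\mhu$, so this section is nowhere zero and $\z\simeq\mo_{\ls}$.
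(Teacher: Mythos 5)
Your overall strategy (classify the Jordan--H\"older factors, deduce the fibrewise statement, then obtain $\mhu\simeq\ls$ and $\z\simeq\mo_{\ls}$ via Lemma \ref{ngs} and $h^0(\mo_{\pone}(-1))=0$) is compatible with the paper's, but the step you yourself single out as the technical heart contains a genuine gap, and the mechanism you propose for it does not work as stated. You claim that for \emph{any} nontrivial decomposition $L'=L_1+L_2$ one can produce a pure subsheaf $\mg_1\subset\mg$ supported on $L_1$ with $\chi(\mg_1)\geq0$, using only $\chi(\mo_{C_1})=1-g_{L_1}\geq1$. This is false in general: the natural subsheaf supported on the $L_1$-part is (up to torsion) the image of $\mg|_{C_1}$ twisted down by the intersection with the $L_2$-part, so its Euler characteristic drops by an amount governed by $L_1.L_2$, which the genus hypothesis does not bound by $1$ for an arbitrary decomposition. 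Concretely, for a chain of three rational curves $C_1+C_2+C_3$ and the decomposition isolating the middle component, $L_1.L_2=2$ and the candidate subsheaf can have $\chi=-1$; it does not destabilize, and Riemann--Roch on the support gives nothing better. Semistability of $\mf$ bounds the degrees of \emph{quotients} from below; converting that into the bound you need on a \emph{subsheaf} requires the twist $\xi=L'.(L-L')$ to satisfy $\xi\leq1$.

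What is missing is exactly the combinatorial input that the paper extracts from the hypothesis: since no subscheme of $C$ has positive genus, the dual graph of $C$ has no cycles, hence there is an ``end'' component $C'\simeq\pone$ with $C'.(\overline{C-C'})\leq1$. Only for such a component do the two restriction sequences (to $C'$ and to its complement) force a sub- or quotient sheaf isomorphic to $\mo_{\pone}(-1)$, after which one splits it off in the S-equivalence class and inducts on $N_C$. You also leave the non-reduced case to a parenthetical, whereas the paper first reduces the multiplicity of the support to the generic rank of $\mf$ on each component before running the induction. Until you (i) derive the existence of a component with $\xi\leq1$ from the hypothesis and (ii) carry out the degree bookkeeping for that specific component, the classification of stable factors --- and hence the whole proposition --- is not established.
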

\begin{proof}In order to prove $\mhu\simeq\ls,$  it is enough to show that in this case $\pi:\mhu\ra\ls$ is bijective.  

Let $C$  be an arbitrary curve in $ \ls,$  let $\{C_i\}$ be the collection of its irreducible components with reduced structure,  i.e. $C_i\simeq\pone$ since $C$ has no subscheme of positive genus.  We then can write $C=\sum_im_iC_i$ as a divisor and let $N_C:=\sum m_i$.  

It is enough to show that for any curve $[C]\in\ls,$  $\pi^{-1}([C])$ is only one point corresponding to the S-equivalence class of $\bigoplus_{i}\mo_{C_i}(-1)^{\oplus m_i}.$  And then by Lemma \ref{ngs},  the claim on $\z$ will follow since $h^0(\mo_{\mathbb{P}^1}(-1))=0.$

We want to prove the following statement:  for any $\mf$ semi-stable sheaf of $\mo_{C}$-modules,  which is of Euler characteristic zero,  $\mf$ is S-equivalent to $\bigoplus_i\mo_{C_i}(-1)^{\oplus n_i}$, 
with $n_{i}[C_i]$ the first Chern class of $\mf.$  

Let $\overline{C}=\sum min(n_i,m_i)C_i.$  Notice that if $\overline{C}\subsetneq C$,  then $\mf$ is a $\mo_{\overline{C}}$-module and then we can reduce to $\overline{C}.$  So with no loss of generality,  we assume that $n_i\geq m_i.$  And the case $n_i=m_i$ proves the proposition.

Now we prove that statement.  We first assume that $[C]$ is connected.  We then do the induction on $N_C$.

If $N_C=1,$  then the statement is trivial since $C=\pone$ and every semi-stable sheaf $\mf$ of Euler characteristic zero and first Chern class $n[C]$ is isomorphic to $\mo_{\pone}(-1)^{\oplus n}$.  

Assume when $N_C\leq n,$  all semi-stable sheaves $\mf$ with $\chi(\mf)=0$ on $C$ are S-equivalent to $\bigoplus_i\mo_{C_i}(-1)^{\oplus n_i}$ with $n_{i}[C_i]$ the first Chern class of $\mf.$  Then let $C$ have $N_C=n+1$ irreducible components counting with multiplicity.   There must exist one integral subscheme $C'\simeq \pone$ in $C$ such that $L'.(L-L')\leq1$ with $L'$ the class of $C'.$  This is because,  if every integral curve in $C$ intersected the closure of its complement at no less than two points,  $C$ would have a "circle" and hence there would be a subscheme of $C$ with positive genus,  which contradicts the given assumption.  
Let $\xi=L'.(L-L')$ and $C''=\overline{C-C'}.$  Then we have these two exact sequences:
\begin{equation}\label{nonre}\xymatrix@C=0.3cm{
 0 \ar[rr]&&\mathcal{O}_{C'}(-\xi)
 \ar[rr]&& \mathcal{O}_{C}
\ar[rr] &&\mathcal{O}_{C''}\ar[rr]&&0;}\end{equation}
\begin{equation}\label{nonred}\xymatrix@C=0.3cm{
 0 \ar[rr]&&I
 \ar[rr] && \mathcal{O}_{C}
\ar[rr] &&\mathcal{O}_{C'}\ar[rr]&&0.}\end{equation}

In the second sequence $I$ is defined as the kernel.  We do not write $\mo_{C''}(-\xi)$ but $I$ instead because $C''$ may not have all the points linearly equivalent. 
Let $\mf\in \pi^{-1}([C]),$  we then
tensor those two exact sequences by $\mathcal{F}$,  we get
\begin{equation}\label{vnonredu}\xymatrix@C=0.3cm{
 \mathcal{F}|_{C'}(-\xi)\ar[rr]^{a} && \mathcal{F}
\ar[rr] &&\mathcal{F}|_{C''}\ar[rr]&&0;}\end{equation}
\begin{equation}\label{vnonreduc}\xymatrix@C=0.3cm{
\mathcal{F}\otimes I\ar[rr] && \mathcal{F}
\ar[rr]^{b} &&\mathcal{F}|_{C'}\ar[rr]&&0;}\end{equation}
The morphism $b$ is not zero because it is just a restriction.   Let $\mf'$ be the torsion-free part of $\mf|_{C'},$  then $\mf'$ is a direct sum of line bundles on $C'\simeq\pone$.  And since every direct summand $\mo_{C'}(\mu)$ of $\mf'$ is a quotient of $\mf$,  $\mu\geq-1$ because of the semi-stabiblity of $\mf.$

The morphism $a$ factors through $\mf|_{C'}(-\xi)\ra\mf'(-\xi)$ since $\mf$ is pure.  If $a\neq0,$  then there exists one summand $\mo_{\pone}(\mu)$ of $\mf'$ such that $\chi(\mo_{\pone}(\mu-\xi))\leq 0$,  hence $\mu\leq-1+\xi$ and hence either $\xi=0$ and $\mu=-1$ or $\xi=1$ and $\mu=0.$  And hence either there is a quotient $\mg$ of $\mf$ and $\mg\simeq\mo_{\pone}(-1)$ or there is a subsheaf $\mg'$ of $\mf$ such that $\mg'\simeq\mo_{\pone}(-1)$.  For the first case,  we have 
\[0\ra\mathcal{K}\ra\mf\ra\mg\ra0.\]
Because every subsheaf of $\mathcal{K}$ can not have positive Euler characteristic,  it is semi-stable of Euler characteristic $0.$  And hence $\mf$ is S-equivalent to $\mathcal{K}\oplus\mo_{\pone}(-1).$  But $\mathcal{K}$ is supported at curve $C''$ with $N_{C''}<N_{C},$  so by induction we are done.  For the second case when $\mf$ has a subsheaf isomorphic to $\mo_{\pone}(-1),$  it is analogous.

If $a$ is zero,  that means that $\mf$ is actually a $\mo_{C''}$-module,  then we can use the induction assumption to get the result.  And thus we have proven the proposition for $C$ connected.

And when $C$ has more than one connected component,   the conclusion follows immediately from the fact that $\mf$ is the direct sum of its restrictions to every connected component.   Hence we have proven the whole proposition.
\end{proof}

Proposition \ref{isomls} applies to these following examples:
\begin{example}\label{exone}$X=\mathbb{P}^2.$  Denote $H$ the hyperplane.  Then we let $L=dH$ with $d=1,2$.  
\end{example}  

\begin{example}\label{extwo}Let $X$ be any Hirzebruch suface,  i.e. $X=\mathbb{P}(\mo_{\mathbb{P}^1}\oplus\mo_{\mathbb{P}^1}(-e))$ for some $e\geq0.$  Let $F$ be the fiber class and $G$ the section class with $G.G=-e$.  Then $L=nF,nG,nF+G,$ for any $n\geq1.$

And on the blow-up $\hat{X},$  denote $\hat{F}$ (resp. $\hat{G}$) the pull back of $F$ (resp. $G$) and $E$ the exceptional divisor.  Then $L=\hat{F},\hat{G}, 2\hat{F}-E,2\hat{G}-E, \hat{F}+\hat{G}-E.$  
\end{example}

In this case,  the generating function can be written down as
\begin{eqnarray}Z^r(t)&=&\sum_{n}h^0(\mhu,\lcn)t^n\nonumber\\
&=&\sum_{n}h^0(|L|,\Theta^r\otimes\mathcal{O}_{\ls}(n))t^n\nonumber
\\
&=&\sum_{n}h^0(\ls, \mathcal{O}_{\ls}(n))t^n\nonumber\\
&=&\large{\frac{1}{(1-t)^{l+1}}}.\nonumber
\end{eqnarray}
where $l=dim~\ls.$

Obviously $\Theta^r(n)$ has no higher cohomologies for $n\geq0.$  And the formula is exactly what we expect and it matches G\"ottsche's result on the side of rank 2 sheaves.  Hence we have  
\begin{coro}Let $X$ be the projective plane or some Hirzebruch surface,  and let $L$ be an effective line bundle on $X$ as in Proposition \ref{isomls}.  Let $u=(0,L,\chi(u)=0)$ and $c_n=(2,0,n)$,  then we have for all $n\geq0$
\[\chi(M(c_n),\lu)=\chi(M(u),\lambda_{c_n})=h^0(M(u),\lambda_{c_n}).\]
\end{coro}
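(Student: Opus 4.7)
The plan is to read off both equalities from the generating-function computation that directly precedes the corollary. By Proposition \ref{isomls}, the genus hypothesis on $L$ forces $\pi:\mhu\to\ls$ to be an isomorphism and $\z\simeq\mo_{\ls}$, so $\mhu\cong\mathbb{P}^l$ with $l=\dim\ls$, and the line bundle $\lambda_{c_n}=\z^{2}\otimes\pi^{*}\mo_{\ls}(n)$ is identified with $\mo_{\mathbb{P}^l}(n)$. Thus from the outset the problem collapses to a comparison on projective space on one side, and to a known rank-$2$ computation on the other.

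The second equality $\chi(\mhu,\lambda_{c_n})=h^0(\mhu,\lambda_{c_n})$ is then immediate from the classical vanishing $H^i(\mathbb{P}^l,\mo_{\mathbb{P}^l}(n))=0$ for $i>0$ and $n\geq 0$; both sides equal $\binom{n+l}{l}$, which matches the coefficient of $t^n$ in the generating function $1/(1-t)^{l+1}$ already computed. This step needs nothing beyond Proposition \ref{isomls} and Serre's theorem on $\mathbb{P}^l$.

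For the first equality $\chi(M_X^H(c_n),\lu)=\chi(\mhu,\lambda_{c_n})$, the strategy is to invoke G\"ottsche's formula for $\chi(M_X^H(2,0,n),\lambda_L)$, which is available for $X=\mathbb{P}^2$ and for Hirzebruch surfaces, and to specialize it to each $L$ listed in Examples \ref{exone} and \ref{extwo}. For each such $L$ one must verify that G\"ottsche's expression, as a function of $n$, reproduces $\binom{n+l}{l}$ with the appropriate $l$ (equivalently, that its generating series in $t$ is $1/(1-t)^{l+1}$).

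The main obstacle is precisely this last compatibility check. Although conceptually it is nothing more than substituting parameters into a known formula, in practice G\"ottsche's generating function is typically packaged through modular-type series, and one must carefully unwind it in each of the low-genus cases $L=H,2H$ on $\mathbb{P}^2$ and $L=nF,nG,nF+G$ (and the analogous blow-up cases) on Hirzebruch surfaces in order to recognize the rational function $1/(1-t)^{l+1}$. Once this match is confirmed for every $L$ on the list, both equalities of the corollary follow at once.
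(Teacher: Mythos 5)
Your proposal follows essentially the same route as the paper: the identification $\mhu\simeq\ls\simeq\mathbb{P}^l$ and $\lambda_{c_n}\simeq\mo_{\ls}(n)$ from Proposition \ref{isomls} gives the vanishing of higher cohomology and hence the second equality, while the first equality is obtained by matching the resulting generating function $1/(1-t)^{l+1}$ against G\"ottsche's computation of $\chi(M(2,0,n),\lambda_L)$. The paper likewise leaves that comparison with G\"ottsche's formula as an asserted match rather than an explicit verification, so the step you flag as the remaining obstacle is exactly the step the paper also takes on faith.
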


\subsection{Some properties of $\mhu$}
In this subsection we are going to prove some properties of the moduli space $\mhu$.  Those results provide part of the key ingredient for our later argument,  when we deal with the positive genus cases. 

Now we let the surface $X$ and the effective line bundle $L$ be as in Example $\ref{exmone}$ and $\ref{exmtwo}$ as follows.
  
\begin{example}\label{exmone}Let $X=\mathbb{P}^2,$   and $H$ be the hyperplane.  Let $L=dH$ with $d\geq 3$.
\end{example}  
\begin{example}\label{exmtwo}Let $X=\mathbb{P}(\mo_{\mathbb{P}^1}\oplus\mo_{\mathbb{P}^1}(-e))$ for $e=0,1,$  with $F$ the fiber class and $G$ the section class.   $G.G=-e$.  Let $L=2G+nF,$ for any $n>max\{1,2e\}.$ 
\end{example}

Let $K$ be the canonical divisor on $X.$  Denote $l$ to be $dim~\ls$,  $g_L$ the arithmetic genus of curves in $\ls.$  As one can see,  we always have $g_L>0.$  

We have a natural morphism $\pi:\mhu\ra\ls$ sending every sheaf to its schematic support.  It is easy to see that fibers of $\pi$ over integral curves are of dimension $g_L,$  but fibers over non-integral curves might not be of dimension $g_L.$  Let $\ls^{int}$ denote the biggest open subscheme in $\ls$ formed by the points where fibers of $\pi$ have dimension $g_L$.  Of course,  $\ls^{int}$ contains all points corresponding to integral curves. 

We can see that $L$ satisfies three conditions as follows:

$(\ha_1)$ There is a very ample divisor $H$,   such that for any $0<L'\leq L,$ either $L'.(K+H)<0$ or $L'=G$ or $2G$ on $X=\mathbb{P}(\mo_{\mathbb{P}^1}\oplus\mo_{\mathbb{P}^1}(-1))$. 

$(\ha_2)$  For any $0<L'\leq L$,  if $g_{L'}\leq0$ then any curve in $|L'|$ contains no subscheme of positive genus;  and moreover for any collection of effective line bundles  $\{L_i\}$ such that $L=\sum L_i$,  we have $\sum l_i+\sum max\{g_{L_i},0\}+2\leq l+g_L$ with $l_i$ the dimension of $|L_i|.$ 

$(\ha_3)$ There are connected smooth curves in $\ls$,  and non-integral curves are contained in a subset of codimension $2$ in $\ls$.  Hence $\ls-\ls^{int}$ is of codimension at least $2$ in $\ls$. 

From now on for simplicity let $M=\mhu$ and $M^s=\mhu^s.$
\begin{lemma}\label{mainzero}The moduli space $M$ is normal and Cohen-Macaulay,  and its stable locus $M^s$ is smooth of dimension $L.L+1=l+g_L$. 
\end{lemma}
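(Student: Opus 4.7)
The plan is to apply the standard deformation-theoretic and GIT-theoretic package for moduli of sheaves on a surface. The critical input is that the obstruction space $\text{Ext}^2(\mathcal{F}, \mathcal{F})$ vanishes for every semistable $\mathcal{F}$ of class $u$, and this is forced by condition $(\ha_1)$.

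By Serre duality on the surface, $\text{Ext}^2(\mathcal{F}, \mathcal{F}) \cong \text{Hom}(\mathcal{F}, \mathcal{F} \otimes K_X)^\vee$. A nonzero map $\phi\colon \mathcal{F} \to \mathcal{F} \otimes K_X$ would have pure $1$-dimensional image $\mathcal{I}$ that is simultaneously a quotient of the semistable $\mathcal{F}$ and a subsheaf of the semistable $\mathcal{F} \otimes K_X$. Semistability then pinches the reduced Hilbert polynomial of $\mathcal{I}$ between those of $\mathcal{F}$ and of $\mathcal{F} \otimes K_X$; since these two differ by $L.K_X/(L.H)$, this forces $L.K_X \geq 0$. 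For our $L$ in Examples \ref{exmone}--\ref{exmtwo} we have $L \neq G, 2G$, so $(\ha_1)$ applied with $L' = L$ gives $L.(K_X + H) < 0$, i.e.\ $L.K_X < -L.H < 0$, contradicting $L.K_X \geq 0$. Hence $\text{Ext}^2(\mathcal{F}, \mathcal{F}) = 0$ for every semistable $\mathcal{F}$ in $u$.

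I would then convert this vanishing into smoothness of the GIT presentation. Pick $n \gg 0$ so that $H^i(\mathcal{F}(n)) = 0$ for $i > 0$ and every semistable $\mathcal{F}$, and realize $\omu$ as an open subscheme of the Quot-scheme of $u$-quotients of $\mathcal{H} := \mathcal{O}_X(-n)^{\oplus N}$. At a point $[q\colon \mathcal{H} \to \mathcal{F}]$ with kernel $K$, the tangent and obstruction spaces of Quot are $\text{Hom}(K, \mathcal{F})$ and $\text{Ext}^1(K, \mathcal{F})$; the long exact sequence from $0 \to K \to \mathcal{H} \to \mathcal{F} \to 0$, together with $\text{Ext}^{\geq 1}(\mathcal{H}, \mathcal{F}) = H^{\geq 1}(\mathcal{F}(n))^N = 0$, identifies $\text{Ext}^1(K, \mathcal{F}) \cong \text{Ext}^2(\mathcal{F}, \mathcal{F}) = 0$. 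So $\omu$ is smooth. Since $M$ is a good GIT quotient of $\omu$ by a reductive group, Boutot's theorem implies that $M$ has rational singularities; in particular $M$ is normal and Cohen--Macaulay. On the stable open locus, $G$ acts with stabilizer equal to the scalar center and $M^s = \omu^s / G$ is a geometric quotient, so $M^s$ is smooth.

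For the dimension, stability of $\mathcal{F} \in M^s$ gives $\text{Hom}(\mathcal{F}, \mathcal{F}) = \mathbb{C}$, so with $\text{Ext}^2 = 0$ we obtain $\dim T_{[\mathcal{F}]} M^s = \dim \text{Ext}^1(\mathcal{F}, \mathcal{F}) = 1 - \chi(\mathcal{F}, \mathcal{F}) = L.L + 1$ via the Riemann--Roch value $\chi(\mathcal{F}, \mathcal{F}) = -L.L$. The identity $L.L + 1 = l + g_L$ follows from Riemann--Roch on $X$ (using vanishing of $H^{\geq 1}(X, L)$ on the rational surfaces and line bundles of our examples) combined with the adjunction formula $2 g_L - 2 = L.(L + K_X)$. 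The main obstacle is the slope-comparison in the first step: one must carefully verify purity of the image $\mathcal{I}$ and the semistability of $\mathcal{F} \otimes K_X$ so that the pinching of reduced Hilbert polynomials is rigorous and applies uniformly to all semistable (not just stable) $\mathcal{F}$, since it is this uniform vanishing that is needed to smooth out $\omu$ everywhere and invoke Boutot for the strictly semistable points of $M$.
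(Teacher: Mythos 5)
Your proposal is correct and follows essentially the same route as the paper: Serre duality plus condition $(\ha_1)$ to kill $\mathrm{Ext}^2(\mf,\mf)$, hence smoothness of $\omu$ and $M^s$, Boutot's theorem for normality and Cohen--Macaulayness of the good quotient $M$, and Riemann--Roch for the dimension. The only (harmless) divergence is that your slope-pinching argument needs $(\ha_1)$ only for $L'=L$ to get $L.K<0$, whereas the paper invokes $L'.K<0$ for all $0<L'\leq L$; your version is a fleshed-out form of the paper's one-line justification.
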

\begin{proof}Let $\mf$ be a semistable sheaf of class $u$.  From condition $(\ha_1)$ we know that $L'.K<0$ for all $0<L'\leq L$,  which together with the semistability of $\mf$ implies the vanishing of Ext$_0^2(\mf,\mf),$  since Ext$^2(\mf,\mf)^{\vee}\simeq$ Hom$(\mf,\mf\otimes K)=0.$  Hence we have the smoothness of both $M^s$ and the Quot-scheme $\omu$.   The whole moduli space $M$ is normal and Cohen-Macaulay because it is a good quotient of a smooth scheme by a reductive group (see \cite{bout}).  To get the dimension is just a direct computation.
\end{proof}

\begin{rem}Because sheaves in $M$ are torsion sheaves with rank zero,  the trace map $tr:$ Ext$^1(\mf,\mf)\ra H^1(\mo_X)$ may not be surjective if $H^1(\mo_X)$ is not zero.  Then it will be more difficult to compute the dimension of $M^s$.
\end{rem}
\begin{rem}\label{chionesm}Lemma \ref{mainzero} also holds for $M_n:=M(u_n)$ with $u_n=(0,L,\chi(u_n)=n)$ and $n$ any integer. 
\end{rem}
\begin{lemma}\label{mainze}The strictly semi-stable locus,  i.e. $M-M^s,$   is of codimension at least $2.$  
\end{lemma}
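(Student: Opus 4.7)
The plan is to stratify $M\setminus M^s$ by Jordan--Hölder (JH) type and bound each stratum's dimension using condition $(\ha_2)$.

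Any strictly semistable $\mf\in M$ admits a JH filtration whose stable factors $\mf_i$ have classes $u_i=(0,L_i,\chi(u_i)=0)$, with $L=\sum_{i=1}^k L_i$, $k\geq 2$, and each $L_i$ a nonzero effective divisor class (the $\chi(u_i)=0$ follows from the reduced Hilbert polynomial of $\mf$ being $t$). In the coarse moduli space $M$ the class $[\mf]$ coincides with that of the polystable representative $\bigoplus_i\mf_i$, so the stratum with prescribed decomposition type $(L_1,\ldots,L_k)$ is the image of the direct-sum map $\prod_i M(u_i)\to M$ and thus has dimension at most $\sum_i\dim M(u_i)$.

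Next I bound $\dim M(u_i)$: since $L_i\leq L$, the properties $(\ha_1)$ and $(\ha_2)$ pass to the sublinear system $|L_i|$. If $g_{L_i}\geq 1$, Lemma~\ref{mainzero} applied to $u_i$ gives $\dim M(u_i)=l_i+g_{L_i}$; if $g_{L_i}\leq 0$, Proposition~\ref{isomls} yields $M(u_i)\simeq|L_i|$, hence $\dim M(u_i)=l_i$. In either case $\dim M(u_i)\leq l_i+\max\{g_{L_i},0\}$. Summing over the stratum and invoking $(\ha_2)$ directly:
\[\dim(\text{stratum})\leq \sum_i l_i+\sum_i\max\{g_{L_i},0\}\leq l+g_L-2=\dim M-2.\]
Since only finitely many decomposition types appear, taking the union gives $\mathrm{codim}_M(M\setminus M^s)\geq 2$.

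The one mild obstacle is justifying $\dim(\text{stratum})\leq \sum_i\dim M(u_i)$: this uses that every strictly semistable S-equivalence class is uniquely represented by a polystable $\bigoplus_i\mf_i$, so the stratum is the set-theoretic image of $\prod_iM^s(u_i)$ under a morphism with at most finite symmetric fibers when several $u_i$ coincide. Beyond this, the proof is a direct bookkeeping application of $(\ha_2)$, a condition tailored precisely for this estimate.
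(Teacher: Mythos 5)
Your argument is correct and is essentially the paper's own proof, which (following Le Potier's Proposition 3.4 in the $\mathbb{P}^2$ case) identifies $M-M^s$ set-theoretically with a finite union of images of $\prod_i M^H_X(u_i)^s$ over decompositions $\sum L_i = L$ and then applies Lemma \ref{mainzero} together with condition $(\ha_2)$ exactly as you do. Your write-up merely makes explicit the dimension bookkeeping and the role of $\max\{g_{L_i},0\}$ via Proposition \ref{isomls}, which the paper leaves implicit.
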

\begin{proof}
To see $M-M^s$ is of codimension at least $2,$  we can just follow Le Potier's method to prove Proposition $3.4$ in \cite{lee}:  there is a injective map from $M-M^s$ to $\bigcup_{\sum u_i=u} (\prod_{i} M^H_X(u_i)^s$),  where $u_i=(0,L_i,\chi(u_i)=0)$ for some effective $L_i$,  and the union is taken over all the collections $\{u_i\}$ such that $\sum L_i=L.$  There are finitely many of such collections.  So the condition $(\ha_2)$ and Lemma \ref{mainzero} imply that $M-M^s$ is of codimension at least $2$.  
\end{proof}

Recall the quotient in (\ref{quot})
\begin{equation}\label{quotch}
\xymatrix{
\Omega_u\ar[r]^{\phi_u}&M,}
\end{equation}
where $\phi_u$ is a good quotient by some reductive group $G.$  Denote $\omu^{sm}$ to be the open subscheme of $\omu$ consisting of all the sheaves that are locally free on their supports.  Let $M^{sm}$ be the intersection of $M^s$ with the image of $\omu^{sm}$ through $\phi_u.$  Notice that there might be strictly semi-stable sheaves that are locally free on their supports,  hence $M^{sm}\subsetneq \phi_u(\omu^{sm})$ in general.

It is proven by Le Potier (\cite{lee},  Proposition 2.8 and Proposition 2.9) that $\pi\circ\phi_u$ restricted on $\omu^{sm}$ is smooth and $\pi$ restricted on $M^{sm}$ is smooth.  

\begin{lemma}\label{lep}$\omu-\omu^{sm}$ is of codimension no less than $2$ in $\omu.$  Hence $\omu^{sm}$ is dense in $\omu.$ \end{lemma}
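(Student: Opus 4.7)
The plan is to bound the codimension of $\omu\setminus\omu^{sm}$ by a dimension count on $M$. Since $\phi_u:\omu\to M$ is a good quotient by a reductive group and being locally free on its support is invariant under the group action, the codimension of $\omu\setminus\omu^{sm}$ in $\omu$ equals the codimension of its image in $M$. By Lemma \ref{mainze}, the strictly semi-stable locus $M\setminus M^s$ already has codimension at least $2$, so it is enough to show that stable sheaves not locally free on their supports form a subset of codimension at least $2$ in $M^s$.

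I would first dispose of the non-integral case: condition $(\ha_3)$ says $\ls\setminus\ls^{int}$ has codimension at least $2$ in $\ls$, and condition $(\ha_2)$ combined with the injection into products of smaller moduli spaces used in the proof of Lemma \ref{mainze} bounds the fiber dimensions of $\pi$ so that $\pi^{-1}(\ls\setminus\ls^{int})\cap M^s$ itself has codimension at least $2$ in $M^s$. For the integral case, for each $\mf$ with integral schematic support $C$, I would introduce the $\mo_C$-reflexive hull $\mf^{\vee\vee}$ and the canonical short exact sequence $0\to\mf\to\mf^{\vee\vee}\to T\to 0$, where $T$ is a zero-dimensional sheaf necessarily supported on the singular locus of $C$; the sheaf $\mf$ is locally free on $C$ if and only if $T=0$. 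Stratify the non-locally-free stable locus by the length $\delta\geq 1$ of $T$ and parameterize each stratum via $\mf\mapsto(\mf^{\vee\vee},T)$, where $\mf^{\vee\vee}$ lives in the moduli $M^H_X(0,L,\chi=\delta)$, which by Remark \ref{chionesm} is smooth of the same dimension $l+g_L$ on its stable locus.

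By $(\ha_3)$ smooth curves are dense in $\ls$, so singular integral curves form a locus of codimension at least $1$ in $\ls$; and by the Altman--Kleiman theory the compactified Jacobian of an integral curve with planar singularities is irreducible with the locally free locus open and dense, so over each singular integral $C$ the non-locally-free sheaves form a locus of codimension at least $1$ inside the fiber $\pi^{-1}([C])$. Combining these two codimension-$1$ contributions gives codimension at least $2$ in $M^s$, which is what we want. The main obstacle is to verify that the parameterizing map $\mf\mapsto\mf^{\vee\vee}$ has fibers of the expected dimension (bounded by the dimension of the Quot-scheme of length-$\delta$ quotients supported on $\mathrm{Sing}(C)$), so that larger $\delta$ does not produce higher-dimensional strata; this reduces to a local analysis at the singular points of $C$ and is where the planarity of the singularities is used. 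A subtler secondary point is promoting the argument of Lemma \ref{mainze} uniformly to the $\chi=\delta$ moduli so the non-integral reduction stays within the bound $l+g_L-2$, for which Remark \ref{chionesm} and condition $(\ha_2)$ suffice.
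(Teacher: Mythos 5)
Your proposal has two genuine gaps, either of which is fatal as written. First, the reduction from $\omu$ to $M$ at the outset is not valid: $\phi_u$ is a geometric quotient (indeed a principal $PG$-bundle) only over the stable locus, while over $\omu-\omu^s$ its fibres are unions of orbits of varying dimension, so the codimension in $\omu$ of a $G$-invariant subset is not controlled by the codimension of its image in $M$. In particular Lemma \ref{mainze}, which bounds the codimension of $M-M^s$ in $M$, does not give any bound on the codimension of $\omu-\omu^s$ in $\omu$, and the strictly semistable points of $\omu$ whose sheaves fail to be locally free on their supports are left unaccounted for. This is exactly where the difficulty lies: the paper's own proof, following Le Potier, works entirely inside $\omu$ --- it shows that a point $[\mathcal{H}\to\mf]$ with $\mf$ not locally free lies in a codimension-$2$ subset whenever $\mathcal{E}xt^1(\mf,\mf)$ is globally generated, reduces global generation via Castelnuovo--Mumford and Serre duality to the vanishing of $\mathrm{Hom}(\mf,\mf(H+K))$, and then handles the one exceptional case on $\mathbb{P}(\mo_{\pone}\oplus\mo_{\pone}(-1))$ by an explicit dimension count of two strictly semistable loci $\Sigma_1,\Sigma_2$ inside $\omu$ itself.

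Second, your stratification of the integral case is vacuous. The curves here are planar, hence Gorenstein, and on a one-dimensional Gorenstein scheme every pure sheaf is already reflexive; so in the sequence $0\to\mf\to\mf^{\vee\vee}\to T\to0$ one has $T=0$ for every $\mf$ in the fibre, while $\mf$ need not be locally free (e.g.\ the maximal ideal of a node). Thus ``$T=0$ iff $\mf$ is locally free'' is false, the strata with $\delta\geq1$ are empty, and the ``main obstacle'' you isolate --- bounding the fibres of $\mf\mapsto\mf^{\vee\vee}$ --- is an empty problem; the relevant invariant is the local isomorphism type at the singularities (or the endomorphism order $\mo_C\subseteq\mathcal{E}nd(\mf)\subseteq\nu_{*}\mo_{\tilde{C}}$), not the reflexive defect. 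Granting instead the Altman--Kleiman--Iarrobino irreducibility of the compactified Jacobian, which you also invoke, the fibrewise codimension-one statement over integral curves does follow without any stratification; but even then your treatment of stable sheaves supported on non-integral curves is unsupported: the injection into products of smaller moduli spaces in Lemma \ref{mainze} applies only to S-equivalence-decomposable sheaves, and the fibre-dimension control coming from $\ls^{int}$ or from the smoothness of $\pi$ on $M^{sm}$ is precisely what is unavailable on the non-locally-free locus you are trying to bound, so this branch of the argument is circular.
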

\begin{proof}
It is also proven by Le Potier (\cite{lee},  Lemma 3.2) that $\omu-\omu^{sm}$ is of codimension $2$ when the surface is $\mathbb{P}^2.$  However we can just follow his method and finally get that:  For every closed point $[\mathcal{H}\ra\mf]\in\omu$ with $\mf$ not locally free on its support,  if the sheaf $\mathcal{E}xt^1(\mf,\mf)$ is globally generated,  then it is contained in a subset of $\omu-\omu^{sm}$ which is of codimension at least $2$ in $\omu$.

By Castelnuovo-Mumford Theorem (\cite{mum};  or \cite{dan},  Lemma 1.7.2),  if for all $i>0$,  $H^i(\mathcal{E}xt^1(\mf,\mf)(-iH))=0$ with $H$ some very ample line bundle,  then $\mathcal{E}xt^1(\mf,\mf)$ is globally generated.  Since $\mf$ is of dimension $1$,   we know that $H^j(\mathcal{H}om(\mf,\mf(-iH)))=0$ and $H^j(\mathcal{E}xt^1(\mf,\mf)(-iH))=0$ for all $i$ and $j>1.$  And also by Lemma \ref{llfr} in Appendix A,  we know that $\mf$ has a locally free resolution of length $1$,  hence $\mathcal{E}xt^j(\mf,\mf(-iH))=0$ for all $i$ and $j>1.$  Therefore by the spectral sequence we have Ext$^2(\mf,\mf(-H))\simeq H^1(\mathcal{E}xt^1(\mf,\mf(-H))).$

By Serre Duality,  Ext$^2(\mf,\mf(-H))\simeq$Hom$(\mf,\mf(H+K))^{\vee}.$  For $X=\mathbb{P}^2$ or $\mathbb{P}^1\times\mathbb{P}^1$,  Condition $(\ha_1)$ says that for all $0<L'\leq L$,  $L'.(H+K)<0$ which together with the semi-stability of $\mf$ lead to the vanishing of Hom$(\mf,\mf(H+K))$.  And hence we know that $\omu-\omu^{sm}$ is of codimension at least 2 in $\omu$ for $X=\mathbb{P}^2$ or $\mathbb{P}^1\times\mathbb{P}^1$.

Let $X=\mathbb{P}(\mo_{\mathbb{P}^1}\oplus\mo_{\mathbb{P}^1}(-1))=:\hat{\mathbb{P}^2}$,  i.e.  $X$ is obtained by blowing up $\mathbb{P}^2$ at a point.  Let $L=2G+nF$ with $n\geq3$ and $H=G+2F$ very ample.  We only need to show that all closed points $[\mathcal{H}\ra\mf]\in\omu$ with Hom$(\mf,\mf(H+K))\neq0$ are contained in some subset of codimension at least $2.$  
 
If Hom$(\mf,\mf(H+K))\neq0$,  then $\mf$ must contain a subsheaf $\mf'$ semistable of Euler characteristic zero such that Hom$(\mf,\mf'(H+K))\neq0$.  Then we must have $c_1(\mf')=i[C_G]$ with $C_G$ the only curve in class $G$ and $i=1$ or $2$.  By Proposition \ref{isomls},  we know that every semistable sheaf of Euler characteristic zero on $2G$ is $S$-equivalent to $\mo_{C_G}(-1)\oplus\mo_{C_G}(-1)$.  Hence with no loss of generality,  we assume $\mf'$ is supported at the curve $C_G\simeq \pone$ and $\mf'\simeq \mf'(H+K)\simeq\mo_{C_G}(-1)$.  Then we have the following exact sequence.
\begin{equation}\label{febu}0\ra\mo_{C_G}(-1)\ra\mf\ra\mg\ra0.
\end{equation}
Since Hom$(\mf,\mo_{C_G}(-1))\neq0$,  either $\mo_{C_G}(-1)$ is a direct summand of $\mf$ then sequence (\ref{febu}) splits or there is a nonzero morphism $\mg\ra\mo_{C_G}(-1)$.  We define two subsets of $\omu-\omu^s$ as follows.
\[\Sigma_1:=\{[\mathcal{H}\ra\mf]\in\omu|\mo_{C_G}(-1)~is~a~direct~summand~of~\mf.\}\]
\[\Sigma_2:=\{[\mathcal{H}\ra\mf]\in\omu|with~\mf~in~sequence~(\ref{febu})~and~Hom(\mg,\mo_{C_G}(-1))\neq0\}.\]

It will suffice for proving the rest of the lemma to show that both $\Sigma_1$ and $\Sigma_2$ are of codimension at least $2$ in $\omu.$  It is easy to compute that 
$dim~\Sigma_1=dim~\omu-G.(G+nF)$,   hence $\Sigma_1$ is of codimension $\geq2$ in $\omu$ as $n\geq3.$ 

Now we estimate the dimension of $\Sigma_2.$  In sequence $(\ref{febu})$ the sheaf $\mg$ is semistable and $c_1(\mg)=G+nF.$  Moreover since $\mo_{C_G}(-1)$ is stable,  every nonzero morphism $\mg\ra\mo_{C_G}(-1)$ must be surjective.  Thus $\mg$ must lie in the following sequence
\begin{equation}\label{febutwo}0\ra\mk\ra\mg\ra\mo_{C_G}(-1)\ra0,
\end{equation} 
where $\mk$ is semi-stable of Euler characteristic zero and $c_1(\mk)=nF.$  
By Lemma \ref{finitegz} in Appendix B,  the space of isomorphic classes of such $\mk$ has dimension equal to $n=dim~|nF|.$

For the fixed $\mk$,  all the isomorphic classes of $\mg$ in (\ref{febutwo}) form a space of dimension no larger than $dim~$(Ext$^{1}(\mo_{C_G}(-1),\mk))/\mathbb{G}_m$.  Since for $i=0,2,$ Ext$^i(\mo_{C_G}(-1),\mk)=0$,  $dim~$(Ext$^{1}(\mo_{C_G}(-1),\mk)/\mathbb{G}_m)$=$G.nF-1=n-1$. 

Fix the sheaf $\mg$,  then the different choices of $\mf$ in (\ref{febu}) form a space of dimension at most $dim~($Ext$^1(\mg,\mo_{C_G}(-1)))/\mathbb{G}_m$.  Notice that Ext$^2(\mg,\mo_{C_G}(-1))=0$,  and $dim~$Hom$(\mg,\mo_{C_G}(-1))=1$.  Hence $dim~($Ext$^1(\mg,\mo_{C_G}(-1)))/\mathbb{G}_m=G.(G+nF)+1-1=n-1.$  

Finally we know that $dim~\Sigma_2\leq dim~$Hom$(\mathcal{H},\mathcal{H})-1+n+(n-1)+(n-1)$.  On the other hand we have $dim~\omu=dim~$Hom$(\mathcal{H},\mathcal{H})-1+g_L+l$.  By a direct computation,  we get $g_L+l=4n-3$.  Hence $dim~\Sigma_2\leq dim~\omu-(n-1)$,  and hence the lemma as $n\geq3.$ 
\end{proof}
\begin{lemma}\label{oirred}$\omu$ is irreducible.
\end{lemma}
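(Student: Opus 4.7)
The plan is to reduce the irreducibility of $\Omega_u$ to that of $M:=\mhu$, and then prove $M$ is irreducible by applying Stein factorization to the morphism $\pi\colon M\to|L|$ from Proposition \ref{morph}.

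By Lemma \ref{mainzero}, $\Omega_u$ is smooth, so irreducibility is equivalent to connectedness. By Lemma \ref{mainze}, $M\setminus M^s$ has codimension at least $2$ in $M$; since $\phi_u$ is a good quotient by a connected reductive group (with fibers of bounded dimension), the preimage $\phi_u^{-1}(M\setminus M^s)$ likewise has codimension at least $2$ in $\Omega_u$, so $\phi_u^{-1}(M^s)$ is open and dense. Over $M^s$ the quotient is a principal $PGL(V)$-bundle (stable sheaves being simple), and as $PGL(V)$ is connected, $\phi_u^{-1}(M^s)\to M^s$ has connected fibers. Consequently connectedness of $M^s$ transfers to $\phi_u^{-1}(M^s)$ and then, by density inside the smooth scheme $\Omega_u$, to $\Omega_u$ itself. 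Since $M$ is normal by Lemma \ref{mainzero}, $M$ connected is equivalent to $M$ irreducible, so it remains to show $M$ is connected.

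For this I apply Stein factorization to the proper morphism $\pi\colon M\to|L|$, writing $\pi=\sigma\circ\pi'$ with $\pi'\colon M\to S$ having geometrically connected fibers and $\sigma\colon S\to|L|$ finite. By condition $(\mathbf{A}_3)$ a general $C\in|L|$ is smooth and integral of arithmetic genus $g_L$, and the fiber $\pi^{-1}([C])$ is then identified with $\mathrm{Pic}^{g_L-1}(C)$, which is irreducible. Hence the generic fiber of $\pi$ is connected, forcing $\sigma$ to be birational; a finite birational morphism onto the normal, integral scheme $|L|\cong\mathbb{P}^l$ is an isomorphism, so every fiber of $\pi$ is connected. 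Combined with connectedness of $|L|$ and surjectivity of $\pi$ (immediate from $(\mathbf{A}_3)$, which produces line bundles of degree $g_L-1$ on a smooth integral curve, and then properness plus dimension), this gives $M$ connected.

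The one step requiring genuine care is the GIT-theoretic reduction from $\Omega_u$ to $M$: one must verify that the group acting on the Quot-scheme is connected and that $\phi_u$ is an honest principal bundle precisely over $M^s$, so that connectedness propagates faithfully between $M^s$ and $\phi_u^{-1}(M^s)$. Given these (standard, but worth stating explicitly), the Stein-factorization half of the argument becomes essentially formal once the generic fiber of $\pi$ is identified with a Jacobian via $(\mathbf{A}_3)$.
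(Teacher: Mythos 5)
Your overall architecture (reduce irreducibility of $\omu$ to connectedness via smoothness, transfer connectedness between $M^s$ and $\Omega_u^s$ via the principal bundle, and use connectedness of the fibre $\mathrm{Pic}^{g_L-1}(C)$ over a smooth curve) is compatible with the paper's strategy, but both halves of your argument have a gap, and the Stein factorization half has the serious one. Connectedness of the fibre of $\pi$ over a \emph{general} point of $\ls$ only tells you that $\sigma\colon S\to\ls$ has degree one over the generic point, i.e.\ that the union of those components of $M$ which dominate $\ls$ is connected. It does not force $\sigma$ to be birational: if $S$ (equivalently $M$) has an extra connected component mapping onto a proper closed subset of $\ls$, the general fibre of $\pi$ simply never meets it, so your hypothesis is blind to it and $\sigma$ fails to be an isomorphism. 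This is not a vacuous worry here: by Lemma \ref{zdd} the fibres of $\pi$ over non-integral curves may have dimension strictly larger than $g_L$, so a component of the correct dimension $l+g_L$ sitting over the codimension-$\geq 2$ locus of non-integral curves is not excluded by dimension counting on $M$ alone. The paper's proof supplies exactly the missing ingredient: it works on $\omu^{sm}$, which is dense by Lemma \ref{lep} and on which $\pi\circ\phi_u$ is \emph{smooth} by Le Potier's result, so the preimage of any proper closed subset of $\ls$ has dimension strictly less than $\dim\omu$; since $\omu$ is smooth and equidimensional, no component can lie there, and connectedness of the preimage of the smooth-curve locus then forces connectedness of $\omu^{sm}$.

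The gap in the reduction step is that the preimage under a good quotient of a codimension-$2$ subset need not have codimension $2$: over a strictly semistable point the fibre of $\phi_u$ is a union of non-closed orbits (all quotients $S$-equivalent to the given polystable sheaf, including positive-dimensional families of extensions) and can be strictly larger than the fibres over stable points, so ``fibres of bounded dimension'' does not yield density of $\phi_u^{-1}(M^s)=\Omega_u^s$ in $\omu$. What you actually need is that no component of $\omu$ consists entirely of strictly semistable quotients, and that again requires an argument of the type the paper makes; the paper avoids the issue altogether by never routing through $M^s$, comparing $\omu^{sm}$ with $\omu$ directly and arguing on the Quot scheme.
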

\begin{proof}Since $\omu^{sm}$ is dense in $\omu$ by Lemma \ref{lep},  it is enough to show that $\omu^{sm}$ is irreducible.  Since $\omu^{sm}$ is smooth,  it is enough to show that it is connected.  We assume that $\omu^{sm}=U_1\cup U_2$ with $U_1$ and $U_2$ are open and $U_1\cap U_2=\emptyset.$  Let $V\subset \ls$ be the open set parametrizing smooth curves.  And obviously $U:=(\pi\circ\phi_u)^{-1}(V)$ is connected and open in $\omu^{sm}.$  Hence $U$ is contained in $U_1$ or $U_2.$  Let $U$ be contained in $U_1,$  then $U_2$ is contained in the preimage of $\ls-V$ in $\omu^{sm}.$  On the other hand,  $\pi\circ\phi_u$ restricted on $\omu^{sm}$ is smooth hence the preimage of $\ls-V$ in $\omu^{sm}$ has dimension less than the dimension of $\omu$.  But $\omu$ is smooth and equidimensional and every nonempty open subscheme of it has the same dimension as it.  Hence we know that $U_2$ has to be empty and $\omu^{sm}$ is connected and hence the lemma.    
\end{proof}
\begin{coro}\label{irred}The moduli space $M$ is irreducible.
\end{coro}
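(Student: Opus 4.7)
The plan is essentially a one-line reduction to the preceding lemma. The morphism $\phi_u \colon \Omega_u \to M$ in diagram (\ref{quotch}) is a good quotient by a reductive group $G$, and in particular it is surjective. By Lemma \ref{oirred}, $\Omega_u$ is irreducible. Since the image of an irreducible topological space under a continuous surjective map is irreducible, I would conclude immediately that $M = \phi_u(\Omega_u)$ is irreducible.

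There is essentially no obstacle here: all the work was done in proving Lemma \ref{oirred}, where one had to show connectedness of $\Omega_u^{sm}$ by using that the preimage of the locus of smooth curves in $|L|$ under $\pi \circ \phi_u$ is connected and dense, together with the smoothness of $\pi \circ \phi_u$ on $\Omega_u^{sm}$ from Le Potier. Once that lemma is in hand, the corollary is a formality, and I would write the proof as a two-sentence argument invoking surjectivity of the good quotient and Lemma \ref{oirred}.
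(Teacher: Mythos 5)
Your proof is correct and matches the paper's intent: the corollary is stated there without proof precisely because it follows immediately from Lemma \ref{oirred}, since $M$ is the image of the irreducible scheme $\Omega_u$ under the surjective good quotient $\phi_u$, and the continuous image of an irreducible space is irreducible.
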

\begin{rem}\label{chione}For any integer $n$,  the stable locus $M^s_n$ of $M_n$ (as defined in Remark \ref{chionesm}) is irreducible. 
\end{rem}
We now study the dualizing sheaf on $M$.

\begin{prop}\label{dualizing}Let $\omega$ be the canonical line bundle on $M^s$,  then $\omega\simeq(\pi^s)^{*}\mo_{\ls}(1)^{\otimes L.K},$  with $\pi^s$ obtained by composing the open embedding from $M^s$ to $M$ with $\pi.$  Moreover,  the dualizing sheaf on $M$ is locally free and isomorphic to $\pi^{*}\mo_{\ls}(1)^{\otimes L.K}.$
\end{prop}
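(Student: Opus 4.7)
The plan is to work outward from the smoothest open piece, namely $M^{sm}\subset M^s\subset M$, handling each enlargement by a different mechanism: Le Potier's smoothness of $\pi|_{M^{sm}}$ gives the formula on $M^{sm}$, a codimension-two argument extends it across $M^s$, and Cohen--Macaulayness of $M$ lets the dualizing sheaf be identified reflexively on the whole moduli space.

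Step 1, on $M^{sm}$: By \cite{lee}, Proposition 2.9, the restriction $\pi|_{M^{sm}}:M^{sm}\to\ls$ is smooth, so
\[\omega_{M^{sm}}\cong \omega_{M^{sm}/\ls}\otimes (\pi|_{M^{sm}})^{*}\mo_{\ls}(-l-1).\]
Over the dense open in $\ls$ parametrizing smooth curves, a fiber of $\pi$ is the degree $g_L-1$ component of the Jacobian of a smooth curve, which is a torsor under an abelian variety and hence has trivial canonical bundle. It follows (by cohomology and base change applied to $\pi_{*}\omega_{M^{sm}/\ls}$ over this open subset, combined with $\mathrm{Pic}(\ls)=\mathbb{Z}\cdot\mo_{\ls}(1)$) that $\omega_{M^{sm}/\ls}\cong (\pi|_{M^{sm}})^{*}\mo_{\ls}(k)$ for some integer $k$.

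Step 2, pinning down $k$: The claim $\omega_{M^{sm}}\cong (\pi|_{M^{sm}})^{*}\mo_{\ls}(L\cdot K)$ is equivalent to $k=l+1+L\cdot K$, which by Riemann--Roch and the adjunction formula on the surfaces of Examples \ref{exmone}-\ref{exmtwo} is just $k=g_L$. I would establish this either by (a) restricting to a generic Lefschetz pencil $\pone\hookrightarrow\ls$ of smooth curves and applying Mumford's relation for the degree of the determinant of the relative tangent bundle of the associated family of Jacobians over $\pone$, or (b) working on $\omu$ with the universal sheaf $\me$ and using Grothendieck--Riemann--Roch to identify $\omega_{M^{sm}}$ with the descent of a determinant line bundle $\tilde{\lambda}([K_X])$, whose class on $\ls$ can be read off using Lemma \ref{ngs} and Proposition \ref{jlp}.

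Step 3, extending to $M^s$ and $M$: By Lemma \ref{lep}, $\omu-\omu^{sm}$ has codimension $\geq 2$ in $\omu$, and since $\phi_u$ is a geometric quotient on the stable locus, $M^s-M^{sm}$ also has codimension $\geq 2$ inside the smooth scheme $M^s$ (Lemma \ref{mainzero}). Line bundles extend uniquely across such loci on a smooth scheme, so $\omega_{M^s}\cong (\pi^s)^{*}\mo_{\ls}(L\cdot K)$. Finally, $\pi^{*}\mo_{\ls}(L\cdot K)$ is a genuine line bundle on the normal scheme $M$ (hence reflexive) that agrees with $\omega_{M^s}$ on the open $M^s$. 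Since $M-M^s$ has codimension $\geq 2$ (Lemma \ref{mainze}) and $M$ is Cohen--Macaulay (Lemma \ref{mainzero}), the dualizing sheaf $\omega_M$ is itself reflexive and therefore $\omega_M\cong j_{*}\omega_{M^s}\cong \pi^{*}\mo_{\ls}(L\cdot K)$, where $j:M^s\hookrightarrow M$ is the inclusion. In particular $\omega_M$ is locally free.

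The principal obstacle is Step 2: the codimension-two extensions in Steps 1 and 3 are largely formal once normality and Cohen--Macaulayness are in hand, but pinning down the twist $k$ requires genuine global information about the relative compactified Jacobian $\pi$. Option (a) demands care about base points and singular fibers of the pencil, and option (b) demands careful bookkeeping of how the canonical bundle matches a determinant line bundle across the GIT quotient $\phi_u$ and the local-to-global spectral sequence for $\mathrm{Ext}^{*}_X(\mf,\mf)$ from $\mathrm{Ext}^{*}_C(\mf,\mf)$, which in the non-locally-free range is delicate.
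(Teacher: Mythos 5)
Your overall skeleton coincides with the paper's: identify the canonical bundle on the smooth part, extend across codimension~$\geq 2$ to $M^s$, and then use normality, Cohen--Macaulayness and Lemma \ref{mainze} to identify the dualizing sheaf on $M$ with $j_{*}\omega_{M^s}$. Step 3 is essentially the paper's first paragraph and is fine (the paper descends to $M^s$ through the equivariant Picard group of the principal $PG$-bundle $\omu^s\to M^s$ rather than quoting preservation of codimension, but both work). The problem is that the substance of the proposition lives entirely in your Step 2, and there you only name two candidate strategies without executing either; you say so yourself. Option (b) is in fact what the paper does: it shows $(\phi^s_u)^{*}\mathcal{T}_{M^s}\simeq\mathcal{E}xt^1_p(\me,\me)$, reduces $\det\mathcal{E}xt^1_p(\me,\me)$ to $\det R^{\bullet}(p\circ\mathcal{H}om(\me,\me))$ using the vanishing of $\mathcal{E}xt^i_p$ for $i\geq 2$ and the triviality of $\mathcal{E}xt^0_p$, and then computes this determinant in Proposition \ref{qutang} by viewing $\me$ as a line bundle on the universal curve $\mc^{\Omega}\subset X\times\omu^{sm}$, applying coherent duality to replace $[R^{\bullet}\mathcal{H}om(i_{*}\me,i_{*}\me)]$ by $[R^{\bullet}\mathcal{H}om(i_{*}\mo_{\mc^{\Omega}},i_{*}\mo_{\mc^{\Omega}})]$ in K-theory, and finishing with the Koszul resolution $0\to q^{*}\mo_X(-L)\otimes p^{*}\mo_{\ls}(-1)\to\mo\to\mo_{\mc^{\Omega}}\to 0$. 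None of this bookkeeping is present in your proposal, and it is exactly the part you flag as delicate; as written the proof is therefore incomplete at its central point.

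There is also a smaller gap in Step 1. Triviality of $\omega_{M^{sm}/\ls}$ on fibers over the locus $V$ of smooth curves only gives $\omega_{M^{sm}/\ls}\simeq\pi^{*}\mo_{\ls}(k)$ over $\pi^{-1}(V)$; since $\ls\setminus V$ is a divisor (the discriminant), not of codimension $2$, two line bundles on $M^{sm}$ agreeing on $\pi^{-1}(V)$ may still differ by a divisor supported on $\pi^{-1}(\ls\setminus V)$, and you would need an extra argument (e.g.\ irreducibility of that preimage, so that the discrepancy is again $\pi^{*}$ of a power of the discriminant) to conclude the pullback statement on all of $M^{sm}$. The paper sidesteps this entirely by computing the determinant line bundle globally on $\omu^{sm}$ rather than first restricting to smooth curves. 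I would recommend abandoning route (a) (Lefschetz pencils of Jacobians, with all the attendant issues at singular and non-integral members) and carrying out route (b) in full, at which point your argument becomes the paper's.
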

\begin{proof}Since $M$ is normal,  the dualizing sheaf on $M$ is the push forward of the canonical bundle on its smooth locus.  On the other hand $M$ is Cohen-Macaulay and $M-M^s$ is of codimension $\geq2.$  Hence if $\omega\simeq(\pi^s)^{*}\mo_{\ls}(1)^{\otimes L.K},$  then the push forward of $\omega$ to $M$ is the dualizing sheaf on $M$ and isomorphic to $\pi^{*}\mo_{\ls}(1)^{\otimes L.K}.$

It will suffice to prove that $det~(\mathcal{T}_{M^s})=(\pi^s)^{*}\mo_{\ls}(-1)^{\otimes L,K}$,  where $\mathcal{T}_{M^s}$ is the tangent bundle on $M^s.$
   
Restrict the quotient in (\ref{quotch}) on $M^s$ and we get
\begin{equation}\label{quotchi}
\xymatrix{
\Omega_u^s\ar[r]^{\phi^s_u}&M^s.}
\end{equation}
Since $\phi_{u}^s$ is a principal $PG$-bundle ,  we have $(\phi^s_u)^{*}:Pic~(M^s)\ra Pic^{PG}(\Omega_{u}^s)$ is an isomorphism.  And also because there is no surjective homomorphism from $PG$ to $\mathbb{G}_m,$  the natural morphism $Pic^{PG}(\Omega_{u}^s)\ra Pic (\Omega_{u}^s)$ is injective (\cite{git} Chap 1,  Section 3,  Proposition 1.4).  Hence it is enough to prove that 
\begin{equation}\label{tais}det~(\phi_u^s)^{*}\mathcal{T}_{M^s}\simeq(\phi_{u}^s)^{*}(\pi^s)^{*}\mo_{\ls}(-1)^{\otimes L.K}.\end{equation}

Because of Lemma \ref{lep},  we know that it is enough to show the isomorphism in (\ref{tais}) restricted on $\omu^{sm}\cap\omu^s$,  i.e.  we prove that 
\[det~(\phi_{u}^{s})^{*}\mathcal{T}_{M^{s}}|_{\omu^{sm}\cap\omu^s}\simeq(\phi_{u}^{s})^{*}(\pi^{s})^{*}\mo_{\ls}(-1)^{\otimes L.K}|_{\omu^{sm}\cap\omu^s}.\]

We have a universal sheaf on $X\times\Omega_{u}^{s}.$  We denote it as $\mathcal{E}$.  Then
\begin{equation}\label{ppq}
\xymatrix{
  \mathcal{E}  \ar[r]
                & X\times \Omega_{u}^{s} \ar[ld]^{q} \ar[d]^{p}  \\
                X & \Omega_{u}^{s}  
               }
\end{equation}

By Theorem 10.2.1 in \cite{dan},  we have
\[(\phi_{u}^{s})^{*}\mathcal{T}_{M^{s}}=\mathcal{E}xt_p^1(\mathcal{E},\mathcal{E}).\]  

For every closed point $m\in \Omega_{u}^s,$  we have Ext$^i(\mathcal{E}_m, \mathcal{E}_m)=0,$ for all $i\geq 2$ and Ext$^0(\mathcal{E}_m, \mathcal{E}_m)=\mathbb{C}.$  Hence $\mathcal{E}xt_p^0(\mathcal{E},\mathcal{E})=p_{*}\mathcal{H}om(\mathcal{E},\mathcal{E})$ is a line bundle on $ \omu^{s},$  and moreover isomorphic to $\mathcal{O}_{\Omega_{u}^s}$ because it has a global section non-vanishing everywhere.  $\mathcal{E}xt_p^i(\mathcal{E},\mathcal{E})=0,$  for all $i\geq2,$  because fiberwise they are Ext$^i(\mathcal{E}_m,\mathcal{E}_m).$  Therefore
\begin{equation}\label{bullet} [det~R^{\bullet}(p\circ\mathcal{H}om(\mathcal{E},\mathcal{E}))]=[det~\mathcal{E}xt_p^1(\mathcal{E},\mathcal{E})^{-1}].\end{equation}
By Proposition \ref{qutang} in Appendix B,  we know that 
\[det~R^{\bullet}(p\circ\mathcal{H}om(\mathcal{E},\mathcal{E}))|_{\omu^{sm}\cap\omu^s}\simeq(\phi_{u}^{s})^{*}(\pi^{s})^{*}\mo_{\ls}(-1)^{\otimes L.K}|_{\omu^{sm}\cap\omu^s}.\]
Hence
\[det~(\phi_{u}^{s})^{*}\mathcal{T}_{M^{s}}|_{\omu^{sm}\cap\omu^s}\simeq(\phi_{u}^{s})^{*}(\pi^{s})^{*}\mo_{\ls}(-1)^{\otimes L.K}|_{\omu^{sm}\cap\omu^s}.\]
And this finishes the proof of the lemma. 
\end{proof}

\begin{rem}\label{chion}Proposition \ref{dualizing} holds for $M_n$,  as long as $M_n-M_n^s$ is of codimension $\geq2$ in $M_n$. 
\end{rem}  
We then have a result in the theory of Compactified Jacobians of integral curves as a corollary of Proposition \ref{dualizing}.

\begin{coro}\label{trsame}Let $X$ and $L$ be as before,  i.e. as in Example \ref{exmone} and \ref{exmtwo}.  Let $C$ be any integral curve in $\ls$.  Then on the compactified Jacobian $\bar{J_d}$ which parametrizes pure sheaves of rank $1$ of degree $d$ on $C$,  the dualizing sheaf $\omega^0$ is trivial. 
\end{coro}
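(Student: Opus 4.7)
The plan is to realize $\bar{J_d}$ as a fiber of the support morphism $\pi:M\to|L|$ and then read off the triviality of its dualizing sheaf from Proposition \ref{dualizing}.

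First I would reduce to the case $d=g_L-1$. On an integral curve $C$, tensoring with any fixed line bundle of appropriate degree identifies $\bar{J}_d$ with $\bar{J}_{g_L-1}$, so we may assume $d=g_L-1$. With this normalization any pure rank-$1$ sheaf $\mathcal{F}$ on $C$ of degree $d$ has $\chi(\mathcal{F})=0$; since $C$ is integral every such $\mathcal{F}$ is automatically semistable when pushed forward to $X$ as a sheaf of class $u$. This identifies $\bar{J_d}$ with the scheme-theoretic fiber $\pi^{-1}([C])$.

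Next I would verify that $\pi$ is flat at the point $[C]$. Since $C$ is integral, $[C]\in|L|^{int}$, so the fiber $\pi^{-1}([C])$ has the expected dimension $g_L=\dim M-\dim|L|$. Because $M$ is Cohen–Macaulay (Lemma \ref{mainzero}) and $|L|$ is smooth, the standard miracle-flatness criterion applies and gives flatness of $\pi$ on a neighborhood of $[C]$. This also shows that $\bar{J_d}=\pi^{-1}([C])$ is itself Cohen–Macaulay, so it has a well-behaved dualizing sheaf $\omega^0$ obtained by the usual adjunction.

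Then I would invoke relative duality: with $\pi$ flat, $M$ Cohen–Macaulay and $|L|$ smooth in a neighborhood of $[C]$, one has
\[
\omega^0 \;\cong\; \omega_M|_{\bar{J_d}} \otimes \bigl(\pi^{*}\omega_{|L|}\bigr)^{-1}\big|_{\bar{J_d}}.
\]
Both factors on the right are trivial on $\bar{J_d}$: the first because Proposition \ref{dualizing} gives $\omega_M\simeq\pi^{*}\mathcal{O}_{|L|}(1)^{\otimes L.K}$, whose restriction to any fiber of $\pi$ is $\mathcal{O}_{\bar{J_d}}$; the second because $\pi$ contracts $\bar{J_d}$ to the single point $[C]$. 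Hence $\omega^0\simeq\mathcal{O}_{\bar{J_d}}$.

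The main obstacle will be justifying the adjunction formula cleanly, since $\bar{J_d}$ is in general singular (it is smooth only when $C$ is itself smooth). The right framework is flat base change for dualizing complexes applied to $\pi$ over a Cohen–Macaulay neighborhood of $[C]$; the miracle-flatness step above is what makes this legitimate without any smoothness hypothesis on the fiber itself.
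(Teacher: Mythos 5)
Your argument is correct and rests on the same pillar as the paper's proof, namely Proposition \ref{dualizing} plus the observation that any line bundle pulled back from $\ls$ restricts trivially to a fiber of $\pi$; what differs is how the adjunction is justified. The paper notes that the fiber over an integral $[C]$ lies entirely in the smooth locus $M^s$ (stability is automatic for pure rank-one sheaves of the right Euler characteristic on an integral curve) and is cut out there, in the expected codimension $l$, by $l$ divisors from $|\pi^{*}\mo_{\ls}(1)|$; iterated divisor adjunction inside the smooth variety $M^s$ then gives $\omega^0\simeq\bigl(\omega_{M^s}\otimes\pi^{*}\mo_{\ls}(l)\bigr)\big|_{\pi^{-1}([C])}$, which is trivial. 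You instead prove flatness of $\pi$ near $[C]$ by miracle flatness (essentially reproving the flatness part of Lemma \ref{sublf}) and invoke base change for relative dualizing sheaves of flat Cohen--Macaulay morphisms, obtaining $\omega^0\simeq\bigl(\omega_{M}\otimes\pi^{*}\omega_{\ls}^{-1}\bigr)\big|_{\pi^{-1}([C])}$; the two expressions differ by $\pi^{*}\mo_{\ls}(1)\big|_{\pi^{-1}([C])}$, which is trivial, so there is no conflict. Your route needs heavier duality machinery but makes no use of the smooth locus; the paper's route is more elementary once one observes that the fiber sits inside $M^s$ --- an observation that would also dissolve the ``main obstacle'' you flag at the end, since the complete-intersection presentation in a smooth ambient variety handles the singular fibers with no appeal to dualizing complexes.
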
  
\begin{proof}For any integral curve $C$ in $\ls,$  the fiber of $\pi$ over $[C]$ is a complete intersection by $l$ divisors in $|\pi^{*}\mo_{\ls}(1)|$ in the smooth locus $M^s$ and it is isomorphic (not canonically) to the compactified Jacobian $\bar{J_d}$ of $C.$  Hence by Proposition \ref{dualizing} we have the lemma.
\end{proof}

\begin{rem}It has been proven by Altman, Kleiman and Iarrobino that on the compactified Jacobian of an integral locally planar curve,  the dualizing sheaf is invertible.  But in general it is not known whether it is trivial. 
\end{rem}

\subsection{Positive genus cases and $r=1$.}
Let $M$ and $\z$ be the same as before.   
Then we have
\begin{thm}\label{main}$1,$ $R^1\pi_{*}\z^r=0,$  for all $r>0.$

$2,$ For all $r>0,$  $\pi_{*}\z^r$ is torsion-free on $\ls,$  and locally free of rank $r^{g_L}$ on $\ls^{int}$.  In particular when $r=1,$  $\pi_{*}\z\simeq \mo_{\ls},$  hence $\pi_{*}\z$ is locally free on the whole linear system $\ls$.
\end{thm}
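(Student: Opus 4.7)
The plan is to establish the three assertions in turn, with the special case $r=1$ deduced at the end from the general structure.

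For Part $1$, I would invoke relative Kawamata--Viehweg vanishing. By Proposition~\ref{ample}, $\z^r$ is $\pi$-ample, hence $\pi$-nef and $\pi$-big. By Proposition~\ref{dualizing}, $M$ is Gorenstein with $\omega_M\simeq \pi^{*}\mo_{\ls}(L.K)$, a pullback from the base. Since $M$ is a good quotient of the smooth Quot-scheme $\omu$ by a reductive group, Boutot's theorem (already invoked in Lemma~\ref{mainzero}) gives $M$ rational singularities. Passing to a resolution $\rho\colon\tilde M\to M$ and using Grauert--Riemenschneider ($R\rho_{*}\omega_{\tilde M}=\omega_M$), together with the standard relative KV on the smooth $\tilde M$ applied to the $(\pi\rho)$-nef and $(\pi\rho)$-big bundle $\rho^{*}\z^r$, yields $R^i\pi_{*}(\omega_M\otimes\z^r)=0$ for $i>0$. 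The projection formula converts this to $\mo_{\ls}(L.K)\otimes R^i\pi_{*}\z^r=0$, so $R^i\pi_{*}\z^r=0$ for all $i>0$, in particular $R^1\pi_{*}\z^r=0$.

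For Part $2$, torsion-freeness is immediate from Corollary~\ref{irred}: any local section $s\in\Gamma(U,\pi_{*}\z^r)=\Gamma(\pi^{-1}(U),\z^r)$ killed by some non-zero $f\in\mo_{\ls}(U)$ satisfies $\pi^{*}f\cdot s=0$ in the line bundle $\z^r$ on the irreducible open $\pi^{-1}(U)$; since $\pi$ is surjective, $\pi^{-1}(V(f))$ is a proper closed subset, so $s$ vanishes on a dense open of an irreducible space, hence $s=0$. For the local freeness and rank on $\ls^{int}$, equidimensionality of fibers combined with $M$ Cohen--Macaulay (Lemma~\ref{mainzero}) and $\ls^{int}$ smooth gives flatness of $\pi$ over $\ls^{int}$ by miracle flatness. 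The vanishing from Part $1$ and cohomology-and-base-change then show $\pi_{*}\z^r$ is locally free on $\ls^{int}$ of rank equal to $h^0$ on any fiber. I would evaluate this at a smooth integral $[C]\in\ls^{int}$ (which exist densely by $(\ha_3)$): the fiber is $\mathrm{Jac}(C)$, an abelian variety of dimension $g_L$ on which $\z$ restricts to a principal polarization, giving $h^0(\z^r|_{\mathrm{Jac}(C)})=r^{g_L}$ by Riemann--Roch for abelian varieties.

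For the case $r=1$, the sheaf $\pi_{*}\z$ is torsion-free of rank one on $\mathbb{P}^l$, so its reflexive hull is some line bundle $\mo_{\ls}(k)$. The natural section of $\z$ from Lemma~\ref{ngs}, combined with $\pi_{*}\mo_M=\mo_{\ls}$ (from Stein factorization: $\pi$ is proper with integral generic fibers over the normal base $\ls$), produces a nonzero morphism $\mo_{\ls}\to\pi_{*}\z$. Restricted to the open dense locus of integral curves inside $\ls^{int}$, the induced section of $\mo_{\ls}(k)$ is fiberwise non-vanishing, since at integral $[C]$ its restriction to $\bar J_d(C)$ is the classical theta section, whose zero locus is a proper divisor. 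A section of $\mo_{\ls}(k)$ non-vanishing on an open whose complement has codimension $\geq 2$ (using $(\ha_3)$) forces $k=0$. We then obtain inclusions $\mo_{\ls}\to\pi_{*}\z\hookrightarrow(\pi_{*}\z)^{\vee\vee}=\mo_{\ls}$ whose composition is a nonzero constant, forcing both arrows to be isomorphisms, so $\pi_{*}\z\simeq\mo_{\ls}$.

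The main technical obstacle I anticipate is making Part $1$ fully rigorous: standard Kodaira vanishing requires smoothness, while $M$ is only normal Cohen--Macaulay, and $\z^r$ is only $\pi$-ample rather than globally ample. The key ingredients that make the argument go through are Boutot's rational-singularity theorem (to reduce to a smooth resolution) and the Gorenstein conclusion of Proposition~\ref{dualizing} (which allows $\omega_M$ to be cleanly inverted via the projection formula since it is pulled back from $\ls$). A direct fiberwise approach would instead require controlling $H^i$ of theta powers on singular compactified Jacobians as well as on the irregular fibers over $\ls\setminus\ls^{int}$, which seems more delicate.
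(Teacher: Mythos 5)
Your proof is correct, but it takes a genuinely different route from the paper's at several points, and the comparison is instructive. For Statement $1$ the paper stays absolute: it runs the low-degree Leray exact sequence $H^1(M,\z^r(s))\to H^0(\ls,R^1\pi_{*}\z^r\otimes\mo_{\ls}(s))\to H^2(\ls,\pi_{*}\z^r\otimes\mo_{\ls}(s))$, kills the outer terms for $s\gg0$ (a Kodaira-type vanishing on $M$ via Proposition \ref{dualizing} and Proposition \ref{ample}, and Serre vanishing on $\mathbb{P}^l$), and concludes $R^1\pi_{*}\z^r=0$ because a coherent sheaf on $\mathbb{P}^l$ with no sections after all large twists is zero. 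Your relative Kawamata--Viehweg argument uses the same underlying inputs (Boutot's theorem, the pulled-back dualizing sheaf of Proposition \ref{dualizing}, relative ampleness of $\z^r$) but packaged relatively; it buys the stronger conclusion $R^i\pi_{*}\z^r=0$ for all $i>0$, which you then legitimately feed into the cohomology-and-base-change step of Statement $2$ --- note that with only $R^1=0$ that descending induction would not close, so your stronger Part $1$ is actually load-bearing. For Statement $2$, the paper's torsion-freeness goes through $\z^r\simeq j_{*}j^{*}\z^r$ via cohomology with supports and Lemma \ref{codm}, whereas your one-line argument from irreducibility of $M$ (Corollary \ref{irred}) is simpler and bypasses Lemma \ref{codm} entirely; the local-freeness and rank computation coincide with Lemma \ref{sublf}, except that the paper obtains the fiberwise vanishing of higher cohomology by restricting the absolute vanishing $H^i(\z^r(s))=0$ to complete intersections of divisors in $|\pi^{*}\mo_{\ls}(1)|$ rather than by base change from $R^i\pi_{*}\z^r=0$. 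For $r=1$ the paper trivializes $\pi^{int}_{*}j^{*}\z$ directly by exhibiting a nowhere-vanishing section on all of $\ls^{int}$ (which requires Lemma \ref{zdd} to see that the theta section is not identically zero on every $g_L$-dimensional fiber) and then extends across the codimension-two complement by normality; your reflexive-hull argument on $\mathbb{P}^l$ reaches the same conclusion using only the integral locus and $(\ha_3)$, together with your own Stein-factorization proof of $\pi_{*}\mo_M\simeq\mo_{\ls}$ (which you correctly do not borrow from Lemma \ref{sur}, as that lemma presupposes the statement being proved). Both routes are sound; yours is more self-contained on Statement $2$ and yields a stronger Statement $1$, at the cost of invoking the heavier relative vanishing machinery on a resolution of the singular $M$.
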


Statement $2$ of Theorem \ref{main} implies that for all $g_L>0$
\[Z^1(t)=\sum_{n}h^0(M,\lambda_{c^1_n})t^n=\frac{1}{(1-t)^{l+1}}.\]
This coincides with the formula on the side of rank $1$ sheaves.  Thus we have a corollary.
\begin{coro}Let $X$ and $L$ be as in Example \ref{exmone} and \ref{exmtwo}.  Let $u=(0,L,\chi(u)=0)$ and $c_n=(1,0,n)$,  then we have for all $n\geq0$
\[h^0(M(c_n),\lu)= h^0(M(u),\lambda_{c_n}).\]
And moreover the morphism $D$ in (\ref{sdm}) is an isomorphism.
\end{coro}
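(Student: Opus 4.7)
The plan is to compute both sides of the equality, show they have equal dimension, and then promote the dimension equality to the isomorphism of $D$ by identifying the divisor $D_\lambda$ explicitly.

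For the side $M(u)$, Theorem \ref{main} applies directly: since $r=1$, part $2$ gives $\pi_{*}\z\cong\mo_{\ls}$. Combining this with $\lambda_{c_n}\cong\z\otimes\pi^{*}\mo_{\ls}(n)$ and the projection formula yields $\pi_{*}\lambda_{c_n}\cong\mo_{\ls}(n)$, and therefore
\[h^0(M(u),\lambda_{c_n})=h^0(\ls,\mo_{\ls}(n))=\binom{n+l}{l},\]
with $l=\dim\ls$. Moreover, this identifies $H^0(M(u),\lambda_{c_n})$ canonically with $\mathrm{Sym}^n H^0(X,L)^{\vee}$.

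For the side $M(c_n)$, one identifies $M(c_n)\simeq\mathrm{Hilb}^n(X)$ via the standard correspondence between rank-one torsion-free sheaves of the given class and ideal sheaves of length-$n$ subschemes. A direct Chern-class computation using the defining diagram (\ref{dlb}) applied to the universal ideal sheaf on $\mathrm{Hilb}^n(X)\times X$ shows that $\lambda_u$ is isomorphic to the line bundle $\mathcal{D}_L$ descended from $L^{\boxtimes n}$ on $X^n$ via the Hilbert--Chow morphism $\mathrm{Hilb}^n(X)\to S^n X$. Since $L$ is globally generated for $X$ and $L$ as in Examples \ref{exmone} and \ref{exmtwo}, a standard computation (as in \cite{nila}) gives $H^0(\mathrm{Hilb}^n(X),\mathcal{D}_L)\cong \mathrm{Sym}^n H^0(X,L)$, of dimension $\binom{n+l}{l}$. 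Hence the two dimensions agree.

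To upgrade this to $D$ being an isomorphism, I would identify $D_\lambda\subset M(u)\times M(c_n)$ with the pullback of a classical incidence divisor. On the dense open locus where $\mathcal{F}\in M(u)$ is a line bundle on a smooth integral curve $C\in\ls$ and $I_Z$ is the ideal sheaf of a reduced length-$n$ subscheme $Z$ disjoint from the base locus of $L$, the condition $h^0(\mathcal{F}\otimes I_Z)\neq 0$ reduces to $Z\cap C\neq\emptyset$. Thus $D_\lambda$ and the pullback via $\pi\times\mathrm{id}$ of the incidence divisor $\Delta=\{([C],Z):C\cap Z\neq\emptyset\}\subset\ls\times\mathrm{Hilb}^n(X)$ coincide set-theoretically on a dense open. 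Both divisors represent the class $\lambda_{c_n}\boxtimes\lambda_u$, and since $M(u)\times M(c_n)$ is irreducible (by Corollary \ref{irred} and irreducibility of $\mathrm{Hilb}^n(X)$), the two divisors coincide. The morphism $D$ is thereby identified with the natural perfect pairing $\mathrm{Sym}^n H^0(X,L)\otimes \mathrm{Sym}^n H^0(X,L)^{\vee}\to\mathbb{C}$, which is non-degenerate, so $D$ is an isomorphism. The main obstacle is the scheme-theoretic identification of $\lambda_u$ on $\mathrm{Hilb}^n(X)$ with $\mathcal{D}_L$ via (\ref{dlb}) and the matching of the section cutting out $D_\lambda$ with the one cutting out the pullback of $\Delta$; once the line bundle classes and canonical sections are correctly matched, the non-degeneracy of the pairing is automatic.
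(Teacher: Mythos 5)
Your dimension count is fine and follows the paper's route (Theorem \ref{main} for the $M(u)$ side, the known Hilbert-scheme computation for the $M(c_n)$ side), but the argument that $D$ is an isomorphism contains a genuine error. The claim that for $Z$ reduced and away from the base locus the condition $h^0(\mf\otimes I_Z)\neq0$ "reduces to $Z\cap C\neq\emptyset$" is false: if $Z\cap C=\emptyset$ then $\mf\otimes I_Z\simeq\mf$, so the condition becomes $h^0(\mf)\neq0$, i.e. $\mf\in D_{\z}$. The correct equivalence is the paper's (\ref{rankis}): $h^0(\mf\otimes I_x)\neq0$ iff $h^0(\mf)\neq0$ \emph{or} $x\in C_{\mf}$. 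Consequently $D_{\lambda}$ is not the pullback of the incidence divisor $\Delta$; set-theoretically it is the union of $D_{\z}\times M(c_n)$ with the incidence locus, and the two divisors do not even lie in the same class: restricted to $M(u)\times\{Z\}$ the incidence pullback has class $\pi^{*}\mo_{\ls}(n)$, whereas $\lambda_{c_n}\simeq\z\otimes\pi^{*}\mo_{\ls}(n)$. So the step "both divisors represent the class $\lambda_{c_n}\boxtimes\lu$, hence coincide" fails, and with it the identification of $D$ with a nondegenerate pairing.

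The correct picture is that the section of $\lambda_{c_n}$ cutting out $D_{I_Z}=\{[\mf]:h^0(\mf\otimes I_Z)\neq0\}$ is the product of the canonical theta section of $\z$ (Lemma \ref{ngs}) with the $n$ pulled-back hyperplane sections of $\pi^{*}\mo_{\ls}(1)$ determined by the points of $Z$. Hence the image of the transpose of $D$ is contained in the image of the multiplication map $H^0(M,\z)\otimes H^0(M,\pi^{*}\mo_{\ls}(1))^{\otimes n}\to H^0(M,\z(n))$, and injectivity of $D$ amounts to surjectivity of this map. That surjectivity is not automatic and is exactly where the paper uses $\pi_{*}\z\simeq\pi_{*}\mo_M\simeq\mo_{\ls}$ (Statement 2 of Theorem \ref{main}, packaged as Lemma \ref{sur}), which makes multiplication by the theta section an isomorphism $H^0(\pi^{*}\mo_{\ls}(n))\to H^0(\z(n))$. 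Once you reinstate the theta factor and supply this surjectivity, your strategy closes up; as written, the gap sits precisely at the point where the $h^0(\mf)\neq0$ alternative was dropped.
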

\begin{proof}To show $D$ is an isomorphism,  it is enough to show that it is injective.  We choose a collection of generators $\{s_i\}$ of $H^0(M,\lambda_{c_n}),$  and we then show that we can find a collection of ideal sheaves $\{I_i\}$ with $[I_i]\in M(c_n)$ for all $i,$  such that for each $i$ the divisor $D_i=\{[\mf]\in M|h^0(\mf\otimes I_i)\neq0\}$ is the zero set of $s_i.$  This will suffice to prove the injectivity of $D.$

Denote $x$ to be a single point in $X$,  let $I_x$ be the ideal sheaf of $x.$  For any $[\mf]\in M$ we have
\[0\ra Tor^1(\mf,\mo_{x})\ra \mf\otimes I_x\ra\mf\ra\mf\otimes \mo_{x}\ra0.\]

Denote $C_{\mf}$ to be the supporting curve of $\mf.$  Notice that:  if $x\not\in C_{\mf}$,  then $Tor^1(\mf,\mo_{x})=0$ and $\mf\otimes I_x\simeq\mf;$  if $x\in C_{\mf},$  then $Tor^1(\mf,\mo_{x})$ is of zero dimension and hence has nonzero global sections.  Hence we know that
\begin{equation}\label{rankis}h^0(\mf\otimes I_x)\neq0\Leftrightarrow h^0(\mf)\neq0~or~x\in C_{\mf}. \end{equation}

Given a point $x\in X$ which is not a base point of $\ls,$  we can define a hyperplane in $\ls$ by asking curves to pass through $x.$  We choose $l+1$ points $x_0,\ldots,x_l\in X,$  such that the corresponding hyperplanes $P_i$ intersect transversely.  Then sections induced by $P_i$'s generate $H^0(\ls,\mo_{\ls}(1))$ and we say that the collection $\Gamma=\{x_0,\ldots,x_l\}$ is regular. 

For $\z(n)$ with $n\geq1,$  we choose $n$ regular collections $\Gamma_i=\{x^i_0,\ldots,x_l^i,\}$ for $1\leq i\leq n,$  such that $\Gamma_i\cap\Gamma_j=\emptyset$ if $i\neq j$.   We take a collection of ideal sheaves $\{I_{\underline{x}}\}$ with $\underline{x}=\{x^1_{i_1},\ldots,x^n_{i_n}\}  .$  We can see that $\#\{I_{\underline{x}}\}=(l+1)^n.$  And denote $s_{\underline{x}}$ to be the section induced by the divisor $D_{\underline{x}}=\{[\mf]\in M|h^0(\mf\otimes I_{\underline{x}})\neq0\}.$ 

We have the following morphism $m$ defined by multiplication
\[m:H^0(M,\z)\otimes H^0(M,\pi^{*}\mo_{\ls}(1))^{\otimes n}\ra H^0(M,\z(n)).\]

Because of (\ref{rankis}),  we know that $s_{\underline{x}}$'s generate the image of $m.$  Hence they generate $H^0(M,\z(n))$ because of Lemma \ref{sur}. 

And hence we have proven the corollary.
\end{proof}

\begin{lemma}\label{sur}$\pi_{*}\mo_M\simeq \pi_{*}\z\simeq\mo_{\ls}$ and the following morphism $m_1$ defined by multiplication is surjective for any $n$
\begin{equation}\label{mul}m_1:H^{0}(M,\pi^{*}\mo_{\ls}(1))\otimes H^0(M,\z(n))\ra H^0(M,\z(n+1)).\end{equation}
\end{lemma}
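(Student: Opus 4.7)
The plan is to establish the sheaf-level identification $\pi_{*}\mo_M\simeq\mo_{\ls}$ first (the other half $\pi_{*}\z\simeq\mo_{\ls}$ being already supplied by Theorem \ref{main}(2)), and then combine the two via the projection formula to reduce the surjectivity of $m_1$ to the classical surjectivity of multiplication of global sections on the projective space $\ls$.

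First I would establish $\pi_{*}\mo_M\simeq\mo_{\ls}$ by Stein factorization. The morphism $\pi$ factors as $\pi=g\circ f$ with $f:M\ra\mathrm{Spec}_{\ls}(\pi_{*}\mo_M)$ having geometrically connected fibers and $g$ finite. Over a generic smooth integral curve $C\in\ls^{int}$, which exists by condition $(\ha_3)$, the fiber of $\pi$ is the compactified Jacobian $\bar{J}(C)$, which is integral because $C$ is a smooth (in particular locally planar) curve on $X$. Hence the generic fiber of $\pi$ is connected, so $g$ is birational; and since $\ls\simeq\mathbb{P}^l$ is normal, $g$ must be an isomorphism, giving the desired identification.

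For the surjectivity of $m_1$, the key observation is that by the definition $\z(n)=\z\otimes\pi^{*}\mo_{\ls}(n)$, there is a canonical isomorphism $\pi^{*}\mo_{\ls}(1)\otimes\z(n)\simeq\z(n+1)$, under which $m_1$ is exactly the cup-product multiplication. Applying the projection formula together with $\pi_{*}\mo_M\simeq\pi_{*}\z\simeq\mo_{\ls}$ then identifies
\[
H^0(M,\pi^{*}\mo_{\ls}(1))\simeq H^0(\ls,\mo_{\ls}(1)),\qquad H^0(M,\z(n))\simeq H^0(\ls,\mo_{\ls}(n)),
\]
and similarly $H^0(M,\z(n+1))\simeq H^0(\ls,\mo_{\ls}(n+1))$. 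By naturality of the projection formula, $m_1$ corresponds under these isomorphisms to the standard multiplication map
\[
H^0(\ls,\mo_{\ls}(1))\otimes H^0(\ls,\mo_{\ls}(n))\ra H^0(\ls,\mo_{\ls}(n+1))
\]
on $\ls\simeq\mathbb{P}^l$, which is well known to be surjective for every $n\geq 0$.

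The main obstacle, modest as it is, will be the identification $\pi_{*}\mo_M\simeq\mo_{\ls}$: whereas $\pi_{*}\z\simeq\mo_{\ls}$ is delivered for free by Theorem \ref{main}, the analogue for $\mo_M$ requires a separate appeal to the connectedness (in fact integrality) of compactified Jacobians of smooth curves on $X$, combined with normality of $\ls$. Once these two identifications are in place, everything else is formal projection-formula bookkeeping and the surjectivity reduces to the classical statement on $\mathbb{P}^l$.
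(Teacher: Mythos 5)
Your proposal is correct, and the second half (the reduction of the surjectivity of $m_1$ to the surjectivity of $H^0(\ls,\mo_{\ls}(1))\otimes H^0(\ls,\mo_{\ls}(n))\ra H^0(\ls,\mo_{\ls}(n+1))$ on $\ls\simeq\mathbb{P}^l$ via the projection formula and the two identifications $\pi_{*}\mo_M\simeq\pi_{*}\z\simeq\mo_{\ls}$) is exactly what the paper does. The only genuine divergence is in how $\pi_{*}\mo_M\simeq\mo_{\ls}$ is obtained. You use Stein factorization: connectedness of the generic fiber (the Jacobian of a smooth curve in $\ls$, which exists by $(\ha_3)$) makes the finite part $g$ birational, and normality of $\ls$ then forces $g$ to be an isomorphism; this implicitly also uses that $M$ is integral (Lemma \ref{mainzero} plus Corollary \ref{irred}) so that the Stein factor is a variety. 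The paper instead gets this in two lines from data already at hand: the canonical section of $\z$ from Lemma \ref{ngs} embeds $\mo_M$ into $\z$, so $\pi_{*}\mo_M$ is a subsheaf of $\pi_{*}\z\simeq\mo_{\ls}$, and since $h^0(\ls,\pi_{*}\mo_M)=h^0(M,\mo_M)=1$ the constant section $1$ lies in this subsheaf, forcing it to be all of $\mo_{\ls}$. Your route is heavier but more robust (it does not use the existence of the theta section or Theorem \ref{main} for this step); the paper's is shorter but leans on $\pi_{*}\z\simeq\mo_{\ls}$ twice. Both are valid.
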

\begin{proof}Since $\mo_M$ is a subsheaf of $\z,$  we have $\pi_{*}\mo_M$ is a subsheaf of $\pi_{*}\z$ and hence $\pi_{*}\mo_M$ is a subsheaf of $\mo_{\ls}.$  But on the other hand,  $h^0(\ls,\pi_{*}\mo_M)=h^0(M,\mo_M)=1.$  Thus $\pi_{*}\mo_M\simeq\mo_{\ls}.$

On $\ls$ we have that the following morphism $m_1'$ defined by multiplication is surjective for any $n$
\begin{equation}\label{mult}m_1':H^{0}(\ls,\mo_{\ls}(1))\otimes H^0(\ls,\mo_{\ls}(n))\ra H^0(\ls,\mo_{\ls}(n+1)).\end{equation}

And since $\pi_{*}\mo_M\simeq \mo_{\ls},$  the morphism 
\[\pi_{*}:H^0(M,\pi^{*}\mo_{\ls}(1))\ra H^0(\ls,\mo_{\ls}(1))\]
is a canonical isomorphism.   And also the morphism
\[\pi^{\z}_{*}:H^0(M,\z(n))\ra H^0(\ls,\mo_{\ls}(n))\]
is a isomorphism for any $n$.  And hence the surjectivity of $m_1$ in (\ref{mul}) can be deduced from the surjectivity of $m_1'$ in $(\ref{mult})$.  So we have proven the lemma.
\end{proof}

\begin{proof}[Proof of Statement 1 in Theorem \ref{main}]From the spectral sequence we have
\begin{equation}\label{spectral}
H^1(M,\z^r(s))\ra H^0(\ls,R^1\pi_{*}\z^r\otimes\mo(s))\ra H^2(\ls,\pi_{*}\z^r\otimes\mo(s)),\end{equation}

And for any $r>0,$  according to Proposition \ref{dualizing} and the ampleness of $\z^r(s)$ for $s\gg0,$  we know that $H^1(M,\z^r(s))=0$ for $s\gg0.$  And also $H^2(\ls,\pi_{*}\z^r\otimes\mo(s))=0$ for $s\gg0$ because $\pi_{*}\z^r$ is coherent on $\ls.$  Hence we have $H^0(\ls,R^1\pi_{*}\z^r\otimes\mo(s))=0$ for $s\gg0,$  then $R^1\pi_{*}\z^r$ has to be zero.
\end{proof}

To prove Statement $2$ of Theorem \ref{main},  we first show two lemmas.  
Recall that $\ls^{int}$ consists of points over which the fibers are of dimension $g_L$.  We define $M^{int}$ by the following Cartesian diagram
\begin{equation}\label{node}
\xymatrix{
M^{int}\ar[r]^j \ar[d]_{\pi^{int}}&M \ar[d]^{\pi}\\
\ls^{int}\ar[r]^i&\ls}
\end{equation}
Then we have
\begin{lemma}\label{sublf}
$\pi^{int}$ is flat and $\pi^{int}_{*}(j^{*}\z^r)$ is locally free of rank $r^{g_L}$ on $\ls^{int},$  and moreover $R^i\pi^{int}_{*}(j^{*}\z)=0,$  for all $i\geq1.$ 
\end{lemma}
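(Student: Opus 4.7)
The plan is to verify the three assertions in sequence, using miracle flatness, fiber cohomology on the smooth-curve locus, and Statement 1 of Theorem \ref{main} combined with cohomology and base change.

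First I would deduce flatness of $\pi^{int}$ from miracle flatness. By Lemma \ref{mainzero}, $M$ is Cohen-Macaulay, so the open subscheme $M^{int}$ is Cohen-Macaulay as well. The base $\ls^{int}$ is smooth, being open in the projective space $\ls$, and by the very definition of $\ls^{int}$ every fiber of $\pi^{int}$ has dimension exactly $g_L = \dim M - \dim \ls$. Miracle flatness (Matsumura, Theorem 23.1) then forces $\pi^{int}$ to be flat.

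Next I would compute fiber cohomology over the dense open $V \subset \ls^{int}$ of smooth integral curves, which is non-empty by condition $(\ha_3)$. For $[C]\in V$ the fiber $\pi^{-1}([C])$ is the Jacobian $J(C)$, and by Lemma \ref{ngs} the canonical section of $\z$ cuts out the classical theta divisor on $J(C)$, so the restriction of $\z$ is the theta bundle $\Theta_C$. Standard abelian variety theory gives $h^0(J(C),\Theta_C^{\otimes r}) = r^{g_L}$ and $h^i(J(C),\Theta_C^{\otimes r}) = 0$ for $i \geq 1$ and all $r \geq 1$. Combined with flatness, Grauert's theorem yields local freeness of $\pi^{int}_*(j^*\z^r)$ of rank $r^{g_L}$ and the vanishing of $R^i\pi^{int}_*(j^*\z^r)$ already over $V$.

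To extend the conclusions to all of $\ls^{int}$ I would handle the pieces separately. The vanishing $R^1\pi^{int}_*(j^*\z) = 0$ follows immediately from Statement 1 of Theorem \ref{main} via open-immersion base change along $i$ and $j$ in diagram \eqref{node}, since $R^1\pi^{int}_*(j^*\z) = i^* R^1\pi_*\z = 0$. For $R^k\pi^{int}_*(j^*\z)$ with $k \geq 2$ and for the rank-$r^{g_L}$ local freeness globally, one needs fiberwise cohomology on the remaining fibers $\pi^{-1}([C])$ with $[C]\in\ls^{int}\setminus V$. These are compactified Jacobians of integral locally planar (possibly singular) curves, on which the dualizing sheaf is trivial by Corollary \ref{trsame}; standard vanishing for theta bundles on such compactified Jacobians then gives $h^k = 0$ for $k \geq 1$. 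Flatness combined with this fiberwise vanishing and the constancy of the Euler characteristic $\chi = r^{g_L}$ yields, via Grauert, the claimed local freeness and direct image vanishing on all of $\ls^{int}$.

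The main obstacle is precisely the fiber analysis over $\ls^{int}\setminus V$: one must extend the smooth-curve computation to compactified Jacobians of integral singular curves (the crucial new input being the triviality of the dualizing sheaf from Corollary \ref{trsame}), and verify that any reducible curves that may happen to lie in $\ls^{int}$ do not obstruct the conclusion. Conditions $(\ha_2)$ and $(\ha_3)$ on $L$ constrain the geometry of $\ls^{int}$ sufficiently to make this feasible, after which flatness and Grauert do the rest.
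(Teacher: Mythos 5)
Your first step (miracle flatness from $M^{int}$ Cohen--Macaulay, $\ls^{int}$ regular, and equidimensional fibers) and your rank computation over smooth curves (fiber $\simeq J_C^{g_L-1}$, $\z$ restricting to the theta bundle by Lemma \ref{ngs}, $h^0 = r^{g_L}$) coincide with the paper's. The genuine gap is exactly where you locate it yourself: the fiberwise vanishing of higher cohomology over the non-generic members of $\ls^{int}$. Your proposed input there --- ``standard vanishing for theta bundles on compactified Jacobians'' of integral singular curves, justified by the triviality of the dualizing sheaf from Corollary \ref{trsame} --- is not a citable fact. Kodaira-type vanishing does not follow on a possibly singular projective variety merely from ampleness of the line bundle and triviality of $\omega$; one would need additional control (e.g.\ rational singularities) that you have not established. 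Moreover, $\ls^{int}$ is defined only by the fiber-dimension condition and may contain reducible or non-reduced curves, whose fibers are not compactified Jacobians of integral curves at all, so Corollary \ref{trsame} says nothing about them; you acknowledge this case but do not handle it. Note also that Corollary \ref{trsame} is itself derived in the paper from the complete-intersection structure of the fibers, so invoking it here does not buy you an independent route.

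The paper closes this gap with a uniform argument that avoids any case analysis of the singular fibers: since every fiber over $\ls^{int}$ has the expected dimension $g_L = \dim M - l$ and $M$ is Cohen--Macaulay, each such fiber is a complete intersection of $l$ divisors in $|\pi^{*}\mo_{\ls}(1)|$. Serre vanishing gives $H^{i}(M,\z^r(s))=0$ for $i>0$ and $s\gg0$, and chasing this through the Koszul resolution of the fiber yields $H^{i}(\z^r(s)|_{\pi^{-1}(p)})=0$ for $i>0$; since $\pi^{*}\mo_{\ls}(1)$ is trivial on a fiber, $\z^r(s)|_{\pi^{-1}(p)}\simeq\z^r|_{\pi^{-1}(p)}$, so $\z^r$ has no higher cohomology on \emph{every} fiber over $\ls^{int}$, integral or not. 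Cohomology and base change (Hartshorne III.12.11) then gives local freeness and the vanishing of the higher direct images in one stroke. If you want to salvage your write-up, replacing the compactified-Jacobian discussion by this complete-intersection argument is the missing idea.
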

\begin{proof}We have already seen that both $\ls$ and $M$ are irreducilbe.  The flatness of $\pi^{int}$ is because $\ls^{int}$ is regular,  $M^{int}$ is Cohen-Macaulay and every fiber of $\pi^{int}$ is of dimension $g_L$. (See \cite{ha},  III, Ex 10.9.)

Every fiber over a point in $\ls^{int}$ is a complete intersection of $l$ divisors in $|\pi^{*}\mo_{\ls}(1)|.$  Since $H^i(\z^r(s))=0, \forall i>0,s\gg0$,  $\z^r(s)$ restricted to every complete intersection of divisors in $|\pi^{*}\mo_{\ls}(1)|$ has no higher cohomology as $s\gg0$.   But fiberwise $\z^r(s)$ is isomorphic to $\z^r.$  Hence $\z^r$ restricted to every fiber has no higher cohomology,  which together with the flatness of $\pi^{int}$ implies the local freeness of $\pi^{int}\z^r$ (see Theorem 12.11 in Chap. III in \cite{ha}).  Lemma \ref{ngs} implies that $\z|_{\pi^{-1}([C])}$ is the usual $\theta$-bundle on $\pi^{-1}([C])\simeq J_C^{g_L-1}$ and $h^0(\z^r|_{\pi^{-1}([C])})=r^{g_L}$ for $C$ a smooth curve.  Thus $\pi^{int}\z^r$ is of rank $r^{g_L}$.
\end{proof}

\begin{lemma}\label{codm}$M-M^{int}$ is of codimension at least $2$.
\end{lemma}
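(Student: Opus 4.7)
The goal is to bound $\dim(M - M^{int}) \le l + g_L - 2$, where $M - M^{int} = \pi^{-1}(\ls - \ls^{int})$. The strategy is to stratify by the decomposition type of the schematic support of sheaves in $M - M^{int}$, and invoke condition $(\ha_2)$ to obtain codimension at least $2$ on each stratum.

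By condition $(\ha_3)$, every $[C] \in \ls - \ls^{int}$ lies in the non-integral locus, so $C$ admits a non-trivial decomposition $C = \sum_j C_j$ with $C_j \in |L_j|$, $L = \sum_j L_j$, and at least two summands (counted with multiplicity). For each such multiset $\{L_j\}$ of effective classes, let $\ls_{\{L_j\}} \subset \ls$ be the locally closed subset parametrising curves of this type; only finitely many are nonempty, and $\dim \ls_{\{L_j\}} \le \sum_j l_j$. I then bound $\dim M_{\{L_j\}} := \pi^{-1}(\ls_{\{L_j\}})$ for each such stratum separately.

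The estimate proceeds via Jordan--H\"older. A semi-stable $\mf$ supported on $C = \sum_j C_j$ has stable JH factors $\mg_k$ with $\chi(\mg_k) = 0$, each supported on an integral sub-curve in some class $L_k' \le L_j$. After a further stratification by JH type, the associated graded yields a map
\[\mathrm{gr} : M_{\{L_j\}} \longrightarrow \prod_k M_X^H(u_k), \qquad u_k = (0, L_k', \chi = 0),\]
whose image has dimension at most $\sum_k \dim M_X^H(u_k)^s \le \sum_k (l_k' + \max\{g_{L_k'}, 0\})$: for $g_{L_k'} \le 0$ by Proposition \ref{isomls} (so $M_X^H(u_k) \simeq |L_k'|$), and for $g_{L_k'} > 0$ by the dimension count from Lemma \ref{mainzero}, applied inductively on sub-decompositions. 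Strictly semi-stable sheaves collapse to their graded objects in $\mathrm{gr}$, while over stable sheaves with non-integral support the fibres of $\mathrm{gr}$ are controlled by the space of non-split extensions, bounded by $\sum_{i<k} \dim \mathrm{Ext}^1(\mg_i, \mg_k) / \mathbb{G}_m$, in analogy with the $\Sigma_2$ estimate inside the proof of Lemma \ref{lep}. Invoking $(\ha_2)$ for the decomposition $L = \sum_k L_k'$ yields
\[\dim M_{\{L_j\}} \le \sum_k l_k' + \sum_k \max\{g_{L_k'}, 0\} \le l + g_L - 2,\]
so taking the finite union over decomposition types gives the lemma.

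The main obstacle will be the stable sheaves with non-integral (especially non-reduced) support, which do not factor as honest direct sums in $\prod_k M_X^H(u_k)^s$; one must verify that the extension-data dimension in the $\mathrm{gr}$-fibres is strictly dominated by the $+2$ slack of $(\ha_2)$. A cleaner fallback, should this direct count turn out to be delicate, is to split $M - M^{int}$ into the strictly semi-stable locus, already codimension $\ge 2$ by Lemma \ref{mainze}, and the stable-with-non-integral-support locus, and to bound the latter by upper-semi-continuity of fibre dimension over the codimension-$2$ base $\ls - \ls^{int}$.
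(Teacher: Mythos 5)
There is a genuine gap, in fact two. Your main route stratifies $M-M^{int}$ by decomposition type and pushes everything through a Jordan--H\"older map $\mathrm{gr}$ into $\prod_k M_X^H(u_k)$ with all $\chi(u_k)=0$. This works for the strictly semi-stable locus (it is exactly the argument of Lemma \ref{mainze}), but it says nothing about \emph{stable} sheaves supported on non-integral curves: a stable sheaf is its own Jordan--H\"older graded object, so $\mathrm{gr}$ is the identity on that locus and produces no dimension drop. If you instead filter by restricting to components, the pieces $\mf_i$ are sheaves on the $C_j$ with Euler characteristics $\chi_i$ that need only satisfy $\sum_i\chi_i=0$, not $\chi_i=0$ individually; you would have to sum over all such splittings, bound the corresponding moduli $M(0,L_i,\chi_i)$ and the extension spaces $\mathrm{Ext}^1$ between the pieces, and condition $(\ha_2)$ as stated does not obviously absorb these contributions. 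You flag this as ``the main obstacle'' but do not resolve it, and it is precisely where the content of the lemma lies. Your fallback fails for a different reason: upper semi-continuity of fibre dimension gives a \emph{lower} bound on special fibres, not an upper one, and $\ls-\ls^{int}$ is by definition exactly the locus where the fibres of $\pi$ jump above $g_L$, so ``codimension-$2$ base times $g_L$-dimensional fibres'' is not available there.

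The paper's proof avoids both problems by a different decomposition. It first discards $M-M^s$ (codimension $\geq2$ by Lemma \ref{mainze}) and $M^s-M^{sm}$ (codimension $\geq2$, deduced from Lemma \ref{lep}), and then observes that what remains of $M-M^{int}$ sits inside $M^{sm}\cap\pi^{-1}(\ls-\ls^{int})$. On $M^{sm}$ the morphism $\pi$ is \emph{smooth} (Le Potier), hence its fibres there are equidimensional of dimension $g_L$, so the preimage of the codimension-$\geq2$ locus $\ls-\ls^{int}$ (condition $(\ha_3)$) has codimension $\geq2$. The smoothness of $\pi|_{M^{sm}}$ is the key ingredient you are missing; with it, no Jordan--H\"older or extension-space count is needed for this lemma.
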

\begin{proof}Because of Lemma \ref{mainzero},  it is enough to show that $M^s-(M^{int}\cap M^s)$ is of codimension at least $2$ in $M^s.$  

Notice that $M^s-(M^{int}\cap M^s)$ is contained in 
\[(M^{sm}\cap\pi^{-1}(\ls-\ls^{int}))\bigcup(M^s-M^{sm}).\]
So it is enough to prove both the two sets above are of codimension $\geq2$ in $M^s.$

$\pi$ restricted to $M^{sm}$ is smooth.  So condition $(\ha_3)$ implies that $M^{sm}\cap\pi^{-1}(\ls-\ls^{int})$ is of codimension $2.$  And we can easily deduce from Lemma \ref{lep} that $M^s-M^{sm}$ is of codimension $\geq2$.
\end{proof}

\begin{proof}[Proof of Statement $2$ in Theorem \ref{main}]We go back to the Cartesian diagram (\ref{node}).  As we proved in Lemma \ref{sublf},  $\pi^{int}_{*}j^{*}\z^r$ is locally free on $\ls^{int}$ for $r>0.$
Since both $j$ and $i$ in (\ref{node}) are open immersions,  we get $\pi^{int}_{*}j^{*}\z^r\simeq i^{*}\pi_{*}\z^r$ which means that $\pi_{*}\z^r$ restricted to $\ls^{int}$ is locally free for all $r>0.$

Moreover,  $M$ is Cohen-Macaulay,  $\z^r$ is a line bundle on $M$,  and $M-M^{int}$ is of codimension $\geq2$.  Hence according to the theory of cohomology with supports (see \cite{gro} Exp. III, p.8,  Lemma 3.1),  we have $\z^r\simeq j_{*}j^{*}\z^r$ and hence $\pi_{*}\z^r\simeq\pi_{*}j_{*}j^{*}\z^r.$  On the other hand,  because the diagram (\ref{node}) commutes, we have $\pi_{*}\z^r\simeq i_{*}\pi^{int}_{*}j^{*}\z^r.$  We already know that $\pi^{int}_{*}j^{*}\z^r$ is locally free hence torsion-free on $\ls^{int}.$  And $i$ is an open immersion.  So $i_{*}\pi^{int}_{*}j^{*}\z^r$ must be torsion-free on $\ls$.

For $r=1,$  $\pi^{int}_{*}j^{*}\z$ is a line bundle on $\ls^{int}$ with a global section non-vanishing everywhere,  hence $\pi^{int}_{*}j^{*}\z\simeq\mo_{\ls^{int}}= i^{*}\mo_{\ls}.$  And because of $(\ha_3)$ we have that $i_{*}i^{*}\mo_{\ls}\simeq \mo_{\ls}$ and hence $\pi_{*}\z\simeq\pi_{*}j_{*}j^{*}\z\simeq \mo_{\ls}.$  

This finishes the proof of  Statement $2.$
\end{proof}
At the end of this subsection we prove a lemma which gives an estimate of the dimensions of all fibers of $\pi.$  The lemma will be used later.  We recall that the locus 
\[D_{\z}:=\{[\mf]\in M|h^0(\mf)\neq0\}\]
is a divisor of $\z$ on $M$ by Lemma \ref{ngs}.  We have 
\begin{lemma}\label{zdd}For any point $p\in\ls,$  let $D_{p}=D_{\z}\cap\pi^{-1}(p).$  Then we have
\[g_L\leq dim~\pi^{-1}(p)\leq dim~D_{p}+1.\]
\end{lemma}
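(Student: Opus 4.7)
The plan is to establish the two inequalities separately.

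For the lower bound $g_L \leq \dim \pi^{-1}(p)$, I will appeal to the standard theorem on the dimensions of fibers of a morphism between irreducible schemes (e.g.\ Hartshorne II, Ex.~3.22). By Corollary \ref{irred} together with Lemma \ref{mainzero}, $M$ is irreducible of dimension $l+g_L$; $\ls$ is irreducible of dimension $l$; and $\pi$ is surjective because its image is closed in $\ls$ and contains the dense open set $\ls^{int}$. Consequently every nonempty fiber satisfies $\dim \pi^{-1}(p) \geq \dim M - \dim \ls = g_L$.

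For the upper bound $\dim \pi^{-1}(p) \leq \dim D_p + 1$, set $F := \pi^{-1}(p)$ and choose an irreducible component $F_1 \subseteq F$ of maximal dimension, so $\dim F_1 = \dim F \geq g_L \geq 1$ (the last inequality using that this subsection deals with positive genus). By Lemma \ref{ngs}, $D_{\z}$ is the zero locus of a global section $\sigma \in H^0(M, \z)$, i.e.\ an effective Cartier divisor on $M$.

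The crux is to show that $\z|_{F_1}$ is non-trivial. By Proposition \ref{ample}, $\z^r(n) = \z^r \otimes \pi^{*}\mo_{\ls}(n)$ is ample on $M$ for $n \gg 0$. Since $\pi^{*}\mo_{\ls}(n)|_F$ is trivial, restricting first to $F$ and then to the closed positive-dimensional subscheme $F_1$ shows that $\z^r|_{F_1}$ is ample on $F_1$, which forces $\z|_{F_1}$ to be non-trivial.

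Finally, I would examine $\sigma|_{F_1}$. If $\sigma|_{F_1} \equiv 0$, then $F_1 \subseteq D_p$ and $\dim D_p \geq \dim F$. Otherwise, $\sigma|_{F_1}$ is a nonzero section of a non-trivial line bundle on $F_1$, so its zero locus is a nonempty proper closed subscheme of $F_1$ (nonempty because a nowhere-vanishing section would trivialize $\z|_{F_1}$); Krull's Hauptidealsatz then gives $\dim(D_p \cap F_1) \geq \dim F_1 - 1 = \dim F - 1$, whence $\dim D_p + 1 \geq \dim F$. The main obstacle is exactly the non-triviality of $\z|_{F_1}$: without it a section of $\z$ could in principle be nowhere vanishing on a maximal-dimensional component of $F$ and the bound would break. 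The appeal to Proposition \ref{ample}, combined with the triviality of $\pi^{*}\mo_{\ls}(n)$ on fibers, is the technical heart of the argument.
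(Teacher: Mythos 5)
Your proposal is correct and follows essentially the same route as the paper: the lower bound is the standard fiber-dimension estimate (the paper phrases it as the fiber being cut out by $l$ equations in the irreducible $(l+g_L)$-dimensional $M$), and the upper bound rests on exactly the paper's key point, namely that ampleness of $\z^r(n)$ together with the triviality of $\pi^{*}\mo_{\ls}(n)$ on fibers forces any positive-dimensional component of a fiber to meet $D_{\z}$ in a divisor. The only cosmetic difference is that you argue via non-triviality of $\z$ on a maximal-dimensional component, while the paper runs the dichotomy $F_p\cap D_{\z}=\emptyset$ or not over every component.
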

\begin{proof}Since every fiber is a closed subscheme defined by $l$ equations and $M$ is irreducible of dimension $l+g_L,$  we have $dim~\pi^{-1}(p)\geq g_L.$

To prove $\pi^{-1}(p)\leq dim~D_{p}+1,$  it is enough to show that every irreducible component of $\pi^{-1}(p)$ has dimension no larger than $dim~D_{p}+1.$  Let $F_p$ be an irreducible component of $\pi^{-1}(p).$  If $F_p\cap D_{\z}\neq\emptyset,$  then $F_p\cap D_{\z}$ is a divisor in $F_p$ and hence $dim~F_p=dim~(F_p\cap D_{\z})+1\leq dim~D_{p}+1.$  If $F_p\cap D_{\z}=\emptyset,$  then $\z$ restricted on $F_p$ is isomorphic to the structure sheaf,  so is $\z(n)$ for any $n$ because $F_p$ is contained in a fiber.  But on the other hand $\z(n)$ is ample for $n$ big enough,  hence $F_p$ has to be of dimension zero and hence $dim~F_p<1\leq dim~D_{p}+1.$  So we have proven the lemma.     
\end{proof}

\subsection{Genus one case and $r\geq1.$}
In this subsection we let $g_L=1$ and prove this following theorem: 
\begin{thm}\label{gomrt}
1, $\ls^{int}=\ls$,  hence $\pi$ is flat and $R^i\pi_{*}\z^r=0,  \forall i,r>0;$

2, for $r>0,$
$\pi_{*}\Theta^r\simeq \mathcal{O}_{\ls}\oplus(\mathcal{O}_{\ls}(-i))^{\oplus_{i=2}^r}.$
\end{thm}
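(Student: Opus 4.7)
The plan for Part 1 is to prove that every fiber of $\pi$ has dimension exactly $g_L=1$; once this is established, flatness follows from the standard criterion (since $\ls$ is regular and $M$ is Cohen--Macaulay by Lemma \ref{mainzero}), and the vanishing $R^i\pi_*\z^r=0$ for $i\geq 1$ is immediate: the $i=1$ case is Theorem \ref{main}(1), while $i\geq 2$ is forced by the relative dimension being one. The lower bound $\dim\pi^{-1}(p)\geq g_L=1$ is Lemma \ref{zdd}. For the upper bound I would first note that the canonical section $s\in H^0(M,\z)$ from Lemma \ref{ngs} pushes forward under $\pi_*$ to a non-zero, hence nowhere-vanishing, global section of $\pi_*\z\simeq\mo_{\ls}$ (Theorem \ref{main}(2)); this shows that $s$ does not vanish identically on any fiber, so $D_p:=D_\z\cap\pi^{-1}(p)$ is a proper codimension-one subscheme of $\pi^{-1}(p)$. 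Combined with Lemma \ref{zdd}, this reduces the problem to showing $\dim D_p\leq 0$ for every $p$, which I would establish by a case analysis of the non-integral curves in $\ls$. By $(\ha_2)$ applied to $g_L=1$, every proper decomposition $L=\sum L_i$ into effective summands satisfies $\sum l_i+\sum\max\{g_{L_i},0\}\leq l-1$, so the reducible locus has codimension $\geq 2$; the structural description of semistable sheaves on the genus-zero components (Proposition \ref{isomls}), together with Lemma \ref{finitegz} in Appendix B for an isolated positive-genus piece, then bounds the fibers of $\pi|_{D_\z}$ over the non-integral curves.

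For Part 2 I would induct on $r$, with the base case $r=1$ given by Theorem \ref{main}(2). For the inductive step, multiplication by the canonical section $s\in H^0(M,\z)$ produces the short exact sequence
\[0\to\z^{r-1}\xrightarrow{\cdot s}\z^r\to\z^r|_{D_\z}\to 0,\]
and pushing it forward via $\pi$ (using $R^1\pi_*\z^{r-1}=0$ from Part 1) yields
\[0\to\pi_*\z^{r-1}\to\pi_*\z^r\to\pi_*(\z^r|_{D_\z})\to 0.\]
By Part 1 combined with Lemma \ref{sublf}, $\pi_*\z^r$ is locally free of rank $r^{g_L}=r$ on all of $\ls$; fiberwise injectivity of multiplication by $s$ (from the non-vanishing of $s$ on each fiber established in Part 1), together with cohomology and base change, forces the cokernel $\pi_*(\z^r|_{D_\z})$ to be locally free of rank one, i.e.\ a line bundle on $\ls=\mathbb{P}^l$. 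Once this cokernel is identified as $\mo_{\ls}(-r)$, the sequence splits because
\[\mathrm{Ext}^1_{\ls}\bigl(\mo_{\ls}(-r),\mo_{\ls}(-i)\bigr)=H^1\bigl(\mathbb{P}^l,\mo_{\ls}(r-i)\bigr)=0\]
for every $i\in\{0\}\cup\{2,\dots,r-1\}$, since $r-i\geq 1$ and $l\geq 1$; combined with the inductive hypothesis $\pi_*\z^{r-1}\simeq\mo_{\ls}\oplus\bigoplus_{i=2}^{r-1}\mo_{\ls}(-i)$ this yields the asserted formula.

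The main obstacle is the identification $\pi_*(\z^r|_{D_\z})\simeq\mo_{\ls}(-r)$. My approach would be to observe that over the smooth-elliptic locus the theta divisor on each fiber is a single reduced point, so $\pi|_{D_\z}\colon D_\z\to\ls$ is generically an isomorphism, reducing the computation to understanding the restriction $\z^r|_{D_\z}$ pulled back along this birational identification. By adjunction, $\z|_{D_\z}\simeq N_{D_\z/M}$ (the normal bundle of $D_\z$ in $M$), so $\z^r|_{D_\z}\simeq N_{D_\z/M}^{\otimes r}$, and the claim reduces to showing that $N_{D_\z/M}$ corresponds to $\mo_{\ls}(-1)$ under the birational identification $D_\z\cong\ls$. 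This geometric fact about the normal bundle is the principal technical input. As a cross-check or alternative route, one may compute $\det\pi_*\z^r$ via Grothendieck--Riemann--Roch with the dualizing-sheaf formula of Proposition \ref{dualizing} and extract the degree from the determinantal identity $\det\pi_*\z^r=\det\pi_*\z^{r-1}\otimes\pi_*(\z^r|_{D_\z})$, or match the generating series $Z^r(t)=\sum_n h^0(M,\z^r(n))t^n$ against the prediction $(1+t^2+\dots+t^r)/(1-t)^{l+1}$ using the vanishing of higher cohomology of $\z^r(n)$ for $n\gg 0$.
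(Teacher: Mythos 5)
Your overall architecture matches the paper's --- bound the fibre dimension of $D_{\z}$ and invoke Lemma \ref{zdd} for Part 1, then induct on $r$ via the restriction sequence $0\to\z^{r-1}\to\z^{r}\to\z^{r}|_{D_{\z}}\to 0$ and split on $\mathbb{P}^l$ for Part 2 --- but both places where you defer the real work are genuine gaps, and they share a single missing idea. The observation the paper makes, and you do not, is that when $g_L=1$ the structure sheaf $\mo_C$ of any $[C]\in\ls$ has $\chi(\mo_C)=0$, is stable, and is the \emph{only} point of $D_{\z}$ in the fibre over $[C]$: a nonzero section of a semistable $\mf$ of class $u$ gives an injection $\mo_D\hookrightarrow\mf$ for some subcurve $D$, semistability forces $\chi(\mo_D)\le 0$, i.e.\ $g_D\ge 1$, hence $D=C$ and $\mf\simeq\mo_C$. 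This one fact does everything: it gives $\dim D_p=0$ for \emph{every} $p$, integral or not, which is exactly what Part 1 needs; and it identifies $D_{\z}$ with the image of the section $\imath:\ls\to M$, $[C]\mapsto[\mo_C]$, induced by the structure sheaf of the universal curve $\mc$, so that $\pi|_{D_{\z}}$ is an isomorphism (not merely birational) and $\z^{r}|_{D_{\z}}=\imath^{*}\z^{r}=(\det p_{!}[\mo_{\mc}])^{-r}\simeq\mo_{\ls}(-r)$, computed directly from the Koszul resolution $0\to q^{*}\mo_X(-L)\otimes p^{*}\mo_{\ls}(-1)\to\mo_{X\times\ls}\to\mo_{\mc}\to 0$ together with $\chi(\mo_X(-L))=\chi(\mo_X)-\chi(\mo_C)=1$ (Lemmas \ref{foure} and \ref{fourp} in the paper).

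Without this, neither of your cruxes closes. In Part 1, the fact that the reducible locus has codimension $\ge 2$ in $\ls$ says nothing about $\dim D_p$ for $p$ \emph{inside} that locus, which is precisely what Lemma \ref{zdd} requires, and the appeal to Proposition \ref{isomls} and Lemma \ref{finitegz} is never assembled into an actual bound over non-integral curves. In Part 2 you explicitly leave the identification of $N_{D_{\z}/M}$ with $\mo_{\ls}(-1)$ as ``the principal technical input''; note that adjunction here is only the tautology $N_{D_{\z}/M}=\mo_M(D_{\z})|_{D_{\z}}=\z|_{D_{\z}}$, so this input is the whole content of the theorem and cannot be relegated to a cross-check. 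The surrounding steps --- flatness from the Cohen--Macaulay criterion, $R^{i}\pi_{*}=0$ for $i\ge 2$ from relative dimension one, local freeness of $\pi_{*}\z^{r}$ of rank $r$, and the $\mathrm{Ext}^1$-vanishing that splits the pushed-forward sequence --- are all correct and consistent with the paper.
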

Statement $2$ of Theorem \ref{gomrt} implies that for $g_L=1$ we can write down the generating function 
\begin{eqnarray}Z^r(t)&=&\sum_{n}h^0(M,\lcn)t^n
=\sum_{n}h^0(M,\Theta^r\otimes\pi^{*}\mathcal{O}_{\ls}(n))t^n\nonumber
\\
&=&\sum_{n}h^0(\ls, \pi_{*}(\Theta^r)\otimes\mathcal{O}_{\ls}(n))t^n\nonumber\\
&=&\large{\frac{1+t^{2}+t^{3}+\ldots+t^{r}}{(1-t)^{l+1}}}.\nonumber
\end{eqnarray}
When $r=2,$  it matches G\"ottsche's computation.   Hence we have

\begin{coro}Let $X$ and $L$ be as before and moreover $g_L=1$.  Let $u=(0,L,\chi(u)=0)$ and $c_n=(2,0,n)$,  then we have for all $n\geq0$
\begin{equation}\label{eqone}\chi(M(c_n),\lu)=\chi(M(u),\lambda_{c_n})=h^0(M(u),\lambda_{c_n}).\end{equation} 
\end{coro}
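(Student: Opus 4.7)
The corollary packages two separate equalities, so I will split the proof accordingly. The second equality $\chi(M(u),\lambda_{c_n})=h^0(M(u),\lambda_{c_n})$ is a vanishing-of-higher-cohomology statement on the ``$u$ side'' and follows almost immediately from Theorem \ref{gomrt}. The first equality $\chi(M(c_n),\lu)=\chi(M(u),\lambda_{c_n})$ is of a different nature: it compares Euler characteristics on two different moduli spaces and amounts to matching our generating function with G\"ottsche's.

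For the second equality, I would argue as follows. Setting $r=2$ in Theorem \ref{gomrt} gives $R^i\pi_{*}\z^2=0$ for every $i>0$, and $\pi_{*}\z^2\simeq \mo_{\ls}\oplus\mo_{\ls}(-2)$. Since $\lambda_{c_n}\simeq\z^2(n)$ and $\pi_{*}\z^2(n)\simeq\pi_{*}\z^2\otimes\mo_{\ls}(n)$, the Leray spectral sequence degenerates and yields
\[
H^i(M(u),\lambda_{c_n})\;\simeq\;H^i(\ls,\mo_{\ls}(n))\;\oplus\;H^i(\ls,\mo_{\ls}(n-2))
\]
for every $i\geq 0$. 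Now $\ls$ is a projective space of some dimension $l$, and in each of our cases (cubics on $\mathbb{P}^2$, or $L=2G+(e+2)F$ on $\mathbb{F}_e$ with $e=0,1$) one checks directly that $l\geq 2$. By the standard computation of line-bundle cohomology on $\mathbb{P}^l$, $H^i(\mathbb{P}^l,\mo(k))=0$ whenever $i>0$ and $k\geq -l$; for $n\geq 0$ both $n$ and $n-2$ satisfy this, so all higher cohomology vanishes and $\chi(M(u),\lambda_{c_n})=h^0(M(u),\lambda_{c_n})$.

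For the first equality, the plan is to compare two explicit generating functions. On the $u$ side, Theorem \ref{gomrt} already yields
\[
\sum_{n\geq 0}\chi(M(u),\lambda_{c_n})\,t^n\;=\;\sum_{n\geq 0}h^0(M(u),\lambda_{c_n})\,t^n\;=\;\frac{1+t^{2}}{(1-t)^{l+1}},
\]
which unpacks to $\chi(M(u),\lambda_{c_n})=\binom{n+l}{l}+\binom{n-2+l}{l}$. On the $c_n$ side, G\"ottsche's formula for $\chi(M_X^H(2,0,n),\lambda_L)$ (for $X=\mathbb{P}^2$ or a Hirzebruch surface, with $L$ as in Example \ref{exmone}/\ref{exmtwo} and $g_L=1$) provides a closed-form expression. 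The final step is to expand G\"ottsche's formula and verify that it equals the binomial sum above for every $n\geq 0$, or equivalently that the associated generating function also equals $(1+t^2)/(1-t)^{l+1}$.

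The main obstacle is this last step: reading off G\"ottsche's (rather intricate) generating series in exactly the two surfaces and the handful of divisor classes with $g_L=1$, and checking coefficient-by-coefficient (or, better, as rational functions in $t$) that it reduces to $(1+t^2)/(1-t)^{l+1}$. Everything else --- the cohomology vanishing, the reduction to a comparison on $\ls\cong\mathbb{P}^l$ --- is formal once Theorem \ref{gomrt} is in hand; the genuine arithmetic lies in matching the two formulas.
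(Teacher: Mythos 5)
Your proposal is correct and takes essentially the same route as the paper: the equality $\chi(M(u),\lambda_{c_n})=h^0(M(u),\lambda_{c_n})$ comes from $R^i\pi_{*}\z^2=0$ and the explicit form of $\pi_{*}\z^2$ given by Theorem \ref{gomrt} (the paper states this Leray argument more tersely, without spelling out the $l\geq 2$ check you include), and the equality $\chi(M(c_n),\lu)=\chi(M(u),\lambda_{c_n})$ is, in the paper as well, obtained by matching the generating function $(1+t^{2})/(1-t)^{l+1}$ against G\"ottsche's computation, which is cited as external input rather than re-derived. So the comparison with G\"ottsche's formula that you flag as the remaining work is exactly what the paper also leaves to the cited reference.
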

\begin{proof}For any $r>0$ and $n\geq0,$  $\z^r(n)$ has no higher cohomology.  This is because $\pi_{*}(\z^r(n))$ has no higher cohomology and $R^i\pi_{*}\z^r=0.$
\end{proof}


\begin{proof}[Proof of Statement 1 in Theorem \ref{gomrt}]Notice that for any $[C]\in\ls,$  the structure sheaf $\mo_{C}$ on $C$ is stable and of Euler characteristic zero.  Hence for any $[\mf]\in D_{\z}$ supported at curve $C$,  $\mf\simeq \mo_{C}$.  And hence we know that $D_{\z}$ restricted to every fiber of $\pi$ is a point and by Lemma \ref{zdd} we know that every fiber of $\pi$ is of dimension $g_L.$  And hence the statement.
\end{proof}

To prove Statement $2$ in Theorem \ref{gomrt},  we need some lemmas.
\begin{lemma}\label{foure}There is a embedding $\imath:\ls\ra M$ induced by the structure sheaf of the universal curve in $X\times \ls.$  Moreover $\imath$ provides a section of $\pi$ with its image the $\z$-divisor $D_{\z},$  where $D_{\z}$ consists of all the $[\mf]$ such that $h^0(\mf)\neq0.$  And hence $D_{\z}\simeq \ls.$
\end{lemma}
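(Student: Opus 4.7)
The plan is to construct $\imath$ as the classifying morphism of the universal curve over $\ls$, show it is a section of $\pi$, and then match its image scheme-theoretically with the theta divisor $D_{\z}$.

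First I would build $\imath$. Let $\mc \subset X \times \ls$ be the universal curve in the linear system; its structure sheaf $\mo_{\mc}$ is flat over $\ls$ with fiber $\mo_C$ over each $[C] \in \ls$. As already noted in the proof of Statement $1$ of Theorem \ref{gomrt}, each $\mo_C$ is stable with $\chi(\mo_C) = 1 - g_L = 0$ (using $g_L = 1$), hence represents a point of $M = \mhu$. By the coarse moduli property of $M$, the flat family $\mo_{\mc}$ then induces a classifying morphism $\imath \colon \ls \ra M$.

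Next I would verify that $\imath$ is a section of $\pi$ and therefore a closed immersion. By Proposition \ref{morph} and condition $\textbf{C}$, $\pi$ sends a sheaf in $\mhu$ to its schematic support, and the schematic support of $\mo_C$ is exactly $C$; hence $\pi \circ \imath = \mathrm{id}_{\ls}$. Because both $M$ and $\ls$ are projective, $\pi$ is separated, and any section of a separated morphism is automatically a closed immersion, so $\imath$ is an embedding.

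Finally I would identify $\imath(\ls)$ with $D_{\z}$. For every $[C] \in \ls$ we have $h^0(\mo_C) \neq 0$, so by Lemma \ref{ngs} the image $\imath(\ls)$ lies in $D_{\z}$. Conversely, as argued in the proof of Statement $1$, any $[\mf] \in D_{\z}$ with schematic support $C$ is isomorphic to $\mo_C$, so $\imath$ is a bijection onto $D_{\z}$ on closed points; in particular $\imath$ factors through the closed subscheme $D_{\z}$ of $M$. The resulting bijective closed immersion $\ls \hookrightarrow D_{\z}$ will be an isomorphism once $D_{\z}$ is known to be reduced along the image. I would verify this by restricting to a fiber $\pi^{-1}([C])$ over a smooth curve $C \in \ls^{int}$, where the fiber is an elliptic Jacobian, $\z$ restricts to the principal theta bundle, and the canonical theta-section vanishes to order one at the origin.

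The main obstacle I anticipate is this last scheme-theoretic matching, namely that the natural section of $\z$ cuts out $\imath(\ls)$ with multiplicity exactly one. Over $\ls^{int}$ this reduces to the classical theta picture on an elliptic curve; to propagate the identification to all of $M$ I would use Lemma \ref{codm} (which bounds the bad locus in codimension two) together with the normality and Cohen--Macaulayness of $M$ established in Section $4.2$.
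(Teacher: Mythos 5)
Your proposal is correct and takes essentially the same route as the paper: the paper's own proof simply observes that the structure sheaf of the universal curve $\mathcal{C}\subset X\times\ls$ induces a classifying morphism $\imath:\ls\ra M$ which is a section of $\pi$ with image $D_{\z}$. Your additional care about the scheme-theoretic identification --- checking that the canonical section of $\z$ cuts out $\imath(\ls)$ with multiplicity one by restricting to fibers over smooth curves and propagating via the Cohen--Macaulay property --- addresses a point the paper leaves implicit, and the argument you sketch for it is sound.
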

\begin{proof}In $X\times \ls$,  there is a universal curve $\mathcal{C}$ such that every fiber $\mathcal{C}_s$ is just the curve represented by point $s$ in $\ls.$ 
\begin{equation}\label{emb}\xymatrix{
  \mathcal{C}  \ar[r]
                & X\times \ls \ar[ld]^{q} \ar[d]^{p}  \\
                X &\ls             }\end{equation}
                
The structure sheaf of $\mathcal{C}$ induces an injective morphism embedding $\ls$ as a subscheme of  $M.$
\[\imath:\ls\rightarrow M.\]

One can see that the image of $\imath$ is $D_{\z}$ and $\imath$ also provides a section of the projection $\pi.$
\end{proof}
\begin{lemma}\label{fourp}$\z^r|_{D_{\z}}\simeq\mathcal{O}_{\ls}(-r)$
\end{lemma}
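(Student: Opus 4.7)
The plan is to pull the statement back along the embedding $\imath:\ls\hookrightarrow M$ from Lemma \ref{foure} and compute the resulting line bundle on $\ls$ by a determinant-of-cohomology calculation on the universal curve.

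First, recall that $\imath$ is the classifying map of the flat family $\mathcal{O}_{\mathcal{C}}$, where $\mathcal{C}\subset X\times\ls$ is the universal curve sitting in the diagram (\ref{emb}). Since $\imath$ maps $\ls$ isomorphically onto $D_{\z}\subset M$, it suffices to compute $\imath^{\ast}\z^r$. By the very construction of $\z=\lambda(\mo_X)$ in (\ref{dlb}) and functoriality of the determinant line bundle with respect to pullback of families, one has
\[
\imath^{\ast}\z\;\simeq\;\bigl(\det Rp_{\ast}\mathcal{O}_{\mathcal{C}}\bigr)^{-1},
\]
where $p:X\times\ls\to\ls$ is the projection; note that the normalisation $\mathcal{O}_{\mathcal{C}}$ is a genuine universal family (not just up to twist), so no line bundle from $\ls$ enters.

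Next, I would compute $Rp_{\ast}\mathcal{O}_{\mathcal{C}}$ via the tautological resolution
\[
0\longrightarrow q^{\ast}L^{-1}\otimes p^{\ast}\mo_{\ls}(-1)\longrightarrow \mo_{X\times\ls}\longrightarrow\mathcal{O}_{\mathcal{C}}\longrightarrow 0,
\]
obtained from the fact that $\mathcal{C}$ is cut out by the tautological section of $q^{\ast}L\otimes p^{\ast}\mo_{\ls}(1)$. Applying $Rp_{\ast}$, the middle term pushes forward to $\mo_{\ls}$ (since $H^i(X,\mo_X)=0$ for $i>0$ on our rational surfaces), while the left term gives a complex whose cohomology sheaves are the trivial bundles $H^i(X,L^{-1})\otimes\mo_{\ls}(-1)$. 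Taking determinants and using additivity in distinguished triangles,
\[
\det Rp_{\ast}\mathcal{O}_{\mathcal{C}}\;\simeq\;\mo_{\ls}(-1)^{-\chi(X,L^{-1})}\;=\;\mo_{\ls}(g_L),
\]
since Riemann-Roch gives $\chi(X,L^{-1})=\chi(\mo_X)+\tfrac{1}{2}L(L+K)=g_L$ on all the surfaces in Examples \ref{exmone} and \ref{exmtwo}. Hence $\imath^{\ast}\z\simeq\mo_{\ls}(-g_L)$, and specialising to $g_L=1$ yields $\imath^{\ast}\z\simeq\mo_{\ls}(-1)$.

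Finally, since $\imath^{\ast}\z^r=(\imath^{\ast}\z)^{\otimes r}\simeq\mo_{\ls}(-r)$ and $\imath$ identifies $\ls$ with $D_{\z}$, the lemma follows. There is no real obstacle here; the only step requiring care is tracking signs and conventions in the determinant-of-cohomology additivity to be sure the exponent is $-\chi(L^{-1})=-g_L$ (and not $+g_L$), and checking in each of the three surface cases ($L=3H$ on $\mathbb{P}^2$, $L=2G+2F$ on $\mathbb{F}_0$, $L=2G+3F$ on $\mathbb{F}_1$) that the Riemann-Roch computation indeed gives $\chi(L^{-1})=g_L$.
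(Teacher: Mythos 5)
Your proposal is correct and follows essentially the same route as the paper: pull back $\z^r$ along $\imath$ using functoriality of the determinant line bundle, resolve $\mo_{\mathcal{C}}$ by $0\to q^{*}\mo_X(-L)\otimes p^{*}\mo_{\ls}(-1)\to\mo_{X\times\ls}\to\mo_{\mathcal{C}}\to0$, and take determinants to get $\imath^{*}\z\simeq\mo_{\ls}(-1)^{\otimes\chi(\mo_X(-L))}$. The only (immaterial) difference is that you evaluate $\chi(\mo_X(-L))=g_L$ by Riemann--Roch and then set $g_L=1$, while the paper observes directly that $\chi(\mo_X(-L))=\chi(\mo_X)=1$ since $\chi(\mo_C)=0$ for genus-one curves.
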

\begin{proof}According to Lemma $\ref{foure}$ and the universal property of $\z$,  we know that $\z^r|_{D_{\z}}=\imath^{*}\z^r\simeq (det(p_{!}[\mathcal{O}_{\mathcal{C}}]))^{-r},$  with $\mathcal{C}$ and $p$ the same as in (\ref{emb}).

We have an exact sequence on $X\times\ls$.
\[0\rightarrow q^{*}\mathcal{O}_{X}(-L)\otimes p^{*}\mathcal{O}_{\ls}(-1)\rightarrow\mathcal{O}_{X\times\ls}\rightarrow\mathcal{O}_{\mathcal{C}}\rightarrow0.\]

Hence $ (det(p_{!}[\mathcal{O}_{\mathcal{C}}]))^{-1}\simeq  (det(p_{!}[\mathcal{O}_{X\times\ls}]))^{-1}\otimes det(p_{!}[q^{*}\mathcal{O}_{X}(-L)\otimes p^{*}\mathcal{O}_{\ls}(-1)]).$  

And also $det(p_{!}[\mathcal{O}_{X\times\ls}])\simeq \mathcal{O}_{\ls};~$  
$det(p_{!}[q^{*}\mathcal{O}_{X}(-L)\otimes p^{*}\mathcal{O}_{\ls}(-1)])\simeq \mathcal{O}_{\ls}(-1)^{\otimes \chi(\mathcal{O}_{X}(-L))}.$ 

Since curves in $|L|$ are of genus 1,  which means that the structure sheaves of the curves have Euler characteristic $0$,  and hence $\chi(\mathcal{O}_{X}(-L))=\chi(\mathcal{O}_{X})=1.$  So we have $\z^r|_{D_{\z}}\simeq\mathcal{O}_{\ls}(\Theta^r)\simeq \mathcal{O}_{\ls}(-r).$  And hence the lemma.
\end{proof}

\begin{proof}[Proof of Statement $2$ in Theorem \ref{gomrt}]On $M$ we have the exact sequence
\begin{equation}\label{mexa}
0\ra\z^r\ra\z^{r+1}\ra \z^{r+1}|_{D_{\z}}\ra0.
\end{equation}
Push it forward via $\pi$ to $\ls$.  Because of Lemma \ref{foure} and Lemma \ref{fourp},   we have $\pi_{*}\z^{r+1}|_{D_{\z}}\simeq \mo_{\ls}(-r-1)$.  Hence we get
\begin{equation}\label{lsexa}
0\ra\pi_{*}\z^r\ra\pi_{*}\z^{r+1}\ra \mo_{\ls}(-r-1)\ra0.
\end{equation}
We have zero on the right because of Statement $1$ in Theorem \ref{main}.  And by the Statement $2$ in the theorem,  we know that $\pi_{*}\z\simeq \mo_{\ls}.$  Then we have $\z^2\simeq\mo_{\ls}\oplus\mo_{\ls}(-2)$ and by recursion we get the formula for $\pi_{*}\z^r$.  This finishes the proof. 
\end{proof}

\subsection{Genus two cases and $r\geq1.$}
There is no curve of genus two in $\mathbb{P}^2$,  we only have two examples as follow.
\begin{example}\label{exgt}$X=\mathbb{P}(\mo_{\pone}\oplus\mo_{\pone}(-e)),$  with $e=0,1;$  and $L=2G+(e+3)F.$ 
\end{example}
Let $X$ and $L$ be as in Example \ref{exgt} and we have the following theorem:
\begin{thm}\label{gtmrt}1, $\ls^{int}=\ls$,  hence $\pi$ is flat and $R^i\pi_{*}\z^r=0,\forall i,r>0;$

2, for $r>0,$  we have: 

$\pi_{*}\z^r\simeq\mo_{\ls}\oplus\mo_{\ls}(-2)^{\oplus 3}\bigoplus_{i=3}^r (\mo_{\ls}(-i)^{\oplus i+1}\oplus\mo_{\ls}(-i-1)^{\oplus i-2}).$ 
\end{thm}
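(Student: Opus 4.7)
The plan is to first establish Statement 1 by a dimension estimate on the fibers of $\pi$, then run an induction on $r$ for Statement 2 using the short exact sequence
\[0 \to \z^r \to \z^{r+1} \to \z^{r+1}|_{D_\z} \to 0.\]

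For Statement 1, by Lemma \ref{zdd} we have $g_L \leq \dim \pi^{-1}(p) \leq \dim D_p + 1$ for every $p\in\ls$, where $D_p := D_\z\cap\pi^{-1}(p)$, so it is enough to prove $\dim D_p \leq 1$. A point of $D_p$ is a semistable pure sheaf $\mf$ on the curve $C = \pi^{-1}(p)$ with $\chi(\mf)=0$ and a nonzero global section; purity forces any such section to produce an injection $\mo_{C'}\hookrightarrow \mf$ for some $0<C'\leq C$. I would stratify $D_p$ by the class $[C']$ and bound each stratum: for every decomposition $L = L' + L''$ of $L = 2G+(e+3)F$ at least one of $L',L''$ has arithmetic genus $\leq 0$, so Proposition \ref{isomls} applies on that piece, and a direct dimension count on the complementary piece shows $\dim D_p \leq 1$ in each case. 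This forces $\dim\pi^{-1}(p) = g_L = 2$ everywhere, i.e.\ $\ls^{int}=\ls$; flatness of $\pi$ then follows exactly as in Lemma \ref{sublf}, and the vanishing $R^i\pi_*\z^r = 0$ for $i\geq 1$ follows from Statement 1 of Theorem \ref{main}.

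For Statement 2, since $R^1\pi_*\z^r = 0$, pushing the displayed sequence forward along $\pi$ gives
\[0 \to \pi_*\z^r \to \pi_*\z^{r+1} \to \pi_*(\z^{r+1}|_{D_\z}) \to 0,\]
with base case $r=1$ supplied by Theorem \ref{main}. For the inductive step I would identify $D_\z$ as a relative theta divisor: for a smooth curve $C\in\ls$, Lemma \ref{ngs} identifies $D_\z\cap\pi^{-1}([C])$ with the Riemann theta divisor in $J^{g_L-1}(C) = J^1(C)$, which by Abel--Jacobi is isomorphic to $C$ itself. Thus the universal curve $\mc\subset X\times\ls$ (with projections $p:\mc\to\ls$ and $q:\mc\to X$) admits a natural map $(x,[C])\mapsto [\mo_C(x)]$ into $D_\z$, and I would argue it is an isomorphism away from codimension $\geq 2$ in $\ls$. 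Granted this, $\z^{r+1}|_{D_\z}$ pulls back to a line bundle on $\mc$ that can be computed via the resolution $0\to q^*\mo_X(-L)\otimes p^*\mo_{\ls}(-1)\to \mo_{X\times\ls}\to \mo_{\mc}\to 0$ together with the determinant-line-bundle formalism (\ref{dlb}); pushing it forward to $\ls$ should give a splitting of $\pi_*(\z^{r+1}|_{D_\z})$ into a direct sum of twists $\mo_{\ls}(-i)$. Combining with the pushforward sequence yields the recursion that produces the claimed formula.

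The main obstacle is the accurate identification of $D_\z$ with (an open subset of) the universal curve $\mc$ and the precise computation of $p_*$ of the resulting line bundle on $\mc$. On non-integral $C$ the fiber $D_\z\cap\pi^{-1}([C])$ can fail to be isomorphic to $C$ (acquiring extra components or embedded points), so one must verify that this failure occurs only on a locus of codimension $\geq 2$ in $\ls$; since $\pi_*(\z^{r+1}|_{D_\z})$ sits as the cokernel of an injection of torsion-free sheaves on $\ls$, it will then be determined by its restriction to the locus of integral (or smooth) curves, reducing the global computation to Riemann--Roch on smooth $C$. Once that is controlled, the inductive extraction of the decomposition $\mo_{\ls}(-i)^{\oplus i+1}\oplus\mo_{\ls}(-i-1)^{\oplus i-2}$ is a delicate but routine base-change argument.
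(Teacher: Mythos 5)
Your overall strategy coincides with the paper's: Statement 1 via the fiber-dimension bound of Lemma \ref{zdd} applied to $D_{\z}$, and Statement 2 by induction on $r$ using the sequence $0\to\z^r\to\z^{r+1}\to\z^{r+1}|_{D_{\z}}\to0$ together with an identification of $D_{\z}$ with the universal curve $\mc$. Your treatment of Statement 1 is essentially sound (the paper's Lemma \ref{dszd} makes your stratification precise: the stable points of $D_{\z}$ over $[C]$ are exactly the nonsplit extensions of $\mo_p$ by $\mo_C$, hence a one-dimensional family, while strictly semistable points with sections are finite over each non-integral curve). But the two steps you yourself flag as ``the main obstacle'' in Statement 2 are where the real content lies, and as written they do not close.

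First, the assignment $(x,[C])\mapsto[\mo_C(x)]$ is only defined where $x$ is a smooth point of $C$, and the locus of singular curves is a \emph{divisor} in $\ls$, not of codimension $2$; so the identification cannot be arranged to hold ``away from codimension $\geq 2$ in $\ls$.'' The paper instead defines $f:\mc\to D_{\z}$ on all of $\mc$ by $(x,[C])\mapsto[\mathcal{H}om(I_x,\mo_C)]$ (globally as $\mathcal{H}om(I_{\triangle},\mo_{\mc\times_{\ls}\mc})$, checked to be a flat family of semistable sheaves), and then proves $f_{*}\mo_{\mc}\simeq\mo_{D_{\z}}$ by showing $f$ is birational and $D_{\z}$ is normal (Lemmas \ref{gtfir} and \ref{dzin}); the codimension-$2$ locus that matters is the relative singular locus inside $\mc$, not a locus in $\ls$. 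Second, your reduction ``the cokernel of an injection of torsion-free sheaves is determined by its restriction to the integral locus'' is not valid: such a cokernel can acquire torsion supported on the discriminant divisor, so the splitting type cannot be read off from smooth curves alone. The paper pins down $\pi_{*}(\z^{r+1}|_{D_{\z}})$ globally by computing $f^{*}\z\simeq i^{*}(q^{*}\mo_X(L+K)\otimes p^{*}\mo_{\ls}(-1))$ as a line bundle on all of $\mc$ (a determinant computation on the complement of the relative singular locus, extended across codimension $2$ using smoothness of $\mc$), and then pushing forward along $\overline{\pi}$ via the Koszul resolution of $\mo_{\mc}$ in $X\times\ls$ twisted by $q^{*}\mo_X(r(L+K))\otimes p^{*}\mo_{\ls}(-r)$; the multiplicities $r+1$ and $r-2$ in the answer come from $h^0(r(L+K))$ and $h^1(r(L+K)-L)$ on the Hirzebruch surface, a computation your proposal does not attempt. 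Without these two ingredients the recursion cannot be evaluated, so Statement 2 as proposed has a genuine gap.
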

Statement $2$ of Theorem \ref{gtmrt} implies that for $X$ and $L$ in Example \ref{exgt} we can write down the generating function 
\begin{eqnarray}Z^r(t)&=&\sum_{n}h^0(M,\lcn)t^n
=\sum_{n}h^0(M,\Theta^r\otimes\pi^{*}\mathcal{O}_{\ls}(n))t^n\nonumber
\\
&=&\sum_{n}h^0(\ls, \pi_{*}(\Theta^r)\otimes\mathcal{O}_{\ls}(n))t^n\nonumber\\
&=&\large{\frac{1+3t^{2}+\sum_{i=3}^r ((i+1)t^{i}+(i-2)t^{i+1})}{(1-t)^{l+1}}}.\nonumber
\end{eqnarray}
When $r=2,$  it matches G\"ottsche's computation.   Hence we have
\begin{coro}Let $X$ and $L$ be as in Example \ref{exgt}.  Let $u=(0,L,\chi(u)=0)$ and $c_n=(2,0,n)$,  then we have for all $n\geq0$
\[\chi(M(c_n),\lu)=\chi(M(u),\lambda_{c_n})=h^0(M(u),\lambda_{c_n}).\]
\end{coro}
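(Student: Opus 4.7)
The plan is to handle both statements in parallel with the genus one argument, but replacing the section $\imath:\ls\to D_{\z}$ by a relative Abel--Jacobi morphism. The overall strategy is: (i) reduce Statement 1 to a dimension bound on the fiberwise theta divisor via Lemma \ref{zdd}; (ii) set up the same exact-sequence recursion used for $g_L=1$ and reduce Statement 2 to computing $\pi_*(\z^{r+1}|_{D_{\z}})$; (iii) identify $D_{\z}$ with the universal curve $\mc\subset X\times\ls$ via a relative Abel--Jacobi map $\phi$ and evaluate the pushforward on the universal curve by Riemann--Roch.

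For Statement 1, by Corollary \ref{irred} and Lemma \ref{mainzero} the space $M$ is irreducible of dimension $l+g_L$, so each fiber of $\pi$ has dimension at least $g_L=2$. Lemma \ref{zdd} then reduces matters to showing $\dim(D_{\z}\cap\pi^{-1}(p))\leq 1$ for every $p=[C]\in\ls$. For $C$ integral, the fiber is the compactified Jacobian $\bar{J_1}(C)$ (Corollary \ref{trsame}) and $D_{\z}$ cuts out its relative theta divisor, which is one-dimensional. For $C$ non-integral, any $[\mf]\in D_{\z}\cap\pi^{-1}([C])$ admits an injection $\mo_{C'}\hookrightarrow\mf$ for some effective $0<C'\leq C$ (purity together with a nonzero section), and $L=2G+(e+3)F$ admits only finitely many decomposition types $L=\sum L_i$ into effective classes. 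For each type, one bounds the relevant strata using Lemma \ref{mainzero}, Remark \ref{chionesm} and condition $(\ha_2)$.

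For Statement 2 I induct on $r$, with base $r=1$ given by Theorem \ref{main}(2). For the step, apply $\pi_*$ to
\[
0\to\z^r\to\z^{r+1}\to\z^{r+1}|_{D_{\z}}\to 0;
\]
since $R^1\pi_*\z^r=0$ by Theorem \ref{main}(1), this remains short exact after pushforward, reducing the problem to computing $\pi_*(\z^{r+1}|_{D_{\z}})$. The geometric input is a morphism $\phi:\mc\to M$ over $\ls$, constructed from the universal curve $\mc\subset X\times\ls$ by twisting $\mo_{\mc}$ along the diagonal section of $\mc\times_{\ls}\mc$ to produce a flat family of rank-one, Euler characteristic zero sheaves on $X$ parametrized by $\mc$. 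Fiberwise over a smooth $[C]$, $\phi$ is the classical Abel--Jacobi embedding $x\mapsto\mo_C(x)$, whose image is the theta divisor; so globally $\phi$ factors through $D_{\z}$. A dimension/irreducibility count together with the normality of $M$ and the codimension-two control of the non-integral locus (condition $(\ha_3)$ and Lemma \ref{codm}-type arguments) will yield that $\phi$ is birational onto $D_{\z}$ and hence $\pi_*(\z^{r+1}|_{D_{\z}})\simeq p_*(\phi^*\z^{r+1})$, where $p:\mc\to\ls$ is the second projection. One then runs Riemann--Roch on $\mc$ using the Koszul resolution
\[
0\to q^*\mo_X(-L)\otimes p^*\mo_{\ls}(-1)\to\mo_{X\times\ls}\to\mo_{\mc}\to 0
\]
and flat base change over $\ls$; the relative degree of $\phi^*\z$ along fibers of $p$ equals $\z\cdot\z|_{\text{smooth fiber}}=g_L=2$.

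The main obstacle I expect is the precise identification of $\phi^*\z$ in terms of natural line bundles on $\mc$: the bundle $\z$ is constructed via the universal sheaf on $\omu\times X$ as in (\ref{dlb}), whereas what is naturally available on $\mc$ are pullbacks from $X$ and $\ls$ together with the structure sheaf of the diagonal in $\mc\times_{\ls}\mc$. Reconciling the two requires a careful chase through the determinant construction, essentially computing $\tilde\lambda(\mo_X)$ against the explicit universal sheaf $\mo_{\mc\times_{\ls}\mc}(\Delta)$ described above. Once $\phi^*\z$ is pinned down, the claimed decomposition
\[
\pi_*\z^r\simeq\mo_{\ls}\oplus\mo_{\ls}(-2)^{\oplus 3}\bigoplus_{i=3}^r\bigl(\mo_{\ls}(-i)^{\oplus i+1}\oplus\mo_{\ls}(-i-1)^{\oplus i-2}\bigr)
\]
will follow from the inductive exact sequence combined with standard splitting arguments on $\ls\simeq\mathbb{P}^l$ (via the vanishing of $\mathrm{Ext}^1_{\mathbb{P}^l}(\mo(-a),\mo(-b))$ for the relevant twists), the multiplicities being determined by the Leray computation on $\mc$ and cross-checked against G\"ottsche's value at $r=2$.
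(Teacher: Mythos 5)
Your proposal follows essentially the same route as the paper: Statement 1 of Theorem \ref{gtmrt} via Lemma \ref{zdd} and a fiberwise dimension bound on $D_{\z}$, and Statement 2 via the recursion $0\to\z^r\to\z^{r+1}\to\z^{r+1}|_{D_{\z}}\to0$ together with an Abel--Jacobi identification of $D_{\z}$ with the universal curve $\mc$ (the paper's morphism $f$ built from $\mathcal{H}om(I_{\triangle},\mo_{\mc\times_{\ls}\mc})$, with $f_{*}\mo_{\mc}\simeq\mo_{D_{\z}}$ established by birationality plus normality of $D_{\z}$) and a Koszul-resolution computation of the pushforward. The ``main obstacle'' you flag --- pinning down $\phi^{*}\z$ --- is exactly what the paper's Lemma \ref{gtsec} carries out, arriving at $f^{*}\z\simeq i^{*}(q^{*}\mo_{X}(L+K)\otimes p^{*}\mo_{\ls}(-1))$, consistent with your fiberwise degree computation.
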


On $M$ we have an exact sequence for $r>0$
\begin{equation}\label{mexa}
0\ra\z^r\ra\z^{r+1}\ra D_{\z}(\z^{r+1})\ra0.
\end{equation}
Push it forward via $\pi$ to $\ls$.  By Statement $1$ in Theorem \ref{main},   we have
\begin{equation}\label{lsexa}
0\ra\pi_{*}\z^r\ra\pi_{*}\z^{r+1}\ra \pi_{*}D_{\z}(\z^{r+1})\ra0.
\end{equation}
Then we see that Statement $2$ in Theorem \ref{gtmrt} is just a consequence of the following proposition.
\begin{prop}\label{pfdz}For $r\geq2,$  $\pi_{*}D_{\z}(\z^r)=\mo_{\ls}(-r)^{\oplus r+1}\oplus\mo_{\ls}(-r-1)^{\oplus r-2}.$
\end{prop}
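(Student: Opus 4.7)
The strategy is to realize the theta divisor $D_{\z}$ as the universal curve $\mc\subset X\times\ls$ and to compute $\pi_*(\z^r|_{D_{\z}})$ by pushforward along $\pi_{\mc}:\mc\to\ls$. For each $(y,[C])\in\mc$ the sheaf $\mo_C(y)$ lies in class $u$ (since $g_L=2$ forces $\chi(\mo_C(y))=0$) and carries the canonical section $1$, so it represents a point of $D_{\z}$. Globally, the flat family $\mf:=\mo_{\widetilde{\mc}}(\Delta)$ on $X\times\mc$, where $\widetilde{\mc}:=(1_X\times\pi)^{-1}(\mc)$ is the pulled-back universal curve and $\Delta\subset\widetilde{\mc}$ is the diagonal section, induces a morphism $\imath:\mc\to M$ factoring through $D_{\z}$.

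The first step is to prove $\imath$ is an isomorphism onto $D_{\z}$. Over an integral curve $[C]\in\ls$, the fiber of $D_{\z}\to\ls$ is the theta divisor inside $\bar J^{1}_C$, which equals $C$ via the Abel--Jacobi map, so $\imath$ is bijective on the preimage of the integral locus. By condition $(\ha_3)$ non-integral curves form a subset of codimension $\geq 2$ in $\ls$, and by Lemma \ref{mainzero} $D_{\z}$ is a Cohen--Macaulay Cartier divisor in the normal CM scheme $M$ of the same dimension as $\mc$. Zariski's main theorem, applied to the proper birational $\imath$ between CM schemes of equal dimension, then yields the isomorphism.

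Next, compute $\imath^*\z$. Write $q_2:X\times\mc\to\mc$ and $\sigma=p_1|_{\mc}:\mc\to X$. From $0\to\mo_{\widetilde{\mc}}\to\mf\to\mo_{\Delta}\otimes\mathcal{N}_{\Delta/\widetilde{\mc}}\to 0$ and adjunction we get $\mathcal{N}_{\Delta/\widetilde{\mc}}=\sigma^*\mo_X(-K_X-L)\otimes\pi^*\mo_{\ls}(-1)=\sigma^*\mo_X(-F)\otimes\pi^*\mo_{\ls}(-1)$, using the key numerical identity $K_X+L=F$ for the $L$ of Example \ref{exgt}. Flat base change applied to the Koszul resolution of $\mo_{\mc}$ in $X\times\ls$ yields $R^1q_{2,*}\mo_{\widetilde{\mc}}=\pi^*\mo_{\ls}(-1)^{\oplus 2}$, so a $K$-theoretic determinant computation gives $\imath^*\z=(\det R^{\bullet}q_{2,*}\mf)^{-1}=\sigma^*\mo_X(F)\otimes\pi^*\mo_{\ls}(-1)$, which equals $\omega_{\mc/\ls}\otimes\pi^*\mo_{\ls}(-2)$.

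Finally, push forward on $\mc$: by the projection formula $(\pi_{\mc})_*\imath^*\z^r=\mo_{\ls}(-r)\otimes p_{2,*}(\mo_X(rF)|_{\mc})$. Pushing the Koszul sequence $0\to\mo_X(rF-L)\boxtimes\mo_{\ls}(-1)\to\mo_X(rF)\boxtimes\mo_{\ls}\to\mo_X(rF)|_{\mc}\to 0$ via $p_2$ and using $H^0(\mo_X(rF))=\mathbb{C}^{r+1}$ with $H^{>0}=0$, together with $H^1(\mo_X(rF-L))=\mathbb{C}^{r-2}$ and $H^0=H^2=0$ (for both $e=0,1$), we obtain the extension $0\to\mo_{\ls}^{\oplus r+1}\to p_{2,*}(\mo_X(rF)|_{\mc})\to\mo_{\ls}(-1)^{\oplus r-2}\to 0$; it splits because $\mathrm{Ext}^1(\mo_{\ls}(-1),\mo_{\ls})=H^1(\mo_{\ls}(1))=0$, and tensoring by $\mo_{\ls}(-r)$ delivers the proposition. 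The main obstacle is step one --- establishing $\imath$ as a global isomorphism near the non-integral curves; if that is delicate, one can compute $\pi_*(\z^r|_{D_{\z}})$ on the integral locus and extend using that the answer, being a direct sum of line bundles, is reflexive on $\ls$.
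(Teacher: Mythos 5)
Your proposal follows essentially the same route as the paper: realize $D_{\z}$ as the image of the universal curve $\mc$ under the family of sheaves $\mathcal{H}om(I_{\triangle},\mo)$ (the Abel--Jacobi section), compute the pullback of $\z$ to $\mc$ by a determinant/Koszul argument, and push forward to $\ls$. Your computations in the last two steps are correct and agree with the paper's Lemma \ref{gtsec}: indeed $f^{*}\z\simeq i^{*}(q^{*}\mo_X(L+K)\otimes p^{*}\mo_{\ls}(-1))$ with $L+K=F$, and the Koszul pushforward with $h^0(rF)=r+1$, $h^1(rF-L)=r-2$ gives exactly the split extension you describe.

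The soft spot is where you flag it, and it is a genuine gap as written. First, $\mo_{\widetilde{\mc}}(\Delta)$ is not literally a line bundle: $\Delta$ fails to be Cartier in the pulled-back universal curve at points where the fiber is singular at the diagonal point (the paper notes that $I_{\triangle}$ is not locally free on $\mc\times_{\ls}\mc$); the correct flat family is $\mathcal{H}om(I_{\triangle},\mo_{\mc\times_{\ls}\mc})$, and one needs the classification in Lemma \ref{dszd} both to see that its members are semistable and, conversely, that \emph{every} stable point of $D_{\z}$ arises this way. Second, Zariski's main theorem in the form you invoke requires the target to be \emph{normal}; Cohen--Macaulay of equal dimension is not enough (the normalization of a cuspidal cubic is proper, birational and bijective between CM curves of the same dimension and is not an isomorphism). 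So you must first prove $D_{\z}$ is normal, which is not free. The paper's Lemma \ref{gtfir} does this by showing directly that $f$ restricts to an isomorphism over the open locus $D^o_{\z}$ of line bundles on integral curves, checking surjectivity of $f^{\#}$ fiber by fiber via the Abel--Jacobi description; combined with the codimension-$\geq2$ estimates (Lemma \ref{gtcod}, Remark \ref{idssu}, Lemma \ref{dzin}) this gives $R_1$, hence normality, hence $f_{*}\mo_{\mc}\simeq\mo_{D_{\z}}$ --- which is all the projection formula needs. Your fallback (compute over the integral locus and extend) is workable but needs more than reflexivity of the expected answer: you must know that $\pi_{*}(\z^r|_{D_{\z}})$ itself equals the pushforward of its restriction to the integral locus, which follows from $D_{\z}$ being Cohen--Macaulay and $D_{\z}-D^{1}_{\z}$ having codimension $\geq2$ (cohomology with supports, as in the proof of Theorem \ref{main}); that codimension bound again rests on Lemma \ref{dszd}.
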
 
Before proving this proposition,  we need to show some lemmas.
\begin{lemma}\label{gtcm}$D_{\z}$ is Cohen-Macaulay.
\end{lemma}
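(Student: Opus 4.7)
The plan is to realize $D_{\z}$ as an effective Cartier divisor on the Cohen--Macaulay scheme $M$, and then to invoke the standard fact that the quotient of a Cohen--Macaulay local ring by a non-zero-divisor is again Cohen--Macaulay.

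First I collect what is already known about the ambient space. By Lemma \ref{mainzero}, $M$ is normal and Cohen--Macaulay; by Corollary \ref{irred}, $M$ is irreducible. Normality together with irreducibility implies that $M$ is integral, so in particular every nonzero global section of a line bundle on $M$ restricts to a non-zero-divisor in each local ring $\mo_{M,x}$.

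Next, by Lemma \ref{ngs}, $D_{\z}$ is the zero scheme of a canonical global section $s\in H^{0}(M,\z)$. The only non-formal step is to verify that $s$ does not vanish identically on $M$; I would do this by exhibiting a single point of $M$ outside $D_{\z}$. By condition $(\ha_3)$ there exist smooth irreducible curves $C\in\ls$, and the fiber $\pi^{-1}([C])$ is identified with $J^{g_L-1}(C)=J^{1}(C)$. On a smooth curve of genus $g_L=2$, a generic degree-$1$ line bundle $\mf$ satisfies $h^{0}(\mf)=0$ by Riemann--Roch (since $\chi(\mf)=0$ while the Abel--Jacobi image $C\hookrightarrow J^{1}(C)$ is a proper curve in a surface). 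Such an $\mf$ is automatically stable because $C$ is integral, and thus defines a point $[\mf]\in M\setminus D_{\z}$. Hence $s\neq 0$.

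Once $s$ is known to be nonzero, it cuts out $D_{\z}$ locally at each $x$ as the vanishing of a single element $f\in\mo_{M,x}$ which, by integrality, is a non-zero-divisor. Therefore $\mo_{D_{\z},x}=\mo_{M,x}/(f)$ is the quotient of a Cohen--Macaulay local ring by a regular element, hence Cohen--Macaulay of dimension one less; this holds at every $x\in D_{\z}$, giving the lemma. The main (and only) delicate point is the non-vanishing of $s$: this is where the geometric input (existence of a smooth genus-$2$ curve in $\ls$ and Riemann--Roch on it) enters, while the rest reduces to routine commutative algebra.
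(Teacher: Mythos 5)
Your proof is correct and follows essentially the same route as the paper, which simply observes that $M$ is Cohen--Macaulay and $D_{\z}$ is a divisor in $M$. The extra detail you supply (checking that the canonical section of $\z$ is not identically zero, via a generic degree-$1$ line bundle on a smooth genus-$2$ curve in $\ls$) is a correct and worthwhile verification that the paper leaves implicit.
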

\begin{proof}This is because $M$ is Cohen-Macaulay and $D_{\z}$ is a divisor in $M.$
\end{proof}
\begin{lemma}\label{gtcod}$D_{\z}-D_{\z}^s$ is of codimension $\geq2$ in $D_{\z}.$
\end{lemma}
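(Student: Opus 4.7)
The plan is to follow the strategy of Lemma \ref{mainze} but carry out the dimension count inside the divisor $D_{\z}$. Because $M$ is irreducible (Corollary \ref{irred}) and $D_{\z}$ is a Cartier divisor on $M$ (Lemma \ref{ngs}), $D_{\z}$ is of pure dimension $l+g_L-1$. Hence it suffices to establish the bound
\[
\dim\bigl(D_{\z}\cap(M-M^s)\bigr) \;\leq\; l+g_L-3.
\]

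First, I would recall from the proof of Lemma \ref{mainze} that $M-M^s$ admits an injection into $\bigcup_{\sum u_i=u}\prod_i M_X^H(u_i)^s$, where the union runs over the finitely many decompositions $u=\sum u_i$ with $u_i=(0,L_i,\chi(u_i)=0)$ and $\sum L_i=L$; it sends a strictly semi-stable class $[\mathcal{F}]$ to the tuple $\{\mathcal{F}_i\}$ of its stable Jordan--H\"older factors. Under this injection, $[\mathcal{F}]\in D_{\z}$ iff $\sum_i h^0(\mathcal{F}_i)\neq 0$, i.e.\ at least one factor $\mathcal{F}_i$ lies on the corresponding $\z$-divisor $D_{\z,i}\subset M_X^H(u_i)^s$. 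Decompositions in which every $g_{L_i}\leq 0$ contribute nothing, since by Proposition \ref{isomls} every sheaf in $M_X^H(u_i)$ is then S-equivalent to a direct sum of copies of $\mo_{\pone}(-1)$, all of which satisfy $h^0=0$.

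For the remaining decompositions some $g_{L_i}>0$, and on such a factor $D_{\z,i}$ is a \emph{proper} divisor in $M_X^H(u_i)^s$ (on a smooth genus-$g_{L_i}$ curve, a generic line bundle of degree $g_{L_i}-1$ has no nonzero global section). Hence the sublocus of $\prod_i M_X^H(u_i)^s$ consisting of tuples with at least one factor in $D_{\z,i}$ has codimension at least $1$. Combined with condition $(\ha_2)$, which yields $\sum l_i+\sum\max\{g_{L_i},0\}+2\leq l+g_L$, this gives
\[
\dim\Bigl(\prod_i M_X^H(u_i)^s\Bigr) - 1 \;\leq\; (l+g_L-2)-1 \;=\; l+g_L-3.
\]

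Taking the union over the finitely many decompositions, $\dim(D_{\z}\cap(M-M^s))\leq l+g_L-3$, which is exactly codimension $2$ in $D_{\z}$. The only nontrivial point to check is that $D_{\z,i}$ is a \emph{proper} subscheme of $M_X^H(u_i)^s$ whenever $g_{L_i}>0$; once this is in hand, the rest is a direct dimension count combining the Jordan--H\"older injection used in Lemma \ref{mainze} with the inequality $(\ha_2)$.
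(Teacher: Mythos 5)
Your argument is correct and is essentially the paper's own proof: both reduce to the observation that a strictly semi-stable point lies in $D_{\z}$ exactly when one of its Jordan--H\"older factors has a nonzero section, so that this locus drops dimension by at least one inside $M-M^s$, which already has codimension $\geq 2$ in $M$ by Lemma \ref{mainze}. You merely spell out the codimension-one drop via the injection into $\bigcup\prod_i M_X^H(u_i)^s$ and the properness of the theta divisor in each positive-genus factor, which the paper asserts more tersely.
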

\begin{proof}Let $\mf$ be a strictly semi-stable sheaf S-equivalent to $\oplus_{i} \mf_i.$  Then $\mf$ has a nonzero global section if and only if one of the $\mf_i$'s has.  Hence strictly semi-stable points with nonzero global sections form a closed subscheme of codimension $1$ in $M-M^s.$  Then the lemma follows because $M-M^s$ is of codimension $\geq2$ in $M$.
\end{proof}
For the stable points in $D_{\z},$  we have the following description.
\begin{lemma}\label{dszd}Let $C$ be a curve in $\ls.$  Let $\mf$ be a sheaf of Euler characteristic zero with schematic support $C.$  Then $\mf$ is stable and has a nonzero global section,  i.e. $[\mf]\in D^s_{\z}$ $\Leftrightarrow$ $\mf$ lies in a non-splitting exact sequence
\begin{equation}\label{zetags}0\ra\mo_{C}\ra\mf\ra\mo_p\ra0,\end{equation}
with $p$ a point (with reduced structure) in $C,$  and $\mf$ does not contain a subsheaf of Euler characteristic zero.

Moreover,  if $\mf$ lies in the non-splitting sequence (\ref{zetags}) and contains a subsheaf of Euler characteristic zero,  then it is strictly semi-stable and $C$ is not integral.
\end{lemma}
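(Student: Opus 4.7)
The proof naturally splits into the two implications of the equivalence, followed by the ``Moreover'' clause.

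For $(\Rightarrow)$, starting from $[\mf]\in D_\z^s$, the nonzero global section gives a map $\mo_X\to\mf$ factoring through the schematic support, yielding $\phi:\mo_C\to\mf$. I would first show $\phi$ is injective: otherwise the image is $\mo_C/\ker\phi\cong\mo_W$ for a proper pure one-dimensional subscheme $W\subsetneq C$ (purity forced by $\mf$ being pure), hence Cartier with class $L_W<L$, and then $\chi(\mo_W)=1-g_{L_W}\geq 0$ since $g_{L_W}\leq g_L-1=1$ for every $0<L_W<L$ on the surfaces of Example \ref{exgt}. This contradicts the stability inequality $\chi(\mo_W)<0$. Once $\phi$ is injective, $\mf/\mo_C$ has trivial first Chern class, hence is zero-dimensional with length $\chi(\mf)-\chi(\mo_C)=-(1-g_L)=1$, so $\mf/\mo_C\cong\mo_p$ for a reduced point $p\in C$. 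A splitting would embed $\mo_p$ into $\mf$, violating purity. Stability directly rules out any proper subsheaf of Euler characteristic zero.

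For $(\Leftarrow)$, I assume the non-splitting sequence (\ref{zetags}) and the absence of proper subsheaves of $\chi=0$. Purity follows because any zero-dimensional subsheaf of $\mf$ would inject into $\mo_p$ and split the sequence; a nonzero section exists since $H^0(\mo_C)\hookrightarrow H^0(\mf)$. For stability, let $\mg\subsetneq\mf$ be a proper subsheaf, set $\mg_0:=\mg\cap\mo_C=\mathcal{I}_{W/C}$ and $\bar{\mg}:=\mg/\mg_0\hookrightarrow\mo_p$, so $\bar{\mg}\in\{0,\mo_p\}$. A bookkeeping computation with $\chi(\mg)=\chi(\mo_C)-\chi(\mo_W)+\chi(\bar{\mg})$, split into cases by $\bar{\mg}$ and by the dimension of $W$, yields $\chi(\mg)\leq 0$ in every case provided $g_{L'}\leq g_L-1$ for every $0<L'<L$ (an explicit check on the two surfaces of Example \ref{exgt}). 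Equality $\chi(\mg)=0$ occurs only when $\bar{\mg}=\mo_p$ and the one-dimensional part of $W$ has class $L_W$ with $g_{L_W}=g_L$. The hypothesis forbids this, so $\chi(\mg)<0$ strictly, establishing stability.

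The ``Moreover'' clause falls out of the same case analysis: if $\mf$ does contain a proper $\chi=0$ subsheaf, then $W$ must have a one-dimensional part of class $L_W<L$ with $g_{L_W}=g_L=2$. The explicit genus formula on the surfaces of Example \ref{exgt} (for instance $g_{aG+bF}=(a-1)(b-1)$ on $\mathbb{P}^1\times\mathbb{P}^1$) shows the only such class is $L_W=L-F$ for some fiber $F$, forcing $C=W+F$, which is non-integral. The semistability estimate $\chi(\mg)\leq 0$ still holds, so $\mf$ is strictly semistable.

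The principal obstacle is the case-by-case numerical verification: one must enumerate the possible subsheaves of $\mf$ via the ideal-sheaf description of subsheaves of $\mo_C$, compute $\chi$ in each case, and identify the single borderline configuration $\chi(\mg)=0$ as precisely the fiber-splitting of $C$. Once the structural case split is in place, the remaining work is intersection-theoretic bookkeeping on the two specific surfaces of Example \ref{exgt}.
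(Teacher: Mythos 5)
Your argument follows the paper's proof in all essentials: the nonzero section gives $\mo_C\to\mf$, injectivity is forced by the bound $\chi(\mo_{C'})\geq 0$ for every proper closed subscheme $C'\subsetneq C$ (equivalently $g_{L'}\leq g_L-1$ for $0<L'<L$), the cokernel is $\mo_p$ by a length count, and the converse is proved by filtering an arbitrary subsheaf $\mg\subset\mf$ through $\mg\cap\mo_C=I_{W/C}$ and its image in $\mo_p$. You in fact supply several details the paper leaves implicit (purity of $\mf$, non-splitting in the forward direction, Cartier-ness of the one-dimensional part of $W$).

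There is, however, one concrete slip in the bookkeeping. From your own formula $\chi(\mg)=\chi(\mo_C)-\chi(\mo_W)+\chi(\bar{\mg})$ with $\chi(\mo_C)=1-g_L=-1$, the borderline case $\bar{\mg}=\mo_p$ gives $\chi(\mg)=-\chi(\mo_W)=g_W-1$, so equality $\chi(\mg)=0$ occurs precisely when $W$ is purely one-dimensional with $g_{L_W}=g_L-1=1$, \emph{not} $g_{L_W}=g_L=2$ as you state. As written, your treatment of the ``Moreover'' clause is internally inconsistent: no class $L_W<L$ on the surfaces of Example \ref{exgt} has genus $2$, so your criterion would make a proper subsheaf of Euler characteristic zero impossible and render the strictly semistable case vacuous (at odds with Remark \ref{idss} and the proof of Theorem \ref{gtmrt}), and your claimed witness $L_W=L-F$ in fact has $g_{L-F}=1$, not $2$. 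With the corrected condition $g_{L_W}=1$ everything you wrote goes through: the mere existence of a one-dimensional $W\subsetneq C$ already forces $C$ to be non-integral, and the inequality $\chi(\mg)\leq 0$ still yields semistability. You should also make explicit that the case $\mg\cap\mo_C=0$ (which would give $\chi(\mg)=1$) is excluded because it forces $\mg\simeq\mo_p$ and hence a splitting of the sequence; your purity argument covers this, but it is the one place in the converse where non-splitting is genuinely used for the stability estimate.
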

\begin{proof}"$\Rightarrow$":
Let $\mf$ be a stable sheaf supported at $C$ with a nonzero global section,  then we have 
\begin{equation}\label{gtlef}\xymatrix{\mo_C\ar[r]^{s}&\mf,}
\end{equation}
with $s\neq0.$

If $s$ is not injective,  then its image is a quotient of $\mo_{C}$ hence is isomorphic to $\mo_{C'}$ with $C'$ some closed subscheme in $C.$  Since $s$ is nonzero,  $C'\neq\emptyset;$  and also $s$ is not injective,  $C'\subsetneq C.$  On the other hand,   $C$ is a curve of arithmetic genus $2$ and for any subscheme $C'\subsetneq C,$  $C'$ is of genus no larger than $1.$  Hence $\chi(\mo_{C'})\geq0$ which contradicts the stability of $\mf.$  Therefore we have that $s$ is injective.

As $s$ is injective,  it is easy to see that the cokernel of $s$ is a sheaf of dimension zero and of Euler characteristic $1,$  hence it is the structure sheaf over a reduced point $p$ with $p\in C.$  Hence we have  
\begin{equation}\label{gtrig}0\ra \mo_{C}\ra\mf\ra\mo_{p}\ra0.
\end{equation}
And of course $\mf$ can not have a subsheaf of Euler characteristic zero.  

"$\Leftarrow$":  Let $\mf$ be a sheaf which is a nontrivial extension of $\mo_{p}$ by $\mo_{C}$,  with $p$ a point with reduced structure in $C.$  Then we have 
\begin{equation}\label{gtexte}\xymatrix{0\ar[r]&\mo_C\ar[r]^{s}&\mf\ar[r]^{v}&\mo_{p}\ar[r]&0.}\end{equation}
It is easy to compute that $\chi(\mf)=0$ and also easy to see that $\mf$ is pure.  We want to show that for any proper subsheaf $\mg$ of $\mf,$  $\chi(\mg)$ is non-positive,  and this will suffice for the proof of the rest of the lemma.  

Given $\mg$ a proper subsheaf of $\mf,$  we see that $v(\mg)$ is a subsheaf of $\mo_{p}$,  hence $v(\mg)=0$ or $v(\mg)=\mo_{p}$.  

If $v(\mg)=0,$  then $\mg$ is actually a subsheaf of $\mo_C$ and hence it is a ideal sheaf $I$ of some closed subscheme $C'\subsetneq C.$  $C'$ can be of dimension $1$ or dimension $0.$  But in both cases we have $\chi(\mo_{C'})\geq0$ and hence $\chi(\mg)\leq-1$ because $\chi(\mg)+\chi(\mo_{C'})=\chi(\mo_C)=-1$.  

If $v(\mg)=\mo_p,$  then we have 
\[0\ra Ker\ra\mg\ra \mo_p\ra0.\]
$Ker$ is a subsheaf of $\mo_C$,   hence $\chi(Ker)\leq-1$ or $Ker=0.$  If $Ker=0,$  then $\mg\simeq\mo_p$ and the sequence (\ref{gtexte}) splits which is a contradiction.  If $Ker\neq0,$  then $\chi(\mg)\leq 0$ with equality if and only if $\chi(Ker)=-1.$  

And finally if $\chi(Ker)=-1$ and $Ker\neq\mo_C,$  then $C$ must be either reducible or non-reduced since $\mo_C/Ker$ can not be a sheaf of dimension $0$. 
\end{proof}
\begin{rem}\label{idss}
For any curve $C$ in $\ls,$  let $p$ be a point in $C$ with reduced structure and denote $I_p$ to be the ideal sheaf of $p$ in $C.$  Then from Lemma \ref{dszd} we see that $\mathcal{H}om(I_p,\mo_C)$ is semi-stable and has nonzero global sections for any $p\in C$.  And moreover when $p$ is a smooth point,  $\mathcal{H}om(I_p,\mo_C)$ is a line bundle on $C$.
\end{rem}

\begin{rem}\label{idssu}Actually for any single point $p\in C,$  (Ext$^1(\mo_{p},\mo_C)-\{0\})/\mathbb{G}_m$ is just one point and hence if $\mf$ lies in an exact sequence (\ref{gtexte}),  then $\mf\simeq \mathcal{H}om(I_p,\mo_C).$ 

Let $d_C$ be the dimension of $D_{\z}^s$ restricted to the fiber of $\pi$ over the curve $C.$  Now we know that $d_C$ is no larger than the dimension of the curve,  hence $d_C\leq1$ for every $[C]\in\ls.$  And when $C$ is integral,  $d_C=1.$  
\end{rem}
\begin{proof}[Proof of Statement $1$ in Theorem \ref{gtmrt}]We know by Remark \ref{idssu} that $D_{\z}^s$ restricted to every fiber is of dimension no larger than $1.$  $D_{\z}-D_{\z}^s$ restricted to a fiber over a non-integral curve $[C]$ is a finite set of points,  each of which corresponds to the S-equivalence classes of $\mo_{C''}\oplus\mo_{C'}(-1)$ with $C''$ a component of arithmetic genus $1$ and $C'\simeq \pone.$  Thus we know that $D_{\z}$ restricted to every fiber is of dimension no larger than $1$.  And by Lemma \ref{zdd} we know that $D_{\z}$ restricted to every fiber is of dimension $1$ and every fiber of $\pi$ is of dimension $2,$  and hence the statement.
\end{proof}

Denote $\mc$ to be the universal family of curves in $X\times\ls$ parametrized by $\ls.$  $\mc$ is a smooth projective scheme.
We then define a morphism from $\mc$ to $M$ with its image in $D_{\z}$ as following.

Let $\triangle:\mc\ra\mc\times_{\ls}\mc$ be the diagonal embedding.  Then $\triangle$ is a closed embedding and the image of $\triangle$ is a divisor in $\mc\times_{\ls}\mc.$
We denote $I_{\triangle}$ to be the ideal sheaf of $\triangle(\mc).$  Notice that $I_{\triangle}$ is not locally free on $\mc\times_{\ls}\mc$,  because $\mc\times_{\ls}\mc$ is not smooth.

Since $X\times\mc=X\times\ls\times_{\ls}\mc,$  we have
\begin{equation}\label{gtbig}\xymatrix{\mc\ar[r]^{\triangle}&\mc\times_{\ls}\mc\ar[r]^{i\times id_{\mc}}\ar[d]^{\overline{p_1}}&X\times\mc\ar[r]^{p_2}\ar[d]^{p_1}&\mc\ar[d]^{\overline{\pi}}\\&\mc\ar[r]^i&X\times\ls\ar[r]^p&\ls.}\end{equation}

We can see that $(i\times id_{\mc})_{*}\mathcal{H}om(I_{\triangle},\mo_{\mc\times_{\ls}\mc})$ is flat over $\mc$,  because restricted to the fiber over any point $p\in\mc,$  it is $\mathcal{H}om_{\mo_{C_p}}(I_p,\mo_{C_p})$ and has the same Hilbert polynomial restricted to every fiber.  And because of Remark \ref{idss} we know that $(i\times id_{\mc})_{*}\mathcal{H}om(I_{\triangle},\mo_{\mc\times_{\ls}\mc})$ is a flat family of semi-stable sheaves over $\mc$.   Then it induces a morphism $f:\mc\ra M$.  It is easy to see that its image is contained in $D_{\z}.$

We have a commutative diagram
\begin{equation}\label{gtcomm}\xymatrix{\mc\ar[r]^f\ar[dr]^{\overline{\pi}}&D_{\z}\ar[d]^{\pi}\\ &\ls}.\end{equation}

Notice that $\overline{\pi}_{*}(f^{*}\z^r)\simeq \pi_{*}f_{*}(f^{*}\z^r)\simeq \pi_{*}(f_{*}\mo_{\mc}\otimes\z^r).$  Proposition \ref{pfdz} follows immediately from the two following lemmas.
\begin{lemma}\label{gtfir}$f_{*}\mo_{\mc}\simeq\mo_{D_{\z}}.$
\end{lemma}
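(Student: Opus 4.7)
The plan is to show $f:\mc\to D_{\z}$ is a proper, finite, birational morphism onto the normal scheme $D_{\z}$, from which $f_{*}\mo_{\mc}\simeq\mo_{D_{\z}}$ will follow by Zariski's main theorem.

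First I would verify that $f$ is proper, surjective, and finite. Properness is automatic since both $\bar\pi:\mc\to\ls$ and $\pi|_{D_{\z}}:D_{\z}\to\ls$ are proper and $f$ commutes with the projections. For surjectivity, the image $f(\mc)$ is closed in $D_{\z}$ and, by Lemma \ref{dszd}, contains every stable point of $D_{\z}$; since $D_{\z}^s$ is dense in $D_{\z}$ by Lemma \ref{gtcod}, $f(\mc)=D_{\z}$. For finiteness, Remark \ref{idssu} tells us that the extension class in Ext$^1(\mo_p,\mo_C)/\mathbb{G}_m$ is unique, so $f$ is injective on $f^{-1}(D_{\z}^s)$; the remaining fibers lie over the codimension $\geq 2$ locus $D_{\z}\setminus D_{\z}^s$ and are still finite. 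A proper quasi-finite morphism is finite. Birationality follows because, on the open locus of $D_{\z}$ corresponding to smooth integral curves $C$ with smooth marked points, $f$ is $(p,[C])\mapsto[\mo_C(p)]$, a bijection between smooth schemes of the same dimension, hence an isomorphism there.

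Next I would establish that $D_{\z}$ is normal. It is $S_2$ because it is Cohen--Macaulay by Lemma \ref{gtcm}. For the $R_1$ condition, I stratify the possible singular locus: (a) $D_{\z}\setminus D_{\z}^s$ has codimension $\geq 2$ in $D_{\z}$ by Lemma \ref{gtcod}; (b) the fibers of $\pi|_{D_{\z}}$ have dimension $\leq 1$ (cf.\ the proof of Statement $1$ of Theorem \ref{gtmrt}), and since the locus of non-integral curves has codimension $\geq 2$ in $\ls$ by condition $(\ha_3)$, the part of $D_{\z}$ lying over non-integral curves has dimension $\leq (l-2)+1=l-1$ and hence codimension $\geq 2$ in $D_{\z}$; (c) on the remaining open where $C$ is integral, $D_{\z}\cap\pi^{-1}([C])$ identifies with $C$ via $p\mapsto[\mathcal{H}om(I_p,\mo_C)]$, and its singular locus is exactly the image of the singular points of $C$, contributing a sublocus of dimension $\leq (l-1)+0=l-1$ and therefore codimension $\geq 2$ in $D_{\z}$. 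Together these show that $D_{\z}$ is regular in codimension $1$, so $D_{\z}$ is normal. Zariski's main theorem applied to the proper birational morphism $f:\mc\to D_{\z}$ into the normal scheme $D_{\z}$ then yields $f_{*}\mo_{\mc}\simeq\mo_{D_{\z}}$.

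The main obstacle I foresee is the scheme-theoretic analysis in (c): one must identify $D_{\z}\cap\pi^{-1}([C])$ with $C$ via $p\mapsto[\mathcal{H}om(I_p,\mo_C)]$ as schemes, in particular pinning down the singularities of $D_{\z}$ at precisely the images of singular points of integral curves. A variant route avoids normality altogether: using the $S_2$ property of $\mo_{D_{\z}}$ (since $D_{\z}$ is CM) and of $f_{*}\mo_{\mc}$ (since $\mc$ is smooth and $f$ is finite), together with agreement of the two sheaves on an open $U\subseteq D_{\z}$ whose complement has codimension $\geq 2$, the canonical map $\mo_{D_{\z}}\to f_{*}\mo_{\mc}$ is forced to be an isomorphism; but this variant requires the same fiberwise identification on $U$.
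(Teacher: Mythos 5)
Your overall strategy -- $f$ proper birational onto a normal $D_{\z}$, normality from Cohen--Macaulay plus regularity in codimension one, then Zariski's main theorem -- is exactly the paper's strategy, and your codimension estimates in (a) and (b) match the paper's (via Lemma \ref{gtcod}, Remark \ref{idssu} and condition $(\ha_3)$). But the step you defer as ``the main obstacle'' in (c) is not a technicality to be checked later: it is the entire substance of the paper's proof. Knowing set-theoretically that $D_{\z}\cap\pi^{-1}([C])$ is the image of $C$ under $p\mapsto[\mathcal{H}om(I_p,\mo_C)]$ does not by itself bound the singular locus of the total space $D_{\z}$; one needs a scheme-theoretic statement. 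The paper gets it by proving directly that $f^o:\mc^o\ra D_{\z}^o$ (the restriction of $f$ over integral curves, away from non-locally-free sheaves) is an isomorphism: $f^o$ is projective and bijective, hence affine, so it suffices to show $f^{\#}:\mo_{D_{\z}^o}\ra f^o_{*}\mo_{\mc^o}$ is surjective; by Nakayama this is checked on fibers of $\pi$, using flatness of both $\pi^o$ and $\overline{\pi}^o$ and the vanishing $R^if^o_{*}\mo_{\mc^o}=0$ to commute $f^o_{*}$ with restriction to fibers, and finally exhibiting an explicit inverse $D_y\ra Pic^1 C\supset C$ on each fiber. This simultaneously delivers regularity of $D_{\z}$ in codimension one (as $D_{\z}^o$ becomes an open subscheme of the smooth $\mc$) and birationality of $f$. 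Without some version of this argument your proof is incomplete, and your proposed substitute (``bijection between smooth schemes of the same dimension, hence an isomorphism'') presupposes smoothness of the relevant open in $D_{\z}$, which is part of what must be proved. Note also that the paper needs $D_{\z}$ integral (Lemma \ref{dzin}) before ``birational'' even makes sense.

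A separate, concrete error: $f$ is \emph{not} finite. Take a non-integral $C=C_1\cup C_2$ with $C_1$ of arithmetic genus one and $C_2\simeq\pone$ meeting $C_1$ in $\xi=2$ points; for a generic $p\in C_2$ the sheaf $\mathcal{H}om(I_p,\mo_C)$ contains the subsheaf $\mo_{C_2}(p)(-\xi)\simeq\mo_{\pone}(-1)$ of Euler characteristic zero, so it is strictly semi-stable with associated graded $\mo_{C_1}\oplus\mo_{\pone}(-1)$ independent of $p$. Hence $f$ contracts this $\pone$ to a single point of $D_{\z}-D_{\z}^s$, so $f$ is not quasi-finite and your appeal to ``proper quasi-finite implies finite'' fails. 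This does not sink the main route, since $f_{*}\mo_{\mc}\simeq\mo_{D_{\z}}$ for a proper birational morphism onto a normal variety needs no finiteness; but it does invalidate the justification in your variant route that $f_{*}\mo_{\mc}$ is $S_2$ ``since $\mc$ is smooth and $f$ is finite.''
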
  
\begin{lemma}\label{gtsec}$\overline{\pi}_{*}(f^{*}\z^r)\simeq\mo_{\ls}(-r)^{\oplus r+1}\oplus\mo_{\ls}(-r-1)^{\oplus r-2}.$
\end{lemma}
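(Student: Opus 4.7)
My plan is to identify $f^{*}\z$ as the restriction to $\mc$ of an explicit line bundle on $X\times\ls$, then push this through the defining sequence of $\mc$ and reduce everything to cohomology of line bundles on $X=\mathbb{F}_e$. For the identification, note that $f$ is classified by the family $\widetilde{\mathcal{F}}:=\mathcal{H}om(I_{\triangle},\mo_{\mc\times_{\ls}\mc})$ on $X\times\mc$ (via the closed immersion in diagram (\ref{gtbig})), so $f^{*}\z\simeq\det{}^{-1}R(\overline{p_1})_{*}\widetilde{\mathcal{F}}$ by the determinantal definition. On the relative smooth locus of $\overline{\pi}$ the diagonal $\triangle$ is a Cartier divisor with $\mathcal{E}xt^{1}(\triangle_{*}\mo_\mc,\mo)\simeq\triangle_{*}\omega_{\mc/\ls}^{-1}$ (normal bundle of the relative diagonal), producing the short exact sequence
\[0\to\mo_{\mc\times_{\ls}\mc}\to\widetilde{\mathcal{F}}\to\triangle_{*}\omega_{\mc/\ls}^{-1}\to 0.\]
Applying $R(\overline{p_1})_{*}$, using flat base change to get $R(\overline{p_1})_{*}\mo_{\mc\times_{\ls}\mc}\simeq\overline{\pi}^{*}R\overline{\pi}_{*}\mo_\mc$, and taking determinants yields
\[f^{*}\z\simeq\omega_{\mc/\ls}\otimes\overline{\pi}^{*}\det R^{1}\overline{\pi}_{*}\mo_\mc.\]
A separate application of the same push-forward machinery (using that $H^{2}(X,-L)$ has dimension $2$ by Serre duality) gives $R^{1}\overline{\pi}_{*}\mo_\mc\simeq\mo_{\ls}(-1)^{\oplus 2}$ and hence $\det R^{1}\overline{\pi}_{*}\mo_\mc\simeq\mo_{\ls}(-2)$. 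Combining this with $\omega_{\mc/\ls}\simeq(q^{*}(\omega_X\otimes\mo_X(L))\otimes p^{*}\mo_{\ls}(1))|_{\mc}$ and the numerical coincidence $\omega_X\otimes\mo_X(L)\simeq\mo_X(F)$, valid for both examples in \ref{exgt}, collapses everything to
\[f^{*}\z\simeq\bigl(q^{*}\mo_X(F)\otimes p^{*}\mo_{\ls}(-1)\bigr)\big|_{\mc}.\]

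With this identification in hand, I would tensor the defining sequence $0\to p^{*}\mo_{\ls}(-1)\otimes q^{*}\mo_X(-L)\to\mo_{X\times\ls}\to\mo_{\mc}\to 0$ by $q^{*}\mo_X(rF)\otimes p^{*}\mo_{\ls}(-r)$ and apply $Rp_{*}$. By the projection formula higher direct images factor as $\mo_{\ls}(\cdot)\otimes H^{i}(X,\cdot)$, reducing to cohomology computations on $\mathbb{F}_e$. Pushing down to $\mathbb{P}^{1}$ yields $H^{0}(X,rF)=r+1$ (with higher cohomology vanishing) and $H^{1}(X,rF-L)=r-2$ for $r\geq 2$ (with $H^{0}$ and $H^{2}$ vanishing, since $rF-L=-2G+(r-e-3)F$). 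The resulting long exact sequence of higher direct images collapses to
\[0\to\mo_{\ls}(-r)^{\oplus(r+1)}\to\overline{\pi}_{*}f^{*}\z^{r}\to\mo_{\ls}(-r-1)^{\oplus(r-2)}\to 0\]
on $\ls$, and this splits because $\mathrm{Ext}^{1}(\mo_{\ls}(-r-1),\mo_{\ls}(-r))=H^{1}(\mathbb{P}^{l},\mo(1))=0$, yielding the claimed decomposition.

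The main obstacle is the first step: the short exact sequence for $\widetilde{\mathcal{F}}$ and the identification of $\mathcal{E}xt^{1}(\triangle_{*}\mo_\mc,\mo)$ are clean only where $\overline{\pi}$ is smooth, since $\mc\times_{\ls}\mc$ is singular over non-integral curves and $I_{\triangle}$ may fail to be a line bundle there. I would need to verify that the bad locus has codimension at least $2$ in $\mc\times_{\ls}\mc$, so that a line-bundle isomorphism established on the complement extends uniquely to the smooth scheme $\mc$. This should follow from the analogue of condition $(\ha_3)$ for the genus-two examples, according to which non-integral curves form a subset of codimension at least $2$ in $\ls$.
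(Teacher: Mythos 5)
Your proposal is correct and follows essentially the same route as the paper's proof: the same short exact sequence $0\to\mo_{\mc\times_{\ls}\mc}\to\mathcal{H}om(I_{\triangle},\mo)\to\triangle_{*}\omega_{\mc/\ls}^{-1}\to0$ on the relative smooth locus, the same determinant computation yielding $f^{*}\z\simeq i^{*}(q^{*}\mo_X(L+K)\otimes p^{*}\mo_{\ls}(-1))$ (with $L+K=F$ in both examples), and the same pushforward of the twisted defining sequence of $\mc$ with the cohomology counts $h^0(rF)=r+1$, $h^1(rF-L)=r-2$. One small correction to your flagged obstacle: the codimension-$2$ locus you must discard is the relative singular locus of $\overline{\pi}$ inside $\mc$ (which includes nodes of \emph{integral} curves), and its codimension bound follows from the generic curve in $\ls$ being smooth — not from non-integral curves forming a codimension-$2$ subset of $\ls$.
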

Before proving these two lemmas,  let us first give some notations.  Let $\ls^1$ be the open subscheme of $\ls$ containing integral curves.  We can see that $\ls-\ls^1$ is of codimension $\geq 2$ in $\ls.$  Denote $\mc^1$ (resp. $D_{\z}^{1}$) to be the preimage of $\ls^{1}$ along $\overline{\pi}$ (resp. $\pi$).  And $D_{\z}^o=D_{\z}^{1}\cap M^{sm},$  and also $\mc^o$ is the preimage of $D_{\z}^o$ along $f.$  Hence we have the following Cartesian diagram.
\begin{equation}\label{gtcar}\xymatrix{\mc^o\ar[r]\ar[d]^{f^o}&\mc^{1}\ar[r]\ar[d]^{f^{1}}&\mc\ar[d]^f\\ D_{\z}^o\ar[r]&D_{\z}^{1}\ar[r]&D_{\z}}.\end{equation}

 Notice that $D_{\z}-D_{\z}^{1}$ is of codimension $\geq2$ in $D_{\z}$.  This is because of Lemma \ref{gtcod} and Remark \ref{idssu}.  We also have
 \begin{lemma}\label{dzin}
 $D_{\z}$ is integral and $D_{\z}-D_{\z}^o$ is of codimension $\geq2$ in $D_{\z}.$  Hence $D_{\z}^o$ is dense in $D_{\z}$.
 \end{lemma}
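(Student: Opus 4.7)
The statement has two parts: the codimension bound $\mathrm{codim}_{D_{\z}}(D_{\z}-D_{\z}^o)\geq 2$, and the integrality of $D_{\z}$. My plan is to establish the codimension bound first and deduce integrality from Serre's criterion applied to $D_{\z}$ together with the near-isomorphism $f^o:\mc^o\to D_{\z}^o$ of diagram (\ref{gtcar}).

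First I would decompose $D_{\z}-D_{\z}^o = (D_{\z}-D_{\z}^1) \cup (D_{\z}^1\cap(M-M^{sm}))$. The first summand is already known (via Lemma \ref{gtcod} and Remark \ref{idssu}) to have codimension $\geq 2$, as is pointed out just before the statement. For the second summand, note that over integral curves every pure rank-$1$ sheaf is automatically stable, so $D_{\z}^1\subseteq M^s$ and the summand equals $D_{\z}^1\cap(M^s-M^{sm})$. By Lemma \ref{dszd} together with Remark \ref{idss} and Remark \ref{idssu}, a stable sheaf in this locus is of the form $\mathcal{H}om(I_p,\mo_C)$ with $C$ integral and $p$ a singular point of $C$; hence the locus is parametrized, bijectively on closed points, by the relative singular locus of the universal curve $\mc^1\to\ls^1$. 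Since $(\ha_3)$ guarantees a smooth curve in $\ls$, singular curves form a proper closed subset of $\ls$ of dimension $\leq l-1$, and every integral curve has finitely many singular points. Thus this locus has dimension $\leq l-1$, and since $D_{\z}$ is equidimensional of dimension $l+1$ (being locally principal inside the irreducible Cohen-Macaulay scheme $M$ of dimension $l+g_L=l+2$), its codimension in $D_{\z}$ is $\geq 2$.

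For integrality, the morphism $f^o:\mc^o\to D_{\z}^o$ is bijective on closed points, since by Remark \ref{idssu} each pair $(C,p)\in\mc^o$ yields the sheaf $\mathcal{H}om(I_p,\mo_C)$, while conversely any $\mf\in D_{\z}^o$ canonically recovers such a pair via the unique (up to scalar) nonzero section $\mo_{C_{\mf}}\hookrightarrow\mf$ supplied by Lemma \ref{dszd}; this bijection upgrades to a scheme-theoretic isomorphism because the universal family on $\mc^o$ is realized by the restriction of the universal sheaf over $D_{\z}^o$. Since $\mc$ is smooth and connected, $\mc^o$ is smooth and irreducible, and so therefore is $D_{\z}^o$. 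The codimension bound then says $D_{\z}^o$ is dense in $D_{\z}$, so $D_{\z}$ is irreducible; combined with $S_1$ (from Lemma \ref{gtcm}) and $R_0$ (from smoothness on the dense open $D_{\z}^o$), Serre's criterion gives reducedness, and hence $D_{\z}$ is integral.

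The delicate step I expect to spend the most care on is verifying that $f^o$ really is an isomorphism of schemes, not merely a set-theoretic bijection, since we need $D_{\z}^o$ to be smooth (or at least generically reduced) for Serre's $R_0$. An alternative route is to use that $\pi$ is smooth on $M^{sm}$ by Le Potier, observe that over the smooth integral curves in $\ls$ the fibrewise intersection of $D_{\z}$ with the Jacobian is the classical theta divisor on a smooth genus-$2$ Jacobian (hence isomorphic to the curve and smooth), and then upgrade fibrewise smoothness to smoothness of $D_{\z}^o$ via the smoothness of $\pi|_{M^{sm}}$; either approach requires a short transversality check, but both land at the same conclusion.
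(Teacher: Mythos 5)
Your codimension count is correct and is essentially the paper's argument: both reduce to $D_{\z}^{1}-D_{\z}^{o}$ after discarding $D_{\z}-D_{\z}^{1}$, identify the remaining sheaves as $\mathcal{H}om(I_p,\mo_C)$ with $C$ integral and $p$ singular via Lemma \ref{dszd} and Remark \ref{idssu}, and conclude by a dimension count against the equidimensionality of the divisor $D_{\z}$ in the irreducible Cohen--Macaulay $M$.

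The integrality half is where you diverge, and there is a genuine gap. Your argument rests on $f^{o}:\mc^{o}\to D_{\z}^{o}$ being an isomorphism of schemes, but the sentence ``this bijection upgrades to a scheme-theoretic isomorphism because the universal family on $\mc^{o}$ is realized by the restriction of the universal sheaf'' is not a proof: a bijective morphism induced by a universal family can perfectly well map onto a non-reduced or otherwise singular target, and reducedness of $D_{\z}^{o}$ is precisely what is at stake. Worse, in the paper's logical order the isomorphism $f^{o}$ is only established inside the proof of Lemma \ref{gtfir}, which comes after Lemma \ref{dzin} and invokes it (the flatness of $\pi^{o}$ and $\overline{\pi}^{o}$ there is deduced from $D_{\z}^{o}$ being Cohen--Macaulay \emph{and integral} with equidimensional fibers), so citing that isomorphism here is circular unless you supply an independent construction of the inverse morphism (e.g.\ via $p_{*}\mathcal{E}$ being a line bundle on an atlas of $D_{\z}^{o}$, using Clifford to get $h^{0}=1$), which you have not done. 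The paper instead proves integrality directly and more cheaply: if $D_{\z}=D_1+D_2$ as divisors in the irreducible $M$, each $D_i$ has dimension $l+1$ and, since $D_{\z}$ meets every fiber of $\pi$ in dimension $1$, each $D_i$ dominates $\ls$; hence the restriction of $D_{\z}$ to a general fiber would be non-integral, contradicting the fact that over a smooth curve this restriction is the classical theta divisor of a genus-$2$ Jacobian, which is integral. This is essentially the ``alternative route'' you sketch in your last paragraph, and it is the one you should carry out, since it needs no transversality or flatness input beyond Corollary \ref{irred} and the fibre-dimension statement already in hand.
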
 
 \begin{proof}The moduli space $M$ is irreducible by Corollary \ref{irred},  and $D_{\z}$ is a divisor in $M.$  So if $D_{\z}$ is not integral,  then we can write $D_{\z}=D_1+D_2$ with $D_i$'s divisors in $M$ and $dim~D_i=dim~M-1=l+1.$  On the other hand,  $D_{\z}$ restricted to every fiber is of dimension $1.$  As a result for $i=1,2,$  the image of $D_i$ along $\pi$ is a closed subscheme of $\ls$ of dimension $l$ and hence is $\ls$.  Then we know that $D_{\z}$ restricted to a generic fiber of $\pi$ is not integral.  But this contradicts the fact that $D_{\z}$ restricted to a fiber over smooth curve is integral.  So we know that $D_{\z}$ is irreducible and hence any open set in $D_{\z}$ is dense in $D_{\z}$.  
 
Since $D_{\z}-D_{\z}^{1}$ is of codimension $\geq2$ in $D_{\z},$  to prove that $D_{\z}-D_{\z}^o$ is of codimension $\geq2,$  it is enough to prove that $D_{\z}^{1}-D_{\z}^o$ is of codimension $\geq2$ in $D_{\z}^{1}.$
Let $\mf$ be a sheaf in $D_{\z}^{1}-D_{\z}^o.$  Let $C$ be its supporting curve.  
 
Since $C$ is integral,  then $\mf\simeq\mathcal{H}om(I_p,\mo_C)$ with $p$ a singular point in $C.$  Hence $D_{\z}^{1}-D_{\z}^o$ restricted to the fiber over $C$ is empty if $C$ is smooth and contains finitely many points if $C$ is not smooth.  
  
 This finishes the proof of the lemma.
 \end{proof} 
 
\begin{proof}[Proof of Lemma \ref{gtfir}] $f$ is a projective morphism,  and by Lemma \ref{dzin} we know that $D_{\z}$ is integral.  Hence it is enough to show the following two statements:

$(1)$ $f$ is a birational map;

$(2)$  $D_{\z}$ is normal.

Because of Lemma \ref{gtcm} we know that $D_{\z}$ is Cohen-Macaulay.  Hence it is normal if and only if it is regular in codimension one.  Moreover since $D_{\z}-D_{\z}^o$ is of codimension $\geq2$,  it is enough to show that $D_{\z}^o$ is normal.
Hence both Statement $(1)$ and Statement $(2)$ will follow if we show that $f^o$ in (\ref{gtcar}) is an isomorphism. 

Now we focus on $f^o.$  Since (\ref{gtcar}) is a Cartesian diagram,  $f^o$ is projective.  And it is easy to see that $f^o$ is bijective,  and hence it is affine.  Then we also have $f^{\#}:\mo_{D_{\z}^o}\ra f^o_{*}\mo_{\mc^o}$ is injective because of the surjectivity of $f^o.$  Moreover $f^o$ will be an isomorphism if $f^{\#}$ is surjective.

To prove the surjectivity of $f^{\#},$  by Nakayama's lemma,  it is equivalent to show that $f^{\#}$ restricted to every fiber of $\pi$ is surjective,  i.e. for any $y\in\ls$ we have $f^{\#}_y:\mo_{D_{\z}^o}\otimes k(y)\ra f^o_{*}\mo_{\mc^o}\otimes k(y)$ is surjective.

We restrict the commutative diagram (\ref{gtcomm}) to $D_{\z}^o$ and get
\begin{equation}\label{gtcommrs}\xymatrix{\mc^{o}\ar[r]^{f^{o}}\ar[dr]^{\overline{\pi}^o}&D_{\z}^{o}\ar[d]^{\pi^o}\\ &\ls^{1}}.\end{equation}

Notice that both $\overline{\pi}^o$ and $\pi^o$ are flat.  This is because $\ls^{1}$ is regular,  both $\mc^o$ and $D_{\z}^o$ are Cohen-Macaulay and integral and also every fiber of $\overline{\pi}^o$ and $\pi^o$ is of dimension $1$.  

Since $f^o$ is bijective,  we have $R^if^o_{*}\mo_{\mc^o}=0$ for all $i>0.$  And hence $f^o_{*}$ commutes with the restriction to the fiber,  i.e. $f_{*}^o\mo_{\mc^o}\otimes k(y)\simeq f_{*}^o(\mo_{\mc^o}\otimes k(y))$ for any $y\in\ls.$  Hence to prove the surjectivity of $f^{\#}_y$ it is enough to show that $f^o$ restricted to the fiber over $y$ is an isomorphism.

Let $C$ be the curve corresponding to the point $y$ in $\ls.$  Denote $C_y:=\mc^o\times Spec~k(y)$ and $D_y:=D_{\z}^o\times Spec~k(y).$  One then can see that $D_y$ is the moduli space parametrizing line bundles on $C$ which are of degree $1$ and have nonzero global sections,  and hence there is a morphism $h:D_y\ra Pic^{1}~C$.  Now we view the smooth locus of $C$ as a closed subscheme of $Pic^1~C$ by assigning every smooth point $p$ to $[\mo_{C}(p)].$  It is easy to see that the image of $D_y$ via $h$ is $C_y$ and $h$ provides an inverse of $f^o_y.$  Hence $f^o$ restricted to every fiber is an isomorphism and hence the lemma.
\end{proof}

\begin{proof}[Proof of Lemma \ref{gtsec}]We define $\mc^{s}$ to be the open subscheme in $\mc$ by excluding all singular points on every fiber of $\overline{\pi}$ in diagram (\ref{gtbig}).  One sees that $\mc-\mc^s$ is of codimension $\geq2$ in $\mc.$  Denote $j:\mc^s\ra\mc$ be the open embedding.  Because $\mc$ is smooth,  we have that 
\begin{equation}\label{rsop}f^{*}\z^r\simeq j_{*}j^{*}f^{*}\z^r.\end{equation}
We first compute $j^{*}f^{*}\z^r$ and then push it forward along $j$ to get $f^{*}\z^r.$  
We have a Cartesian diagram  
\begin{equation}\label{gtopen}\xymatrix{\mc^{s}\ar[r]^{\triangle^s}\ar[d]^j&\mc\times_{\ls}\mc^{s}\ar[r]^{i\times id_{\mc^s}}\ar[d]^{id_{\mc}\times j}&X\times\mc^{s}\ar[r]^{p^s_2}\ar[d]^{id_{X\times\ls}\times j}&\mc^{s}\ar[d]^j\\ \mc\ar[r]^{\triangle}&\mc\times_{\ls}\mc\ar[r]^{i\times id_{\mc}}\ar[d]^{\overline{p_1}}&X\times\mc\ar[r]^{p_2}\ar[d]^{p_1}&\mc\ar[d]^{\overline{\pi}}\\&\mc\ar[r]^i&X\times\ls\ar[r]^p&\ls.}\end{equation}

By the universal property of $\z,$  we have $(j^{*}f^{*}\z)^{\vee}\simeq det~R^{\bullet}p^s_{2}\circ(id_{X\times\ls}\times j)^{*}(i\times id_{\mc})_{*}\mathcal{H}om(I_{\triangle},\mo_{\mc\times_{\ls}\mc})\simeq  det~R^{\bullet}p^s_{2}\circ(i\times id_{\mc^s})_{*}(id_{\mc}\times j)^{*}\mathcal{H}om(I_{\triangle},\mo_{\mc\times_{\ls}\mc}).$  And let $I^s_{\triangle}$ denote $I_{\triangle}$ restricted to $\mc\times_{\ls}\mc^s,$  then $(id_{\mc}\times j)^{*}\mathcal{H}om(I_{\triangle},\mo_{\mc\times_{\ls}\mc})=\mathcal{H}om(I_{\triangle}^s,\mo_{\mc\times_{\ls}\mc^s}).$

Notice that $\overline{\pi}\circ j$ is smooth and hence $\overline{p_1}\circ (id_{\mc}\times j)$ is smooth.  Then because $\mc$ is smooth,  $\mc\times_{\ls}\mc^s$ is smooth.  Then $I^s_{\triangle}$ is locally free on $\mc\times_{\ls}\mc^s$,  and so is $\mathcal{H}om(I^s_{\triangle},\mo_{\mc\times_{\ls}\mc^s})$.  We denote $I_{\triangle}^{s\vee}$ to be $\mathcal{H}om(I^s_{\triangle},\mo_{\mc\times_{\ls}\mc^s}).$  Since $\triangle^s(\mc^s)=\triangle(\mc)\cap(\mc\times_{\ls}\mc^s)$,  we have an exact sequence on $\mc\times_{\ls}\mc^s$ 
\begin{equation}\label{firdual}0\ra\mo_{\mc\times_{\ls}\mc^s}\ra I_{\triangle} ^{s\vee}\ra\mo_{\triangle^s(\mc^s)}\otimes I_{\triangle} ^{s\vee}\ra0.
\end{equation}

We know that 
\begin{equation}\label{gtorin}(j^{*}f^{*}\z)^{\vee}\simeq det~R^{\bullet}p^s_{2}\circ (i\times id_{\mc^s})_{*}I_{\triangle} ^{s\vee}.\end{equation}  

On the other hand,  $i$ is a closed embedding and so is $i\times id_{\mc}.$  Hence 
\begin{equation}\label{gtch}R^{\bullet}p^s_{2}\circ (i\times id_{\mc^s})_{*}I_{\triangle} ^{s\vee}\simeq R^{\bullet}(p^s_{2}\circ (i\times id_{\mc^s}))I_{\triangle} ^{s\vee}.\end{equation}

And because of sequence (\ref{firdual}),  we have 
\begin{eqnarray}\label{gttim}&& det~R^{\bullet}(p^s_{2}\circ (i\times id_{\mc^s}))I_{\triangle} ^{s\vee}\\ &\simeq& (det~R^{\bullet}(p^s_{2}\circ (i\times id_{\mc^s}))\mo_{\mc\times_{\ls}\mc^s})\otimes (det~R^{\bullet}(p^s_{2}\circ (i\times id_{\mc^s}))\mo_{\triangle^s(\mc^s)}\otimes I_{\triangle} ^{s\vee}).\nonumber\end{eqnarray}

First we compute $det~R^{\bullet}(p^s_{2}\circ (i\times id_{\mc^s}))\mo_{\mc\times_{\ls}\mc^s}.$

Since the diagram (\ref{gtopen}) is Cartesian and $\overline{\pi}\circ j$ is flat,  we have that 
\[[R^{\bullet}(p^s_{2}\circ (i\times id_{\mc^s}))\mo_{\mc\times_{\ls}\mc^s}]=[(\overline{\pi}\circ j)^{*} R^{\bullet}(p\circ i) \mo_{\mc}];\]
and
\begin{equation}\label{gtexch}det~R^{\bullet}(p^s_{2}\circ (i\times id_{\mc^s}))\mo_{\mc\times_{\ls}\mc^s}\simeq(\overline{\pi}\circ j)^{*} det~R^{\bullet}(p\circ i) \mo_{\mc}\end{equation}
Using the exact sequence on $X\times\ls$
\begin{equation}\label{dfuc}0\rightarrow q^{*}\mathcal{O}_{X}(-L)\otimes p^{*}\mathcal{O}_{\ls}(-1)\rightarrow\mathcal{O}_{X\times\ls}\rightarrow\mathcal{O}_{\mathcal{C}}\rightarrow0,\end{equation}
where $q:X\times\ls\ra X$ is the projection to the first factor,  we get that 
\begin{eqnarray}\label{gtpfsf}[det~R^{\bullet}(p\circ i) \mo_{\mc}]&=&(det~R^{\bullet}p \mo_{X\times\ls})\otimes (det~R^{\bullet}p(q^{*}\mathcal{O}_{X}(-L)\otimes p^{*}\mathcal{O}_{\ls}(-1)))^{\vee}\nonumber\\ &=& \mo_{\ls}(1)^{\otimes \chi(\mo_X(-L))}=\mo_{\ls}(2).\end{eqnarray}  
The last equality is because $\chi(\mo_X(-L))=\chi(\mo_X)+g_L-1=2.$

Because of (\ref{gtexch}) and (\ref{gtpfsf}),  we have 
\begin{equation}\label{gtor}det~R^{\bullet}(p^s_{2}\circ (i\times id_{\mc^s}))\mo_{\mc\times_{\ls}\mc^s}\simeq j^{*}\overline{\pi}^{*}\mo_{\ls}(2).\end{equation}

Now we compute $det~R^{\bullet}(p^s_{2}\circ (i\times id_{\mc^s}))\mo_{\triangle^s(\mc^s)}\otimes I_{\triangle} ^{s\vee}.$

Notice that $p_2^s\circ (i\times id_{\mc^s})$ restricted on $\triangle^s(\mc^s)$ is an isomorphism and $p_2^s\circ (i\times id_{\mc^s})\circ\triangle^s=id_{\mc^s}.$  Hence
\begin{equation}\label{rtmc}R^{\bullet}(p^s_{2}\circ (i\times id_{\mc^s}))\mo_{\triangle^s(\mc^s)}\otimes I_{\triangle} ^{s\vee}\simeq (\triangle^{s})^{*} I^{s\vee}_{\triangle}.
\end{equation}

And morover $(\triangle^{s})^{*} I^{s\vee}_{\triangle}$ is the relative tangent bundle $\mathcal{T}_{\mc^s/\ls}$ of the smooth morphism $\overline{\pi}\circ j:\mc^s\ra\ls.$  On $X\times\ls$ we have 
\begin{equation}\label{notex}0\ra\mathcal{T}_{\mc/\ls}\ra i^{*}\mathcal{T}_{X\times\ls/\ls}\ra\mathcal{N}_{\mc},\end{equation}
where $i$ is the closed embedding of $\mc$ into $X\times\ls$ as in diagram (\ref{gtopen}) and $\mathcal{N}_{\mc}$ is the normal bundle on $\mc.$  Hence $\mathcal{N}_{\mc}\simeq i^{*}(q^{*}\mo_X(L)\otimes p^{*}\mo_{\ls}(1)).$  When we restrict the sequence (\ref{notex}) to $\mc^s,$  it becomes a short exact sequence and hence we get
\begin{equation}\label{gtex}0\ra\mathcal{T}_{\mc^s/\ls}\ra j^{*}i^{*}\mathcal{T}_{X\times\ls/\ls}\ra j^{*}i^{*}(q^{*}\mo_X(L)\otimes p^{*}\mo_{\ls}(1))\ra0,\end{equation}

Because of (\ref{rtmc}) and (\ref{gtex}) we know that \begin{eqnarray}\label{gtscr}&&det~R^{\bullet}(p^s_{2}\circ (i\times id_{\mc^s}))\mo_{\triangle^s(\mc^s)}\otimes I_{\triangle} ^{s\vee}\simeq det~\mathcal{T}_{\mc^s/\ls}\nonumber\\ &\simeq& (det~j^{*}i^{*}\mathcal{T}_{X\times\ls/\ls})\otimes (det~j^{*}i^{*}(q^{*}\mo_X(L)\otimes p^{*}\mo_{\ls}(1)))^{\vee}.\end{eqnarray}

Since $\mathcal{T}_{X\times\ls/\ls}\simeq q^{*}\mathcal{T}_X,$  we have 
\begin{equation}\label{gttr}det~R^{\bullet}(p^s_{2}\circ (i\times id_{\mc^s}))\mo_{\triangle^s(\mc^s)}\otimes I_{\triangle} ^{s\vee}\simeq det~\mathcal{T}_{\mc^s/\ls}\simeq j^{*}i^{*}(q^{*}\mo_{X}(-L-K)\otimes p^{*}\mo_{\ls}(-1)).
\end{equation}

Combining (\ref{gtorin}) (\ref{gtch}) (\ref{gttim}) (\ref{gtor}) and (\ref{gttr}),  we finally have 
\[j^{*}f^{*}\z\simeq j^{*}i^{*}(q^{*}\mo_{X}(L+K)\otimes p^{*}\mo_{\ls}(-1)).\]
And moreover because of (\ref{rsop}),  we have $f^{*}\z\simeq i^{*}(q^{*}\mo_{X}(L+K)\otimes p^{*}\mo_{\ls}(-1)).$

Now in order to compute $\overline{\pi}_{*}f^{*}\z^r,$  we tensor the sequence (\ref{dfuc}) by $q^{*}\mo_{X}(r(L+K))\otimes p^{*}\mo_{\ls}(-r)$ and get
\begin{equation}\label{dfuctr}0\rightarrow q^{*}\mathcal{O}_{X}(r(L+K)-L)\otimes p^{*}\mathcal{O}_{\ls}(-r-1)\rightarrow q^{*}\mo_{X}(r(L+K))\otimes p^{*}\mo_{\ls}(-r)\rightarrow f^{*}\z^r\rightarrow0,
\end{equation}

We have $f^{*}\z^r$ on the right in the sequence (\ref{dfuctr}) because 
\[\mathcal{O}_{\mathcal{C}}\otimes (q^{*}\mo_{X}(r(L+K))\otimes p^{*}\mo_{\ls}(-r))\simeq i^{*}(q^{*}\mo_{X}(r(L+K))\otimes p^{*}\mo_{\ls}(-r))\simeq f^{*}\z^r.\]

 As $X=\mathbb{P}(\mo_{\pone}\oplus\mo_{\pone}(-e))$ and $L=2G+(e+3)F,$  $K=-2G-2F$ with $F$ the fiber class and $G.G=-e,$  we have that for all $r\geq2,$   

$(1)$ $H^0(r(L+K)-L)=0,$  $H^2(r(L+K)-L)=H^0(-(r-1)(L+K))^{\vee}=0,$  and hence $h^1(r(L+K)-L)=-\chi(r(L+K)-L)=r-2;$

$(2)$ $H^i(r(L+K))=0,$  for $i>0,$  and $h^0(r(L+K))=\chi(r(L+K))=r+1.$

Then using sequence (\ref{dfuctr}) one can easily compute $\overline{\pi}_{*}f^{*}\z^r$ and get the expected result.   And this finishes the proof of the lemma.
\end{proof}

\section*{Appendix.}
\subsection*{A.}
The conclusion of following lemma is somehow well-known,  but we still give a proof here because we didn't find any good reference for any proof.

\begin{lemma}\label{llfr}Let $X$ be a smooth projective surface,  and let $\omega$ be its dualizing sheaf.  Let $F$ be a pure sheaf of dimension one on $X.$  Then $F$ has a locally free resolution of length one and $F\simeq F^{DD}:=\mathcal{E}xt^1(\mathcal{E}xt^1(F,\omega),\omega).$
\end{lemma}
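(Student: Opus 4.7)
The plan is to establish the two assertions in order: first, existence of a locally free resolution of length one, and second, biduality.

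For the first part, I would work stalkwise and invoke the Auslander--Buchsbaum formula. Pick a closed point $x\in X$; since $X$ is a smooth surface, $\mathcal{O}_{X,x}$ is regular of depth $2$. Because $F$ is pure of dimension one, every associated prime of $F$ corresponds to a one-dimensional subvariety, so $\mathfrak{m}_x$ is not an associated prime of $F_x$ whenever $F_x\neq 0$, i.e.\ $\mathrm{depth}(F_x)\geq 1$. Auslander--Buchsbaum then gives $\mathrm{pd}(F_x)\leq 1$, which globalizes to $\mathcal{E}xt^i(F,\mathcal{O}_X)=0$ for $i\geq 2$. Resolving $F$ by locally frees and truncating, one obtains a two-term locally free resolution
\begin{equation*}
0\ra \mathcal{A}\ra\mathcal{B}\ra F\ra 0.
\end{equation*}

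For the second part, I would run the resolution through $\mathcal{H}om(-,\omega)$ twice and keep track of the induced complex. Applying $\mathcal{H}om(-,\omega)$ to the above sequence yields the long exact sequence
\begin{equation*}
0\ra\mathcal{H}om(F,\omega)\ra\mathcal{B}^\vee\otimes\omega\ra\mathcal{A}^\vee\otimes\omega\ra\mathcal{E}xt^1(F,\omega)\ra 0,
\end{equation*}
where $\mathcal{E}xt^i(F,\omega)=0$ for $i\geq 2$ by the first part, and $\mathcal{H}om(F,\omega)=0$ because $F$ is a torsion sheaf while $\omega$ is locally free. Thus $\mathcal{E}xt^1(F,\omega)$ inherits a locally free resolution of length one.

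Dualizing once more via $\mathcal{H}om(-,\omega)$ and using the canonical identification $\mathcal{H}om(\mathcal{A}^\vee\otimes\omega,\omega)\simeq\mathcal{A}$ (resp.\ for $\mathcal{B}$), together with the vanishing $\mathcal{E}xt^i(\mathcal{A}^\vee\otimes\omega,\omega)=0$ for $i>0$, produces
\begin{equation*}
0\ra\mathcal{H}om(\mathcal{E}xt^1(F,\omega),\omega)\ra\mathcal{A}\ra\mathcal{B}\ra\mathcal{E}xt^1(\mathcal{E}xt^1(F,\omega),\omega)\ra 0,
\end{equation*}
and naturality ensures that the middle arrow is (up to isomorphism) the same map $\mathcal{A}\ra\mathcal{B}$ we started with. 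Comparing with the original resolution, the leftmost term vanishes and the cokernel is $F$, giving the desired isomorphism $F\simeq F^{DD}$. The main conceptual point to watch is the vanishing $\mathcal{H}om(F,\omega)=0$ and the compatibility of the dualized differential with the original one; once these are in hand the argument is essentially formal. I do not expect any serious obstacle beyond the depth computation in the first step, which is the place where purity is genuinely used.
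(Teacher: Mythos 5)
Your proof is correct, and the biduality half is essentially the paper's argument: dualize the length-one resolution twice by $\mathcal{H}om(-,\omega)$, kill $\mathcal{H}om(F,\omega)$ and $\mathcal{H}om(\mathcal{E}xt^1(F,\omega),\omega)$ because those are maps from torsion sheaves into a locally free one, and read off $F\simeq F^{DD}$ from the cokernel. Where you genuinely diverge is in producing the length-one resolution. The paper does not use depth or Auslander--Buchsbaum: it takes a surjection $P\ra F$ with kernel $E$ and proves $E$ is reflexive (hence locally free on a smooth surface) by running the same double-dualization, observing that $E^{DD}/E$ is a zero-dimensional subsheaf of the pure one-dimensional sheaf $F$ and therefore vanishes. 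So in the paper purity enters through the embedding $E^{DD}/E\hookrightarrow F$, whereas in your argument it enters through $\mathrm{depth}(F_x)\geq 1$; the two uses are equivalent in content. Your route is the more standard commutative-algebra one and cleanly separates the two assertions (local freeness of the kernel first, then a purely formal dualization of locally free complexes), at the cost of invoking Auslander--Buchsbaum; the paper's route is self-contained within sheaf duality but has to interleave the reflexivity of $E$ with the computation of $F^{DD}$. One small point you rightly flag and should not gloss over in a final write-up is the compatibility of the twice-dualized differential $\mathcal{A}\ra\mathcal{B}$ with the original one under the canonical isomorphisms $\mathcal{A}\simeq\mathcal{H}om(\mathcal{A}^\vee\otimes\omega,\omega)$; this is exactly the commutative diagram with the maps $\theta_E,\theta_P,\theta_F$ that the paper writes out, and it is formal but worth stating.
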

\begin{proof}On $X$ we have an exact sequence
\begin{equation}\label{lfr}
0\rightarrow E\rightarrow P \rightarrow F\rightarrow 0,
\end{equation}
where $P$ is a locally free sheaf on $X.$  To prove the first statement of the lemma,  it is enough to show $E\simeq E^{DD}:=\mathcal{H}\emph{om}(\mathcal{H}\emph{om}(E,\omega),\omega)$,  i.e. $E$ is reflexive.

Let $\mathcal{H}\emph{om}(-,\omega)$ act on (\ref{lfr}) and we get 
\begin{equation}\label{extact}
0\rightarrow \mathcal{H}\emph{om}(P,\omega)\rightarrow\mathcal{H}\emph{om}(E,\omega)\rightarrow\mathcal{E}\emph{xt}^1(F,\omega)\rightarrow0
\end{equation}
The $0$ on the left hand side is because of the vanishing of $\mathcal{H}\emph{om}(F,\omega),$  which can be deduced from the fact that $F$ is a torsion sheaf and $\omega$ is torsion free.  The $0$ on the right hand side is because of the vanishing of $\mathcal{E}\emph{xt}^1(P,\omega),$  which can be deduced from the fact that $P$ is locally free.  Moreover $\mathcal{H}\emph{om}(P,\omega)$ is locally free and $\mathcal{E}\emph{xt}^1(F,\omega)$ is a torsion sheaf of dimension one.

Let $\mathcal{H}\emph{om}(-,\omega)$ act on (\ref{extact}) and then we get
\begin{equation}\label{exttact}
0\rightarrow \mathcal{H}\emph{om}(\mathcal{E}\emph{xt}^1(F,\omega),\omega)\rightarrow E^{DD}\rightarrow P^{DD}\rightarrow F^{DD}\rightarrow \mathcal{E}\emph{xt}^1(\mathcal{H}\emph{om}(E,\omega),\omega)\rightarrow0.
\end{equation}  

$\mathcal{H}\emph{om}(\mathcal{E}\emph{xt}^1(F,\omega),\omega)=0,$  because $\mathcal{E}\emph{xt}^1(F,\omega)$ is a torsion sheaf and $\omega$ is torsion free.  So there is a injective morphism $E^{DD}\rightarrow P^{DD},$  sending $E^{DD}$ as a subsheaf of $P^{DD}\simeq P.$    $E^{DD}/E$ is a subsheaf of $F$ by sequence (\ref{lfr}).  But $F$ is pure and $E^{DD}/E$ is of dimension zero.  Thus $E^{DD}/E$ is zero and $E\simeq E^{DD}$.   

As $E$ is reflexive hence locally free,  $\mathcal{E}\emph{xt}^1(\mathcal{H}\emph{om}(E,\omega),\omega)=0$ and $\mathcal{H}\emph{om}(E,\omega)$ is locally free.  Then sequence (\ref{exttact}) can be rewritten as 
\begin{equation}
0\rightarrow E^{DD}\rightarrow P^{DD}\rightarrow F^{DD}\rightarrow 0
\end{equation}
Since every pure sheaf can be embedded into its reflexive hull,  we have the commutative diagram  
\begin{equation}
\xymatrix{
0\ar[r]&E\ar[r]\ar[d]^{\theta_E} &P\ar[r] \ar[d]^{\theta_P}&F\ar[r]\ar[d]^{\theta_F} &0\\
0\ar[r]&E^{DD}\ar[r]&P^{DD}\ar[r]&F^{DD}\ar[r]&0}
\end{equation} 
$\theta_E$ and $\theta_P$ are both isomorphisms,  so is $\theta_F$,  and hence $F\simeq F^{DD}.$
\end{proof} 

\subsection*{B.}
In this subsection we want to prove the following lemma.
\begin{lemma}\label{finitegz}Let $X$ be any Hirzebruch surface,  with $F$ the fiber class and $G$ the section class.  Let $\mk$ be a semistable sheaf of class $(0,nF,0)$ on $X$.  If we fixed the schematic support of $\mk$ in $|nF|$,  then there are finitely many isomorphic classes of such $\mk$.\end{lemma}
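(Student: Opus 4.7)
The plan is to first reduce to the case where the support is concentrated on a single fiber with multiplicity, then identify such sheaves with pairs consisting of a vector bundle on $\mathbb{P}^{1}$ together with a nilpotent endomorphism, and finally extract finiteness from the Jordan classification of nilpotent matrices.

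Fix the schematic support $C = \sum_{i} m_{i} F_{i}$ with $\sum m_{i} = n$. Because any two distinct fibers on a Hirzebruch surface are disjoint ($F \cdot F = 0$), the thickenings $m_{i} F_{i}$ are pairwise schematically disjoint ($I_{F_{i}}+I_{F_{j}} = \mo_{X}$ for $i\neq j$), so any sheaf $\mk$ with support $C$ splits canonically as $\mk = \bigoplus_{i} \mk_{i}$ with $\mk_{i}$ supported on $m_{i} F_{i}$, and the problem reduces to a single fiber $F \cong \mathbb{P}^{1}$ with multiplicity $m$. Since $X$ is a $\mathbb{P}^{1}$-bundle over $\mathbb{P}^{1}$ and $N_{F/X} \cong \mo_{F}$ is trivial, the $m$-th infinitesimal neighborhood of $F$ is a trivial thickening, $\mo_{mF} \cong \mo_{F} \otimes_{k} k[\epsilon]/\epsilon^{m}$. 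A coherent $\mo_{mF}$-module is therefore the same datum as a pair $(\mathcal{M}, \phi)$, where $\mathcal{M}$ is a coherent sheaf on $F$ and $\phi \in \operatorname{End}_{\mo_{F}}(\mathcal{M})$ satisfies $\phi^{m} = 0$. Purity of $\mk$ on $X$ amounts to $\mathcal{M}$ being torsion-free on $\mathbb{P}^{1}$ (the $\mo_{F}$-torsion of $\mathcal{M}$ is automatically $\phi$-invariant), hence a vector bundle; the numerics $c_{1}(\mk) = mF$ and $\chi(\mk) = 0$ translate to rank $m$ and degree $-m$, so $\mathcal{M} \cong \bigoplus_{i=1}^{m} \mo_{\mathbb{P}^{1}}(a_{i})$ with $\sum a_{i} = -m$.

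The crucial step, and the main obstacle, is to show that semistability forces $a_{i} = -1$ for every $i$. Order the summands so that $a_{1}$ is maximal, with multiplicity $r$, and consider the sub-bundle $V = \mo(a_{1})^{\oplus r} \subset \mathcal{M}$ of summands of top degree. For every $a_{j} < a_{1}$ one has $\operatorname{Hom}(\mo(a_{1}), \mo(a_{j})) = H^{0}(\mo(a_{j} - a_{1})) = 0$, so no component of $\phi|_{V}$ can land in a summand of strictly smaller degree; hence $V$ is automatically $\phi$-invariant. Thus $V$ corresponds to an $\mo_{X}$-subsheaf $\mk_{V} \subset \mk$ with $c_{1}(\mk_{V}) = rF$ and $\chi(\mk_{V}) = \chi(V) = r(a_{1}+1)$, and semistability of $\mk$ forces $a_{1}+1 \leq 0$, i.e.\ $a_{1} \leq -1$. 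Combined with $a_{i} \leq a_{1}$ and $\sum a_{i} = -m$ this pins down $a_{i} = -1$ for all $i$, giving $\mathcal{M} \cong \mo_{\mathbb{P}^{1}}(-1)^{\oplus m}$.

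Once this structure theorem is in place, finiteness is immediate. One has $\operatorname{End}(\mo(-1)^{\oplus m}) \cong \operatorname{Mat}_{m}(k)$, so an isomorphism class of $(\mathcal{M}, \phi)$ is precisely a $GL_{m}(k)$-conjugacy class of nilpotent $m \times m$ matrices, equivalently a partition of $m$, of which there are $p(m)$. Semistability in the reverse direction is automatic, since every rank-$r$ subsheaf of the slope-semistable bundle $\mo(-1)^{\oplus m}$ has degree $\leq -r$ and hence $\chi \leq 0$. Therefore the total count of isomorphism classes of semistable $\mk$ with fixed support $C = \sum m_{i} F_{i}$ is $\prod_{i} p(m_{i})$, a finite number.
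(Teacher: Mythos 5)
Your proof is correct, and it takes a genuinely different route from the paper's. The paper argues by induction on $n$: it extracts a quotient $\mo_C(-1)$ using the S-equivalence statement of Proposition \ref{isomls}, applies the inductive hypothesis to the kernel $\mk'\simeq\bigoplus_i\mo_{n_i'C}(-G)$, and then analyzes the space $\operatorname{Ext}^1(\mo_C(-1),\bigoplus_i\mo_{n_i'C}(-G))\simeq\mathbb{C}^N$ explicitly, checking that every extension class again produces a direct sum of sheaves of the form $\mo_{n_iC}(-G)$. You instead exploit the fact that the thickened fiber is a product, $\mo_{mF}\simeq\mo_F\otimes_k k[\epsilon]/(\epsilon^m)$, which converts the whole problem into linear algebra: a pure semistable sheaf of Euler characteristic zero on $mF$ becomes a nilpotent endomorphism of $\mo_{\pone}(-1)^{\oplus m}$ up to conjugacy, and the Jordan classification gives exactly the partitions of $m$ --- matching the paper's list, with a Jordan block of size $n_i$ corresponding to the summand $\mo_{n_iC}(-G)$. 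Your route is shorter, independent of Proposition \ref{isomls}, and yields the precise count $\prod_i p(m_i)$ rather than mere finiteness; the paper's route, by contrast, stays entirely within the sheaf-theoretic framework used elsewhere in the article. One justification to tighten: triviality of the normal bundle $N_{F/X}\simeq\mo_F$ does not by itself force the $m$-th order thickening to split (higher-order obstructions can occur for a general curve with trivial normal bundle); here the splitting is nevertheless immediate because $mF$ is the scheme-theoretic preimage of $\operatorname{Spec}k[t]/(t^m)\subset\pone$ under the ruling, and the $\pone$-bundle trivializes over this Artinian base since every vector bundle over a local Artinian ring is free. With that remark inserted, the argument is complete.
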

\begin{proof}With no loss of generality,  we assume that the schematic support of $\mk$ is connected hence equals to $nC$ with $C\simeq \pone$ since $F.F=0.$  We then want to show that $\mk\simeq\bigoplus_{i}\mo_{n_iC}(-G)$,  with $n_i$ positive integers such that $\sum_in_i=n$. 

We use induction.  When $n=1$,  $\mk\simeq\mo_{C}(-1)\simeq\mo_{C}(-G)$ since $G.F=1$.  Then assume that we have proved the statement for all $n<n_0.$  Let $n=n_0.$  By Proposition \ref{isomls} $\mk$ is $S$-equivalent to $\mo_{C}(-1)^{\oplus n}$,  so it has $\mo_{C}(-1)$ as a quotient.  Hence we have the exact sequence
\begin{equation}\label{febuth}0\ra\mk'\ra\mk\ra\mo_{C}(-1)\ra0.\end{equation}
By assumption,  we have $\mk'\simeq\bigoplus_{i=1}^{N}\mo_{n'_iC}(-G)$ with $\sum_in'_i=n_0-1.$  We also assume $0<n'_1\leq n'_2\leq\ldots\leq n'_N$.  By a direct computation and Hirzebruch-Riemann-Roch,  we have 
\begin{eqnarray}dim~Ext^1(\mo_{C}(-1),\bigoplus_{i=1}^{N}\mo_{n'_iC}(-G))&=&dim~Hom(\mo_{C}(-1),\bigoplus_{i=1}^{N}\mo_{n'_iC}(-G))\nonumber \\ &=&\sum_{i=1}^N dim~Hom(\mo_{C}(-1),\mo_{n'_iC}(-G)).\nonumber\end{eqnarray}
For each $\mo_{n'_iC}(-G)$ we have the following exact sequence
\begin{equation}\label{dimhom}\xymatrix@C=0.6cm{0\ar[r]&\mo_{(n'_i-1)C}(-G)\ar[r]&\mo_{n'_iC}(-G)\ar[r]^r&\mo_{C}(-1)\ar[r]&0.}
\end{equation}
For every nonzero element $s\in$Hom$(\mo_{C}(-1),\mo_{n'_iC}(-G))$,  $r\circ s$ must be either zero or isomorphic.  If it was isomorphic,  then the sequence (\ref{dimhom}) would split.  Hence $r\circ s=0$ and Hom$(\mo_{C}(-1),\mo_{n'_iC}(-G))\simeq$Hom$(\mo_{C}(-1),\mo_{(n'_i-1)C}(-G))$.  So by induction we know that $dim~$Hom$(\mo_{C}(-1),\mo_{n'_iC}(-G))=1$ for all $n'_i>0.$ 

Ext$^1(\mo_{C}(-1),\bigoplus_{i=1}^{N}\mo_{n'_iC}(-G))\simeq \mathbb{C}^{N}$.  We then assign to every element $\underline{t}=(t_1,\ldots,t_N)\in\mathbb{C}^N-\{0\}$ a sequence as follow.  Let $0<i_0\leq N$ such that $t_{i_0}\neq0$ and $t_i=0$ for all $i>i_0.$
\begin{equation}\label{sqtn}\xymatrix@C=0.6cm{0\ar[r]&\bigoplus_{i=1}^{N}\mo_{n'_iC}(-G)\ar[r]^{f^{\underline{t}}~~~~~~~~~}&\bigoplus_{i\neq i_0}\mo_{n'_iC}(-G)\oplus\mo_{(n'_{i_0}+1)C}(-G)\ar[r]&\mo_{C}(-1)\ar[r]&0.}
\end{equation} 
The morphism $f^{\underline{t}}$ restricted on $\mo_{n'_iC}(-G)$ is an isomorphism to its image for $i\neq i_0.$  The image of $\mo_{n'_{i_0}C}(-G)$ via $f^{\underline{t}}$ is contained in $\bigoplus_{i\leq i_0}\mo_{n'_iC}(-G)\oplus\mo_{(n'_{i_0}+1)C}(-G).$  Let $f^{\underline{t}}|_{\mo_{n'_{i_0}C}(-G)}=(g_1,\ldots,g_{i_0}).$  Since $n'_i\leq n'_{i_0}$ for all $i<i_0$,  $n'_iC$ can be viewed as a subscheme of $n'_{i_0}C$.  For $i<i_0$ and $t_i\neq0$,  $g_i:\mo_{n'_{i_0}C}(-G)\ra\mo_{n'_{i}C}(-G)$ is the restriction (up to scalar) of $\mo_{n'_{i_0}C}(-G)$ to $n'_iC$.  $g_i=0$ if $t_i=0.$  Finally $g_{i_0}$ is the usual inclusion (up to scalar) of $\mo_{n'_{i_0}C}(-G)$ into $\mo_{(n'_{i_0}+1)C}(-G)$.

Hence we have that $\mk$ in (\ref{febuth}) must have the form $\bigoplus_{i}\mo_{n_{i}C}(-G)$  and thus the lemma.  
\end{proof}
\subsection*{C.}
In this subsection we let $X$ be a smooth complex projective surface.  We have the good quotient $\phi:\Omega\ra M(u)$ from the Quot-scheme $\Omega$ to the moduli space $M(u)$ of semistable sheaves of class $u=(0,L,\chi(u)=n)$ with $L$ some effective line bundle on $X$.   Moreover assume $L'.K<0,\forall 0<L'\leq L$ with $K$ the canonical divisor on $X.$  
Then there is a natural morphism $\pi:\Omega\ra\ls.$

Denote $\Omega^{sm}$ to be the open subscheme in $\Omega$ consisting of quotients that are locally free of rank $1$ on their supports.  We have a universal sheaf on $X\times\Omega^{sm}.$  We denote it as $\mathcal{E}$.  Then we have 
\begin{equation}\label{ppq}
\xymatrix{
  \mathcal{E}  \ar[r]
                & X\times \Omega^{sm} \ar[ld]^{q} \ar[d]^{p}  \\
                X & \Omega^{sm}  \ar[d]^{\pi^{sm}}         \\ 
                &\ls }
\end{equation}
$\pi^{sm}$ is smooth by Proposition 2.9 in \cite{lee}.
We have the following proposition.
\begin{prop}\label{qutang}On $\Omega^{sm},$  we have \[det~R^{\bullet}(p\circ\mathcal{H}om(\mathcal{E},\mathcal{E}))\simeq(\pi^{sm})^{*}\mo_{\ls}(-1)^{\otimes L.K}.\]
\end{prop}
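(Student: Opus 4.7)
The plan is to compute $[R^{\bullet}(p \circ \mathcal{H}om(\me, \me))]$ in the Grothendieck group of $\Omega^{sm}$ via a two-term resolution of $\me$ coming from its local structure as a line bundle on its support, then push down to $\ls$ and extract the determinant. The key input is that on $\Omega^{sm}$, by definition, the universal sheaf $\me$ is locally free of rank one on its schematic support, which is itself the pullback of the universal curve $\mc \subset X \times \ls$ along $\mathrm{id}_X \times \pi^{sm}$. Setting $\tilde{\mc} := (\mathrm{id}_X \times \pi^{sm})^{-1}(\mc)$ and letting $i : \tilde{\mc} \hookrightarrow X \times \Omega^{sm}$ denote the inclusion, we have $\me \simeq i_* \mathcal{L}$ for some line bundle $\mathcal{L}$ on $\tilde{\mc}$.

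First I would extend $\mathcal{L}$ Zariski-locally to a line bundle $\tilde{\mathcal{L}}$ on $X \times \Omega^{sm}$; since $\tilde{\mc}$ is a Cartier divisor with $\mo(\tilde{\mc}) \simeq q^*\mo_X(L) \otimes p^*(\pi^{sm})^*\mo_{\ls}(1)$, this gives a Koszul-type resolution $0 \to \tilde{\mathcal{L}}(-\tilde{\mc}) \to \tilde{\mathcal{L}} \to \me \to 0$. Applying $R\mathcal{H}om(-,\me)$ and then inserting the same resolution in the second variable, the twists by $\tilde{\mathcal{L}}$ cancel out, yielding the K-theoretic identity
\[ [R\mathcal{H}om(\me,\me)] = 2[\mo] - [\mo(\tilde{\mc})] - [\mo(-\tilde{\mc})] \]
in $K(X \times \Omega^{sm})$, which is manifestly independent of the choice of extension and hence patches to a global identity.

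Next I would push forward via $p: X \times \Omega^{sm} \to \Omega^{sm}$. Since $\pi^{sm}$ is smooth by Le Potier's result (Proposition $2.9$ in \cite{lee}), the map $\mathrm{id}_X \times \pi^{sm}$ is flat, and flat base change identifies $[p_! \mo(\pm \tilde{\mc})]$ with the pullback via $\pi^{sm}$ of the corresponding K-theory classes on $\ls$. Using the defining sequence of $\mc$ on $X \times \ls$ (the sequence labeled (\ref{dfuc}) in the paper) together with the projection formula, a direct computation gives
\[ [(p_{\ls})_! \, \mo_{X \times \ls}(\pm \mc)] = \chi(\mo_X(\pm L)) \cdot [\mo_{\ls}(\pm 1)], \]
while the $[\mo]$ contribution is just a virtual trivial bundle of rank $\chi(\mo_X)$ with trivial determinant.

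Finally I would take determinants. The $[\mo]$ term drops out, and combining the $\mo_{\ls}(\pm 1)$ contributions one obtains $(\pi^{sm})^* \mo_{\ls}(m)$ for an integer $m$ assembled from $\chi(\mo_X(\pm L))$. Riemann--Roch on the surface $X$ gives $\chi(\mo_X(L)) - \chi(\mo_X(-L)) = -L \cdot K$, and combining signs produces the exponent $L \cdot K$ on $\mo_{\ls}(-1)$ as claimed. The principal technical obstacle is the global coherence of the argument, since $\mathcal{L}$ need not extend to a line bundle on the whole ambient $X \times \Omega^{sm}$; the resolution of this is that the final K-theoretic identity for $[R\mathcal{H}om(\me, \me)]$ is sheaf-local and agrees on overlaps (as it is independent of the chosen extension), or alternatively one can bypass the extension entirely by computing the Chern character of $i_* \mathcal{L}$ directly via Grothendieck--Riemann--Roch and taking determinants at the end.
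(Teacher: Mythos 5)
Your overall architecture coincides with the paper's: both proofs reduce everything to the $K$-theoretic identity $[R^{\bullet}\mathcal{H}om(\me,\me)]=2[\mo]-[\mo(\tilde{\mc})]-[\mo(-\tilde{\mc})]$ on $X\times\Omega^{sm}$, push that forward along $p$ using the Koszul sequence of the pulled-back universal curve together with the projection formula, and finish by taking determinants and applying Riemann--Roch to the difference $\chi(\mo_X(L))-\chi(\mo_X(-L))$. The one place you diverge from the paper is in how the unknown line bundle $\mathcal{L}$ on $\tilde{\mc}$ is made to disappear, and that is also the one place where your argument as written has a genuine gap. You extend $\mathcal{L}$ only Zariski-locally, derive the identity locally, and then assert that it ``patches'' because it is independent of the chosen extension. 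Independence of choices is not sufficient: classes in $K$-theory are not sections of a sheaf, and two globally defined classes that agree on each member of an open cover need not agree globally (locally every line bundle has the class of $\mo$). So the local computation by itself does not establish the displayed global identity for $[R^{\bullet}\mathcal{H}om(\me,\me)]$.

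There are two clean repairs, and the paper uses one of them. The paper applies coherent duality for the closed immersion $i:\mc^{\Omega}\hookrightarrow X\times\Omega^{sm}$ to rewrite $[R^{\bullet}\mathcal{H}om(i_{*}\me,i_{*}\me)]$ as the pushforward of $[R^{\bullet}\mathcal{H}om_{\mc^{\Omega}}(\mo_{\mc^{\Omega}}\otimes^{L}\me,\me)]$; since $\me$ is a line bundle on $\mc^{\Omega}$ it cancels there globally and canonically, reducing the problem to $[R^{\bullet}\mathcal{H}om(i_{*}\mo_{\mc^{\Omega}},i_{*}\mo_{\mc^{\Omega}})]$, which is then computed from the single global Koszul resolution of $\mo_{\mc^{\Omega}}$. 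Alternatively, you can avoid choosing any extension by computing the $\mathcal{E}xt$ sheaves themselves, which are globally defined objects: for a line bundle on a Cartier divisor in a smooth ambient space one has canonical isomorphisms $\mathcal{E}xt^{0}(\me,\me)\simeq\mo_{\tilde{\mc}}$ and $\mathcal{E}xt^{1}(\me,\me)\simeq\mo(\tilde{\mc})|_{\tilde{\mc}}$, with the higher sheaves vanishing; substituting $[\mo_{\tilde{\mc}}]=[\mo]-[\mo(-\tilde{\mc})]$ into the alternating sum gives exactly the identity you want, globally. Your Grothendieck--Riemann--Roch fallback would also work. With any of these substituted for the patching step, the remainder of your computation (flat base change along $\mathrm{id}_X\times\pi^{sm}$, projection formula, determinants, Riemann--Roch on $X$) is exactly what the paper does.
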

\begin{proof}First notice that
\begin{equation}\label{stepone}det~[R^{\bullet}(p\circ \mathcal{H}om(\me,\me))]=det~[R^{\bullet}p\circ R^{\bullet}\mathcal{H}om(\me,\me)].\end{equation}

We have a Cartesian diagram
\begin{equation}\label{impt}
\xymatrix@C=0.5cm{\mathcal{C}^{\Omega}\ar[rr]^i\ar[d]_{\pi_{\mathcal{C}}}&&X\times \Omega^{sm}\ar[rr]^p\ar[d]&&\Omega^{sm}\ar[d]^{\pi^{sm}}\\
\mc\ar[rr]_{i_1}&&X \times\ls\ar[rr]_{p_1}&&\ls.}
\end{equation}
where $\mc$ is the universal family of curves in $\ls.$

We know that $\pi^{sm}$ is smooth and hence $\pi^{\mc}$ is smooth.  And $\mc$ is smooth in $X\times\ls$,  hence $\mc^{\Omega}$ is smooth in $X\times\Omega^{sm}.$  The universal sheaf $\me$ is supported at $\mc^{\Omega}$ and it is locally free on every fiber of $p\circ i.$  On the other hand,  since $p_1\circ i_1$ is flat,  $p\circ i$ is flat and hence $\me$ is locally free on $\mc^{\Omega}$.  Now let us view $\me$ as a locally free sheaf on $\mc^{\Omega}.$  
Since $i$ in (\ref{impt}) is a closed embedding,  for $\mf$ any coherent sheaf on $X\times\Omega^{sm},$  by coherent duality we have
\[[R^{\bullet}\mathcal{H}om_{X\times\Omega^{sm}}(\mathcal{F},i_{*}\mathcal{E})]=[i_{*}R^{\bullet}\mathcal{H}om_{\mc^{\Omega}}(\mo_{\mc^{\Omega}}\otimes^{L}\mathcal{F},\mathcal{E})],\]
where $\otimes^L$ means the flat tensor as $\mo_{X\times\Omega^{sm}}$-modules.

Since $\me$ is locally free on $\mc^{\Omega},$  we have in $\hk(\mc^{\Omega})$
\[[R^{\bullet}\mathcal{H}om_{\mc^{\Omega}}(\mo_{\mc^{\Omega}}\otimes^L\mathcal{E},\mathcal{E}))]=[R^{\bullet}\mathcal{H}om_{\mc^{\Omega}}(\mo_{\mc^{\Omega}}\otimes^L\mo_{\mc^{\Omega}},\mo_{\mc^{\Omega}}))].\]
And hence
\begin{eqnarray}\label{steptwo}[R^{\bullet}\mathcal{H}om_{X\times\Omega^{sm}}(i_{*}\mathcal{E},i_{*}\mathcal{E}))]&=&[i_{*}R^{\bullet}\mathcal{H}om_{\mc^{\Omega}}(\mo_{\mc^{\Omega}}\otimes^L\mo_{\mc^{\Omega}},\mo_{\mc^{\Omega}}))]\nonumber\\&=&[R^{\bullet}\mathcal{H}om_{X\times\Omega^{sm}}(i_{*}\mo_{\mc^{\Omega}},i_{*}\mo_{\mc^{\Omega}}))].\end{eqnarray}
We have exact sequence on $X\times\Omega^{sm}$
\[0\ra q^{*}\mo_{X}(-L)\otimes p^{*}(\pi^{sm})^{*}\mo_{\ls}(-1)\ra\mo_{X\times\Omega^{sm}}\ra\mo_{\mc^{\Omega}}\ra0.\]
Hence
\begin{eqnarray}&&[R^{\bullet}\mathcal{H}om_{X\times\Omega^{sm}}(i_{*}\mo_{\mc^{\Omega}},i_{*}\mo_{\mc^{\Omega}}))]\nonumber\\ &=&[\mathcal{H}om_{X\times\Omega^{sm}}(\mo_{X\times\Omega^{sm}},\mo_{X\times\Omega^{sm}})]\nonumber\\ &+&[\mathcal{H}om_{X\times\Omega^{sm}}(q^{*}\mo_{X}(-L)\otimes p^{*}(\pi^{sm})^{*}\mo_{\ls}(-1),q^{*}\mo_{X}(-L)\otimes p^{*}(\pi^{sm})^{*}\mo_{\ls}(-1))]\nonumber\\
&-&[\mathcal{H}om_{X\times\Omega^{sm}}(\mo_{X\times\Omega^{sm}},q^{*}\mo_{X}(-L)\otimes p^{*}(\pi^{sm})^{*}\mo_{\ls}(-1))]\nonumber\\ &-&[\mathcal{H}om_{X\times\Omega^{sm}}(q^{*}\mo_{X}(-L)\otimes p^{*}(\pi^{sm})^{*}\mo_{\ls}(-1),\mo_{X\times\Omega^{sm}})]\nonumber\end{eqnarray}

Hence we know that 
\begin{eqnarray}\label{stepthree}[R^{\bullet}\mathcal{H}om_{X\times\Omega^{sm}}(i_{*}\mo_{\mc^{\Omega}},i_{*}\mo_{\mc^{\Omega}}))]&=&2[\mo_{X\times\Omega^{sm}}]-[q^{*}\mo_{X}(-L)\otimes p^{*}(\pi^{sm})^{*}\mo_{\ls^{int}}(-1)]\nonumber\\ &-&[q^{*}\mo_{X}(L)\otimes p^{*}(\pi^{sm})^{*}\mo_{\ls^{int}}(1)]\end{eqnarray}

Put (\ref{stepone}) (\ref{steptwo}) (\ref{stepthree}) together,  we get
\begin{eqnarray}det~[R^{\bullet}(p\circ \mathcal{H}om(\me,\me))]&=&(\pi^{sm})^{*}\mo_{\ls^{int}}(-1)^{\otimes (\chi(L)-\chi(-L))}\nonumber\\ &=&(\pi^{sm})^{*}\mo_{\ls^{int}}(-1)^{\otimes L.K}\end{eqnarray}
This finishes the proof of the proposition.
\end{proof} 
\begin{flushleft}{\textbf{Acknowledgments.}}  I would like to thank Lothar G\"ottsche for his guidance and Barbara Fantechi,  Eduardo de Sequeira Esteves and Ramadas Ramakrishnan Trivandrum for many helpful discussions. 
\end{flushleft}

\end{document}